\title{On the support recovery of marginal regression}
\algnewcommand{\IIf}[1]{\State\algorithmicif\ #1\ \algorithmicthen}
\newcommand{\R}{\mathbb{R}}
\newcommand\bett{\widetilde{\beta}}
\newcommand\bets{\beta^*}
\newcommand\Bc{\mathcal B}
\DeclareMathOperator{\cov}{cov}
\newcommand\alg{\mathcal A}
\newcommand\Pc{\mathcal P}
\newcommand\Pclin{\Pc_{\text{lin}}}
\DeclareMathOperator{\mri}{MRI}
\DeclareMathOperator{\lai}{LAI}
\newcommand\minnorm[1]{|#1|_{\min}}
\newcommand\opinfnorm[1]{\mnorm{#1}_{\infty}}
\newcommand{\x}{x}
\newcommand{\pcor}{r}
\newcommand{\scor}{\widehat{r}}
\newcommand\dpw{\delta_\text{PW}}
\newcommand\St{\widetilde{S}}
\newcommand\keywords[1]{\textbf{Keywords}:#1}
\author{S. Jalil Kazemitabar, Arash A. Amini and
	Ameet Talwalkar}
\begin{document}
% \nipsfinalcopy is no longer used

\maketitle

	\begin{abstract}
		Leading methods for support recovery in high-dimensional regression, such as Lasso, have been well-studied and their limitations in the context of correlated design have been characterized with precise incoherence conditions. In this work, we present a similar treatment of selection consistency for marginal regression (MR), a computationally efficient family of methods with connections to decision trees. Selection based on marginal regression is also referred to as covariate screening or independence screening and is a popular approach in applied work, especially in ultra high-dimensional settings.
		  We identify the underlying factors---which we denote as \emph{MR incoherence}---affecting MR's support recovery performance. Our near complete characterization provides a much more nuanced and optimistic view of MR in comparison to previous works. 
		%To provide context for these results, we provide a
To ground our results, we provide a broad taxonomy of results for leading feature selection methods, relating the behavior of Lasso, OMP, SIS, and MR.  We also lay the foundation for interesting generalizations of our analysis, e.g., to non-linear feature selection methods and to more general regression frameworks such as a general additive models.
%		including  simpler and more with sparse support, the feasibility of model estimation relies upon successful covariate selection. In this paper, we provide theoretical guarantees for selection consistency when using marginal regression. Marginal regression execute an order of magnitude faster than Lasso and other regularized regression procedures. With similar selection performance, marginal regression becomes very useful for super high-dimensional problems. 
%		We prove similar small sample consistency bounds known for $\ell$-1 regularized regression. 

	\medskip
	\keywords{	Support recovery; high-dimensional regression; marginal regression; independence screeing; covariate screeing; incoherence condition}
	\end{abstract}

%!TEX root = mr-nips.tex
\section{Introduction} \label{sec:intro}
Support recovery in high-dimensional regression is a well-studied problem, and of significant practical importance, e.g., in the context of model interpretability.  Leading methods such as Lasso and other $\ell_1$-regularization  variants~\citep{hastie2015statistical} have computational complexity $O(p^2n)$ which introduces significant computational overhead for large $p$. These methods  demonstrate limitations for support recovery when correlation in the design exceeds moderate amounts, as characterized by \emph{Lasso incoherence}. % (see Definition~\ref{eq:LAI:def} for a formal statement).

We focus on an alternative approach, \emph{marginal regression} (MR), which is attractive for its algorithmic simplicity, computational efficiency, and its capability for embarrassing parallelism.  In particular, MR independently compares each covariate to the response, following a procedure that closely resembles the splitting criterion used for decision trees~\citep{Kazemitabar2017}. The greedy nature of this approach suggests that it may be subject to significantly more onerous limitations on correlation in the design. It is worth noting that selection based on marginal regression goes by many other names such covariate screening or independence screening, and is a popular approach in applied work, especially in ultra high-dimensional settings.

Our primary contribution in this paper is showing that the conditions on MR support recovery are not nearly as pessimistic as have been previously assumed. In particular, while \emph{pairwise incoherence (PWI)} is known to be a sufficient condition~\citep{donoho2001uncertainty,Donoho2003}, we show that it is overly stringent. 
%Instead, we characterize its performance via a novel condition on the covariance matrix of the variables which we call \emph{MR incoherence}.  
Our results demonstrate that the behavior of MR is much more nuanced than  what PWI predicts. We establish this claim by providing a near complete characterization of the support recovery performance of MR, revealing the role of various parameters and drawing some surprising conclusions about the strength of MR in certain situations (and its weaknesses in others). 

More precisely, we derive a condition on the covariance matrix $\Sigma \in \reals^{p \times p}$ of the variables which we call \emph{MR incoherence}. We also introduce a parameter $R \in [1,\infty]$ that controls the spread of the regression coefficients $\beta$, that is: $|\beta_i | \le R |\beta_j|$ for all $i,j \in [p]$. We show that MR incoherence is necessary and sufficient for recovery when $R \ge 2$ and sufficient when $R \in [1,2)$. Our results also hinge on the correlation among the on-support variables, namely the $\Sigma_{SS}$ block of the covariance matrix $\Sigma$---where $S$ is the support of $\beta$.
%---is also key in the performance of MR. Even when there is no correlation between on-support and off-support variables (i.e., $\Sigma_{SS^c} = 0$), there are still constraints on $\Sigma_{SS}$ if one requires MR to uniformly recover the support over a class of $\beta$ coefficients; this remains true even if we further restrict to a class as small as $\beta_j = \pm 1$ for $j \in S$.
There are several remarkable consequences of our analysis:
\begin{itemize}[leftmargin=*]
\item MR can benefit from sparsity of the covariance structure, i.e., if the on-support covariates correlate in pairs (together with a low spread of the coefficient parameters), MR has a comparable performance to that of the Lasso. More generally, when on-support covariates correlate in groups of size $r$, MR shows a substantial deviation in performance from PWI and compete closer to that of Lasso.
\item When the on-support covariates are nearly uncorrelated and the regression coefficients are not spread out, the support recovery performance of MR again approaches that of the Lasso; this fact is independent of the correlation between off-support and on-support variables.
\item MR incoherence implies \emph{restricted isometry property} (RIP), a known sufficient condition for Lasso. 
\item The uniform performance of MR is sensitive to the minimum eigenvalue of the covariance matrix, which contradicts the intuition from prior work that does not consider   uniform recovery within a class of parameters~\citep{Genovese2012}.
\end{itemize}

We owe these results to our novel approach which incorporates the spread of the regression coefficients. Prior work either neglected it as a factor~\citep{Genovese2012} or failed to separate its effect from the covariance matrix~\citep{Fan2008}, which led to rejecting the possibility of uniform recovery for MR~\citep{robins2003uniform}. Our novel approach lays the foundation for interesting generalizations of our analysis.  In Section~\ref{sec:landscape}, we leverage our results to present a broad taxonomy of the necessary and sufficient conditions for a wide range of feature selection methods, and relate the performance of marginal regression with popular feature selection methods including Lasso, (Orthogonal) Matching Pursuit, and SIS.  Moreover, as we discuss in Section~\ref{sec:discuss}, it is possible to extend our results to non-linear feature selection methods such as Dstump~\citep{Kazemitabar2017} or information gain, and to more general regression frameworks such as a general additive model under suitable regularity conditions. %We sketch these results in   but we defer a full treatment to future work. 

\paragraph{Related work.} 
%Marginal regression (MR), also known as marginal or independence screening in the statistics literature,  %and matching pursuit in signal processing literature\jkcmt{MP refers to an iterative method}, 
%has attracted attention in the literature over the years, mainly due to its simplicity and computational efficiency. 
%Our work builds on the previous attempts to characterize the conditions for MR performance. To  summarize, 
In this work we study uniform recovery by MR, as opposed to previous works which studied the average case and ``a fixed single parameter'' cases. 
%More precisely, we derive a condition on the covariance matrix $\Sigma \in \reals^{p \times p}$ of the variables which we call \emph{MR incoherence}. We also introduce a parameter $R \in [1,\infty]$ that controls the spread of $\beta$ coefficients: $|\beta_i | \le R |\beta_j|$ for all $i,j \in [p]$. We show that MR incoherence is necessary and sufficient for recovery when $R \ge 2$ and sufficient when $R \in [1,2)$. Another consequence of our results is that the correlation among the on-support variables, namely the $\Sigma_{SS}$ block of the covariance matrix $\Sigma$---where $S$ is the support of $\beta$---is also key in the performance of MR. Even when there is no correlation between on-support and off-support variables (i.e., $\Sigma_{SS^c} = 0$), there are still constraints on $\Sigma_{SS}$ if one requires MR to uniformly recover the support over a class of $\beta$ coefficients; this remains true even if we further restrict to a class as small as $\beta_j = \pm 1$ for $j \in S$. %This is known as unfaithfulness \citep{Wasserman2009}.\aaa{What is known as unfaithfulness?} 
%(cf. Example \ref{ex:pwi_mri_lasso}).
%Our results allow for an exact characterization of conditions required on $\Sigma_{SS}$ to guarantee recovery.
We precisely characterize how the spread of regression coefficients and the structure of the on-support covariance plays a role in the selection consistency of MR. The necessary and sufficient conditions we provide are new to the best of our knowledge and are more relaxed as well as clearer than earlier results.

%Chen et al (1998) or Donoho Elad (2003) or Donoho Huo (2001) [still don't know which! \aaa{@jalil please fix}] focused
% proved a version of Proposition~\ref{prop:MR:PW} for 
 That a form of PWI is sufficient for MR support recovery, in the fixed design regression setting, is colloquially known and often attributed to the work by~\cite{donoho2001uncertainty} and~\cite{Donoho2003}, although we could not find this exact result there or in the literature. 
 %(\cite{donoho2001uncertainty},~\cite{Donoho2003} studied basis pursuit which is a close relative of the Lasso not the MR; cf. Section~\ref{sec:comparison}). 
 %Our Proposition~\ref{prop:MR:PW} provides an exact statement for the sufficiency of a form of PWI for uniform recovery. We work in the random design  setting, stating results in terms of the population covariance matrix $\Sigma$. Proposition~\ref{prop:MR:PW} in our work is a by-product of our main general result,
  Theorem~\ref{thm:MR:main:res}, which gives the necessary and sufficient conditions for recovery in terms of  MR incoherence (cf.~Definition~\ref{def:MR:inc}) clearly
   shows that conditions needed for MR recovery are in general  weaker than PWI. 
   %It is worth noting that Proposition~\ref{prop:MR:PW} combined with Lemma~\ref{lem:mr:incoh:R:infinity} also makes the role of $R$ clear with respect to the PWI condition. 
   Moreover, if $R = \infty$, no amount of PWI can be tolerated by MR, countering the colloquial knowledge.  
 
% on the fixed design regression and provided sufficient conditions for recovery in terms of a bound on the  pairwise incoherence (PWI). In contrast, we work with a  random design (stating results in terms of the population covariance matrix $\Sigma$) and Proposition~\ref{prop:MR:PW} in our work is a by-product of our main general result, namely Theorem~\ref{thm:MR:main:res}, which gives the necessary and sufficient conditions for recovery in terms of  MR incoherence (cf.~Definition~\ref{def:MR:inc}). This clearly shows that conditions needed for MR recovery are weaker than PWI.%Moreover, the way we form it provides more insight by connecting pairwise incoherence, to the MR incoherence , which then implies the selection consistency of marginal regression.

\cite{Genovese2012} showed that uniform recovery by MR is not possible when $R$ is unbounded (except in the trivial case of $\Sigma = I$). This is the content of Lemma~\ref{lem:mr:incoh:R:infinity} in our work. Due to this negative result, the bulk of the work in~\cite{Genovese2012} focused on average case recovery (i.e., putting a sparse prior on $\beta$, the regression coefficient vector, and recovering the support with high probability).  In contrast, we show that it is possible to recover uniformly over a class of coefficients, assuming $R < \infty$.
% That the coefficient spread $R$ plays a critical role is one of our contributions. Another insight from our results is that the covariance structure of the features on the support (i.e., $\Sigma_{SS}$) plays a key role. In addition, \cite{Genovese2012} provide some cases of recovering specific deterministic examples of parameters which are corroborated by our general theory (cf. the examples and lemmas in Section~\ref{sec:MR:population}). Countering the intuition stated in~\cite{Genovese2012} regarding the  fixed parameter regime---stating that the advantage of MR is that the conditions do not require invertibility of $\Sigma_{SS}$ and hence are less sensitive to small eigenvalues---our results for $R < \infty$ show that in the uniform recovery regime, MR is quite sensitive to small eigenvalues of $\Sigma_{SS}$ (cf.~Proposition~\ref{prop:small:eigenval}).

% \aaa{ Is there any insight from the following paragraph that does not appear above? We don't seem to have a definite result regarding minimum eigenvalues of $\Sigma_{SS}$... do we?}
% with their insights for fixed parameter setup, we show, however, as far as uniform recovery is considered, marginal regression is sensitive to low eigenvalues and invertibility of $\Sigma_{SS}$. [Lemma 8 in Older text. Do we want to remove that part? \aaa{your Lemma~8 was not complete? do you have a complete form of it? Otherwise, I don't think we can include it}] They also prove results about random parameter setup and partial recovery, which we do not  consider here.

\cite{Fan2008} is among the earliest and most noted work on MR. They termed the approach \emph{sure independence screening (SIS)} and provided sufficient asymptotic guarantees, as the sample size $n\to \infty$, for SIS to recover a superset guaranteed to contain the true support with high probability. The sufficient conditions in~\cite{Fan2008} are tangled with other assumptions on the sampling process and hence hard to compare with known incoherence-based results.  

Finally, we note that by using Pearson correlation for importance scoring, MR  can be viewed as a filter method, i.e., a method that independently scores covariates based on their relevance to the target. In contrast, a wide range of (forward) wrapper feature selection methods iteratively evaluate the importance of covariates by including them in the model one after another, with each iterative decision typically based on some importance score. Matching and orthogonal matching pursuit~\citep{donoho2001uncertainty,Donoho2003} are examples of wrappers that use Pearson correlation, just as in MR, but adjust for the interdependence of the covariates by greedily selecting the most important ones and working with the residuals. Lasso can also be implemented in a similar way, often referred to as forward stagewise regression~\citep{hastie2007forward}. % making it effectively a wrapper.
In Section~\ref{sec:landscape} we leverage our novel results to connect MR with these wrapper methods, and more broadly to provide a taxonomy of existing and conjectured results, as well as open questions.  As part of this discussion, we also relate MR to Sure Independence Screening~\citep{Fan2008}, a method closely related to MR but  solving an easier problem.

\paragraph{Amendment to related work.} 
After writing this paper, we noticed that an earlier work~\citep{wang2015consistency} provides results comparable to what we have achieved. The authors in that paper define an incoherence condition, named restricted diagonally dominant (RDD) condition, that is close to our main condition (Definition~\ref{def:MR:inc}) in form. They show that RDD is a tight guarantee for uniform recovery using marginal regression. Our work  deviates from theirs in that they consider \emph{signed support recovery (SSR)} whereas we consider just the \emph{support recovery (SR)}, i.e., we do not require the sign of coefficients to be recovered correctly. In this sense, our work is complementary to~\cite{wang2015consistency}. %Their condition RDD  might look the same as MRI, but in fact it is not. 
 Their condition (RDD) is strictly stronger than MRI (that is, RDD $\implies$ MRI) and the extra strength maps to the ``sign recovery'' part of the problem.
 % In other words, they show that RDD is necessary and sufficient to recover both the support and the signs correctly, while we are able to relax the necessary conditions for the more general SR problem.

Sign recovery is often assumed as a technical device since it greatly simplify the analysis. Here, we directly derive necessary and sufficient conditions for SR without any sign requirement leading to the MRI conditions. Considering the work of~\cite{wang2015consistency} and ours together reveals an interesting point: Dropping the sign requirement changes the nature of the problem; while RDD is necessary and sufficient for SSR for the whole range of $R > 0$ (see~\eqref{eq:Gamma:S:def} for the definition of $R$), MRI is only so for $R \in [2,\infty)$; for $R \in (1,2)$ the necessary and sufficient condition for SR will be combinatorial. That is, there is a dichotomy in the SR problem which is not in SSR. This requires  non-elementary arguments in our case as opposed to the short analysis of~\cite{wang2015consistency} (e.g., the sufficiency proof in~\cite{wang2015consistency} does not go through if RDD is replaced with MRI.) We believe our technical contributions here will be of interest since as far as we know, no other necessary and sufficient condition of the SR type is available (even for the Lasso).

\paragraph{Notation.} For any vector $\beta \in \reals^p$, let us write $ \minnorm{\beta} = \min_{j \in [p]} |\beta_j|$. For $S \subset [p]$, $\beta_S = (\beta_i, i\in S)$ is the subvector of $\beta$ on indices $S$, and hence $\minnorm{\beta_S} = \min_{j \in S} |\beta_j|$. Similar notation is used for sub-blocks of matrices, e.g., $\Sigma_{SS^c}$ is the block of $\Sigma$ indexed by rows $S$ and columns $S^c$. We write $\mnorm{A}_p$ for the $\ell_p$ operator norm of a matrix. For example, $\mnorm{A}_\infty$ is the  $\ell_\infty$ operator norm which is equal to the maximum absolute row sum. Similarly, $\opnorm{A}$ is the usual $\ell_2$ operator norm. We use $A_{i*}$ and $A_{*j}$ to denote the $i$th row and the $j$th column of $A$, respectively. The symbols $\lesssim$ and $\gtrsim$ are used to denote inequalities up to constants. %In other words, $\mnorm{A} = \max_i \norm{A_{i*}}_1$ where $A_{i*}$ is the $i$th row of $A$.

\section{Selection by marginal regression}
%Support recovery using marginal regression (MR) is prone to error even at the population level (i.e., with infinite sample size).
 In this section, we first formalize the exact support recovery problem and state mutual incoherence conditions for MR to recover the support uniformly over a controlled class of parameters. We study the problem first at the population level, and then extend the results to the finite sample regime. %\ref{assu-incoherence}. We achieve this goal in two steps. First, we assume the Pearson correlations are readily available without contamination, or in other terms,
%The support recovery using marginal regression (MR), hereby referred to as MR, is prone to error even with meta-knowledge about the probability model. In this section, we formalize the exact support recovery problem and discover equivalent or sufficient conditions for it similar to mutual incoherence, as in \ref{assu-incoherence}. We achieve this goal in two steps. First, we assume the Pearson correlations are readily available without contamination, or in other terms, the population joint distribution of $(X,Y)$ is known. This allows us to derive a sufficient condition for the population covariance. Second, we study a finite sample and prove a performance bound that guarantees the success of marginal regression in the small sample setting.

%!TEX root = mr-nips.tex
\subsection{Marginal regression at the population level}
\label{sec:MR:population}

At the population level, a  random design linear model with response $Y \in \reals$,  covariate (or feature) vector $X = (X_1,\dots,X_p) \in \reals^p$ and noise  $\eps \in \reals$, is of the form
\begin{align}\label{eq:lin:model:pop}
	\begin{split}
		Y = \beta^T X + \eps, \quad 
		\text{where}
		\quad \ex(\eps) &= 0,\quad \var(\eps) = \sigma^2 ,\\
		\quad \ex(X) &= 0_p,  \quad \cov(X) = \Sigma, \quad \Sigma_{ii} = 1, \forall i \in [p], \\
		\quad \cov(X,\eps) &= 0_p
	\end{split}
\end{align}
and in addition we assume $\ex(X) = 0$ and $\cov(X,\eps) = 0$, i.e., the noise and the covariate vector are uncorrelated. Note that we are working with a random design model, i.e., $X$ is a random variable. The covariance matrix of $X$, namely, $\Sigma$ will play the prominent role in the support recovery conditions presented here. The diagonal scaling $\Sigma_{ii} = 1$ is natural for studying a correlation based approach such as marginal regression and it is inline with common practice of standardizing covariates before performing regression.
%since it is sensitive to correlation between $Y$ and $X$ rather than the covariance. Moreover, without the scaling there is a trade-off between the spread of $\beta$ and the diagonal of $\Sigma$ via performing a rescaling of both. In order to study the effect of the spread of $\beta$, we have to remove the diagonal spread in $\Sigma$. \aaa{@jalil, please check to see if you agree, otherwise modify as you see fit.}

%\aaa{@jalil: please check this remark:} \aaa{The unit diagonal assumption $\Sigma_{ii} = 1$ allows us to make the role of the spread of $\beta$ clear, otherwise there is a trade-off with the spread of the diagonal of $\Sigma$ via rescaling. Our assumption here matches the common practice of standardizing the variables before performing regression.}  %\aaa{Talk about the diagonal assumption and that it is not restrictive.}

Let us fix a subset $S \subset [p]$ with $|S|= s$, which will serve as the true support of $\beta$ to be recovered. We will assume that $s$ is known and available to the algorithms. The class of parameters of interest  in this paper is:
\begin{align}\label{eq:Gamma:S:def}
\Gamma_S := \Gamma_{S,\rho,R} := \big\{\beta \in \R^p \;|\; \beta_{S^c}= 0, \;\;
\minnorm{\beta_S} > \rho, \;\; \|\beta_S\|_\infty \leq R \minnorm{\beta_S}\big\} 
\end{align}
where $R \in [1,\infty]$.
Note that the support of any $\beta \in \Gamma_S$ is contained in $S$. The parameters $\rho$ and $R$ control, respectively, the so-called minimum signal strength and the spread of the on-support parameters. The two extremes $R=1$ versus $R=\infty$ correspond to equal magnitude for the on-support elements versus no restriction on the relative sizes of $\beta_j,j\in S$. In other words, for $R=1$, $\beta_S$ is a scaled multiple of a sign vector: $\beta_S \in \bigcup_{ t \ge \rho} \{-t,t\}^s $.

The (worst-case) support recovery problem over $\Gamma_S$ can be stated as follows: Given that $(Y,X)$ follows model~\eqref{eq:lin:model:pop} with $\beta \in \Gamma_S$, can we recover the support of $\beta$, i.e., the set of indices of its nonzero elements?
% or its signed support, i.e., recover the sign of the elements on the support as well? 
The guarantee of recovery should hold uniformly over $\beta \in \Gamma_S$. To make this notion more precise, let  $\Pclin^{p+1}$ be the class of distributions for $(Y,X)$ that satisfy~\eqref{eq:lin:model:pop}. We write $\pr_{\beta,\Sigma} \in \Pclin^{p+1}$ for any distribution for $(Y,X)$ that satisfies~\eqref{eq:lin:model:pop} with regression coefficient $\beta$ and feature covariance $\Sigma$.

A population level support recovery algorithm $\alg$ takes a distribution $P_{\beta,\Sigma}$ and outputs a subset of $[p]$ that is believed to be the support of $\beta$. Formally, such an algorithm is a map $\alg : \Pclin^{p+1} \to [p]$. We say that $\alg$ succeeds in support recovery uniformly over $\Gamma_S$ if
\begin{align}\label{eq:pop:supp:recovery:def}
	\alg(\pr_{\beta,\Sigma}) = \supp(\beta), \quad \forall \pr_{\beta,\Sigma} \in \Pclin^{p+1},\; \beta \in \Gamma_S.
\end{align}
If~\eqref{eq:pop:supp:recovery:def} holds, we also say that $\alg$ is model selection consistent over $\Gamma_S$ at the population level.
Ideally one would like~\eqref{eq:pop:supp:recovery:def} to hold for all nonsingular $\Sigma$ as well. However, for any particular algorithm one might need additional constraints on $\Sigma$ for~\eqref{eq:pop:supp:recovery:def} to hold. These conditions are often called \emph{incoherence conditions} as they measure various sorts of deviations of $\Sigma$ from the identity. Our goal %in this section 
is to derive incoherence conditions for the marginal regression to succeed in support recovery over $\Gamma_S$.

In anticipation of the results under sampling, we also introduce a robust version of~\eqref{eq:pop:supp:recovery:def}: $\alg$ succeeds in support recovery \emph{uniformly over $\Gamma_S$ with slack $\delta$} if
\begin{align}\label{eq:pop:supp:recovery:slack:def}
	\alg(\pr_{\beta,\Sigma'}) = \supp(\beta), \quad 
		\forall \pr_{\beta,\Sigma'} \in \Pclin^{p+1},
		\;\; \beta \in \Gamma_S, 
		 \;\; \Sigma'\in \ball_\infty(\Sigma,\beta;\delta/2)
\end{align}
where $\ball_\infty(\Sigma,\beta;\delta) := \{\Sigma':\; \infnorm{(\Sigma' - \Sigma)\beta} \le \delta \}$.  %\aaa{$\delta$ scaling issue}

The population level marginal regression (MR) performs the following operation: 
 (1) Let $\pcor = (\pcor_j) := \cov(X,Y) \in \reals^p$ and sort the coordinates so that  $|\pcor_{i_1}| \ge |\pcor_{i_2}| \ge \dots \ge |\pcor_{i_p}|$.
 (2)  Output $\{i_1,\dots,i_s\}$.
%
%\begin{itemize}
%	\item Let $\pcor = (\pcor_j) := \cov(X,Y) \in \reals^p$ and sort the coordinates so that  $|\pcor_{i_1}| \ge |\pcor_{i_2}| \ge \dots \ge |\pcor_{i_p}|$.
%	\item Output $\{i_1,\dots,i_s\}$.
%\end{itemize}
The key condition controlling the behavior of population MR is the following:
\begin{defn}[MR incoherence]\label{def:MR:inc}
    A covariance matrix $\Sigma$ satisfies MR incoherence with parameters $R$ and slack $\delta'$ relative to subset $S$, denoted as $\Sigma \in \mri_S(\delta';R)$, if
	\begin{align}
	 \frac{R}{1+R} \|\Sigma_{Sj} \pm \Sigma_{Sk} \|_1 + \frac{\delta'}{ (1+R)} < \Sigma_{jj}\pm \Sigma_{jk},\quad \forall j\in S,\; k\in S^c.
	\label{eq:MR:incoh}
	\end{align}
	Here, $\pm$ signs go together, that is, \eqref{eq:MR:incoh} represents two sets of inequalities.
\end{defn}
As a set of matrices, $\mri_S(\delta';R)$ is decreasing in both its argument $\delta'$ and $R$.  That is,~\eqref{eq:MR:incoh:2} becomes more restrictive as we increase $\delta'$ or $R$. It is also worth nothing that $\mri_S(\delta';R)$ defines a convex subset of the cone of positive semidefinite matrices. 
Our main result regarding support recovery performance of the population MR  is the following:
\begin{thm}[Population MR consistency]\label{thm:MR:main:res} 
	Under  linear model~\eqref{eq:lin:model:pop}, assume that 
	\begin{align}\label{eq:MR:incoh:2}
		\Sigma \in \mri_S\Big( \frac{\delta}{\rho} ; R\Big)
	\end{align}
	for some $\rho, \delta > 0$ and $S \subset [p]$. Then, the following holds:
	\begin{enumerate}%[label=(\alph*)]
		\item[(a)] For any $R \in [1,\infty]$, the MR incoherence condition~\eqref{eq:MR:incoh:2} is sufficient
		for the population MR to recover the support uniformly over $\Gamma_{S,\rho,R}$ with slack $\delta$ in the sense of~\eqref{eq:pop:supp:recovery:slack:def}.
		%\item For $\Sigma_{SS}=I$, condition~\eqref{eq:MR:incoh:2} is also necessary.
		\item[(b)] When $R \in [2,\infty]$ or  $\Sigma_{SS}=I$, condition~\eqref{eq:MR:incoh:2} is also necessary.
	\end{enumerate}
\end{thm}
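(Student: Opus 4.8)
The plan is to use that population MR depends on $(\beta,\Sigma)$ only through $r := \cov(X,Y) = \Sigma\beta$ (since $\cov(X,\eps)=0$), and succeeds precisely when $S$ indexes the $s$ largest of $(|r_i|)_{i\in[p]}$. For part (a) I would reduce the claim to showing that
\[
 |r_j| > |r_k| + \delta \qquad \text{for all } \beta\in\Gamma_S,\ j\in S,\ k\in S^c
\]
implies recovery over $\ball_\infty(\Sigma,\beta;\delta/2)$; this direction is immediate, since a perturbation $\Sigma'$ changes $r$ by a vector of $\ell_\infty$-norm at most $\delta/2$, so $|r'_j|-|r'_k|\ge |r_j|-|r_k|-\delta>0$. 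For part (b) I would instead produce an explicit pair $(\beta,\Sigma')$ violating recovery. In both parts the homogeneity $\beta_S\mapsto t\beta_S$ and the symmetry $\beta\mapsto-\beta$ preserve $\Gamma_S$ and every $|r_i|$, so it suffices to treat $\minnorm{\beta_S}$ arbitrarily close to $\rho$ and $\beta_j>0$.

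The workhorse for (a) is the identity, valid for $j\in S$, $k\in S^c$ and either sign,
\[
 r_j\pm r_k \;=\; (1\pm\Sigma_{jk})\,\beta_j \;+\; \sum_{i\in S\setminus j}(\Sigma_{ij}\pm\Sigma_{ik})\,\beta_i,
\]
together with the observation that \eqref{eq:MR:incoh:2} is equivalent to $R\sum_{i\in S\setminus j}|\Sigma_{ij}\pm\Sigma_{ik}| + \delta/\rho < 1\pm\Sigma_{jk}$ for both signs (isolate the $i=j$ term in $\|\Sigma_{Sj}\pm\Sigma_{Sk}\|_1$, use $|\Sigma_{jj}\pm\Sigma_{jk}|=1\pm\Sigma_{jk}$, and simplify). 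Since $|\beta_i|\le R\minnorm{\beta_S}$ and $\beta_j\ge\minnorm{\beta_S}$, the cross sum is at most $R\minnorm{\beta_S}\sum_{i\ne j}|\Sigma_{ij}\pm\Sigma_{ik}|$, which MR incoherence makes strictly smaller than $(1\pm\Sigma_{jk})\beta_j$; more precisely it yields $r_j\pm r_k \ge \minnorm{\beta_S}\bigl[(1\pm\Sigma_{jk})-R\sum_{i\ne j}|\Sigma_{ij}\pm\Sigma_{ik}|\bigr] > \minnorm{\beta_S}\cdot(\delta/\rho)>\delta$ when $\beta_j>0$. Hence $r_j+r_k>\delta$ and $r_j-r_k>\delta$, which force $r_j>|r_k|+\delta$, i.e.\ $|r_j|>|r_k|+\delta$ for every $\beta\in\Gamma_S$; the reduction above then gives (a).

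For (b) I would argue the contrapositive: suppose \eqref{eq:MR:incoh:2} fails for some $j\in S$, $k\in S^c$ and sign $\epsilon$. Take $|\beta_j|$ just above $\rho$, $|\beta_i|=R\minnorm{\beta_S}$ for $i\in S\setminus j$, and the signs of $\beta_i$ chosen (via the triangle inequality) to make $r_j+\epsilon r_k$ minimal, equal to $\minnorm{\beta_S}\bigl[(1+\epsilon\Sigma_{jk})-R\sum_{i\ne j}|\Sigma_{ij}+\epsilon\Sigma_{ik}|\bigr]$, which failure of \eqref{eq:MR:incoh:2} makes $\le\delta$. A suitable small symmetric perturbation $\Sigma'\in\ball_\infty(\Sigma,\beta;\delta/2)$, chosen so that $w:=(\Sigma'-\Sigma)\beta$ has $w_j=-\delta/2$ and $w_k=-\epsilon\delta/2$, then gives $r'_j+\epsilon r'_k\le 0$ while leaving $r'_j-\epsilon r'_k=r_j-\epsilon r_k$ unchanged (as $w_j-\epsilon w_k=0$). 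If \eqref{eq:MR:incoh:2} holds for the opposite sign, the estimate from (a) gives $r_j-\epsilon r_k>0$, so $(r'_j)^2-(r'_k)^2=(r'_j+\epsilon r'_k)(r'_j-\epsilon r'_k)\le 0$, i.e.\ $|r'_k|\ge|r'_j|$ (strict if \eqref{eq:MR:incoh:2} fails strictly), and the top-$s$ scores no longer single out $S$. This is also why $\Sigma_{SS}=I$ works for every $R$: then $r_j=\beta_j$ is independent of $\{\beta_i\}_{i\ne j}$, the two sign-combinations decouple, and the single-sign construction always applies.

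The main obstacle is the remaining case of (b) where \eqref{eq:MR:incoh:2} fails for \emph{both} signs at the pair $(j,k)$ — possible when the on-support correlations around $j$ are large. Then the one-sign construction need not force $|r'_k|\ge|r'_j|$, and one must instead optimize jointly over the $2^{|S|-1}$ sign patterns of $\beta_{S\setminus j}$ (and, I expect, also over $|\beta_j|\in[\rho,\,R\minnorm{\beta_S}]$) so that $r_j+r_k$ and $r_j-r_k$ acquire opposite signs. I anticipate this reduces to inspecting the extreme points of the relevant slice of $\Gamma_S$ — each $|\beta_i|$ at the floor $\rho$ or at $R\minnorm{\beta_S}$, at least one at the floor — and then showing, using $R\ge 2$, that the extremal bad configuration may be taken with $j$ alone at the floor, which is exactly the configuration that produces \eqref{eq:MR:incoh}. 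For $R<2$ this reduction genuinely fails and the sharp condition becomes combinatorial (the dichotomy noted in the introduction), so it is precisely this step that must consume the hypothesis $R\in[2,\infty]$.
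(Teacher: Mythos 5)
Your part (a) is correct and is essentially a streamlined version of the paper's sufficiency argument: the paper arrives at the same two sign inequalities for $r_j\pm r_k$ by passing through the absolute dual $\Gamma_S^\dagger$ (Lemmas~\ref{lem:a:b:lambda}, \ref{lem:Gamma:S:identify} and~\ref{lem:infnorm:equality}), machinery it builds mainly because it is needed for necessity, whereas your direct bound $r_j\pm r_k\ge \minnorm{\beta_S}\bigl[(1\pm\Sigma_{jk})-R\sum_{i\in S\setminus\{j\}}|\Sigma_{ij}\pm\Sigma_{ik}|\bigr]>\delta$, together with the immediate $\delta/2$-perturbation step, gives the slack-$\delta$ statement with less overhead.

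Part (b), however, has a genuine gap, and it sits exactly where you flag your ``main obstacle''. For general $\Sigma_{SS}$ and $R\in[2,\infty]$ one must rule out the cancellation scenario in which \eqref{eq:MR:incoh} fails for both signs at a pair $(j,k)$, and you offer only an anticipated reduction to extreme points of the slice (each $|\beta_i|\in\{\rho,R\rho\}$). That reduction is not sound: $\beta\mapsto|r_j|-|r_k|$ is a difference of piecewise-linear convex functions, so its minimum over the parameter slice need not occur at a vertex, and the recovery-breaking witnesses really do require interpolated magnitudes --- see the coordinates $\|\phi_S\|_\infty/(\|\phi_S\|_1-\|\phi_S\|_\infty)$ in the vectors $\beta^2,\beta^3$ of Lemma~\ref{lem:Gamma:S:identify} and the interior $\gamma\in[1,R]$ in the $\beta^{(\gamma)}$ construction of Lemma~\ref{lem:alpha:R:bound}. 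Those two lemmas (the dual characterization $\Gamma_S^\dagger\subseteq\Gamma_S'\cup\Gamma_S''$ and the bound $\alpha_R(\phi)\le\max\{0,(2-R)\alpha(\phi)\}$, which force $\Gamma_S^\dagger=\Gamma_S'$ precisely when $R\ge2$) are the technical core of the theorem and are absent from your proposal. Even granting that collapse, a further step is missing: membership in $\Gamma_S'$ controls $\infnorm{\Sigma_{Sj}\pm\Sigma_{Sk}}$, not the entry $\Sigma_{jj}\pm\Sigma_{jk}$ appearing in \eqref{eq:MR:incoh}; the paper closes this with Lemma~\ref{lem:infnorm:equality}, which exploits the whole family $\lambda\in[-1,1]$ rather than just $\lambda=\pm1$, and nothing in your two-sign setup supplies it. Minor additional points: in your one-sign construction, failure of \eqref{eq:MR:incoh:2} only yields $r_j+\epsilon r_k\le \minnorm{\beta_S}\,\delta/\rho$, which exceeds $\delta$ since $\minnorm{\beta_S}>\rho$, so the claimed $r'_j+\epsilon r'_k\le 0$ needs a limiting argument as $\minnorm{\beta_S}\downarrow\rho$; and the $\Sigma_{SS}=I$ case, correctly sketched, still needs the equivalence of Lemma~\ref{lem:mr:incoh:SigSS:I} to land on \eqref{eq:MR:incoh:2}. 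In short, (a) stands, but (b) for $R\ge2$ is a plan rather than a proof, and its extreme-point step would fail as stated.
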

%\aaa{Need to change the proof combining the two necessary conditions.}

In the special case where $\Sigma_{SS} = I$, one has the following simpler form of~\eqref{eq:MR:incoh}:
\begin{lem}\label{lem:mr:incoh:SigSS:I}
	Assuming that $\Sigma_{SS}=I$, we have $\Sigma \in \mri_S(\delta';R)$  if and only if
	%then the model selection consistency, as stated in Theorem~\ref{thm:MR:main:res}, is achieved if
	\begin{align}\label{eq:mr:incoh:SigSS:I}
	\norm{ \Sigma_{Sk} }_1  < 
		\frac{1}{R}\Big( 1 - \delta'\Big) + \Big(1- \frac1R\Big) \minnorm{\Sigma_{Sk}},\quad \forall k\in S^c.
	%\norm{ \Sigma_{Sk} }_1  < \frac{1}{R} - \frac{\delta}{\rho R} + \frac{R-1}{R} \min_{j\in S} |\Sigma_{jk}| ,\quad \forall k\in S^c
	\end{align}
\end{lem}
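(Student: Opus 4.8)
The plan is to unfold Definition~\ref{def:MR:inc} under the hypothesis $\Sigma_{SS}=I$ and simplify; every step will be an equivalence, so both directions of the ``if and only if'' come out at once. First I would record two elementary facts: $\Sigma_{SS}=I$ forces $\Sigma_{Sj}=e_j$, the $j$-th canonical basis vector of $\reals^S$, for every $j\in S$; and, since $\Sigma$ is a correlation matrix (positive semidefinite with unit diagonal), $|\Sigma_{jk}|\le 1$ for all $j,k$. Fix $k\in S^c$ and write $v:=\Sigma_{Sk}\in\reals^S$ and $v_j:=\Sigma_{jk}$ for its $j$-th coordinate. Then the two $\ell_1$ norms in \eqref{eq:MR:incoh} split off the $j$-th coordinate as $\|e_j\pm v\|_1=|1\pm v_j|+\|v\|_1-|v_j|=(1\pm v_j)+\|v\|_1-|v_j|$, where the last equality uses $|v_j|\le 1$, while the right-hand side of \eqref{eq:MR:incoh} is $\Sigma_{jj}\pm\Sigma_{jk}=1\pm v_j$.

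Next I would substitute these expressions into \eqref{eq:MR:incoh}, multiply through by $1+R$, and cancel. In the $+$ case the terms $R$ and $Rv_j$ cancel on both sides, leaving $R\|v\|_1<1+v_j+R|v_j|-\delta'$; symmetrically the $-$ case gives $R\|v\|_1<1-v_j+R|v_j|-\delta'$. Requiring both for a fixed $j$ is equivalent to a single inequality, $R\|v\|_1<1-|v_j|+R|v_j|-\delta'=(1-\delta')+(R-1)|v_j|$, using $\min(1+v_j,1-v_j)=1-|v_j|$. Finally I would intersect over $j\in S$: because $R\ge 1$, the map $t\mapsto (1-\delta')+(R-1)t$ is nondecreasing, so the smallest (i.e.\ binding) right-hand side over $j\in S$ is attained at an index achieving $\minnorm{v}=\minnorm{\Sigma_{Sk}}$. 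Dividing by $R$ and reinstating the quantifier over $k\in S^c$ yields exactly \eqref{eq:mr:incoh:SigSS:I}.

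I would handle $R=\infty$ by the usual conventions $\frac{R}{1+R}\mapsto 1$, $\frac{1}{R}\mapsto 0$, $\frac{\delta'}{1+R}\mapsto 0$, under which the same cancellations go through (and the resulting condition $\|\Sigma_{Sk}\|_1<\minnorm{\Sigma_{Sk}}$ is vacuous, consistent with Lemma~\ref{lem:mr:incoh:R:infinity}). Since this is essentially a computation, I do not expect a genuine obstacle; the only places needing care are the sign case analysis inside $|1\pm v_j|$—hence the preliminary remark $|\Sigma_{jk}|\le1$—and the observation that imposing the $\pm$ inequalities jointly collapses them to one inequality via $\min(1+v_j,1-v_j)=1-|v_j|$. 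It is also worth a sentence that the reduction from ``for all $j\in S$'' to ``$\minnorm{\Sigma_{Sk}}$'' uses monotonicity and therefore the hypothesis $R\ge1$; were $R<1$ (outside the scope here) the extremal index would instead be the one maximizing $|v_j|$.
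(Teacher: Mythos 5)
Your argument is correct and is essentially the paper's own computation: the appendix proof runs the same algebra in the reverse direction (starting from \eqref{eq:mr:incoh:SigSS:I}, using $|\Sigma_{jk}|\ge\minnorm{\Sigma_{Sk}}$, adding $R(1\pm\Sigma_{jk})$ and invoking $\Sigma_{j'j}=1\{j'=j\}$ to reach \eqref{eq:MR:incoh}, then declaring the steps reversible), and your monotonicity remark in $|\Sigma_{jk}|$ for $R\ge1$ is exactly what justifies that reversibility, so the two proofs are the same up to direction and bookkeeping. The only caveat is your $R=\infty$ aside: under the literal conventions $\tfrac{R}{1+R}\mapsto 1$, $\tfrac1R\mapsto 0$ the resulting condition also excludes $\Sigma=I_p$, which is at slight variance with Lemma~\ref{lem:mr:incoh:R:infinity} (whose proof effectively reads $R=\infty$ as membership in $\mri_S(\delta';R)$ for all finite $R$, giving $\{I_p\}$ when $\delta'<1$); this does not affect your equivalence for finite $R\ge1$, which is what the lemma is used for.
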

%See Appendix~\ref{sec:rem:proof} for the proof.

In light of Theorem~\ref{thm:MR:main:res} and Lemma~\ref{lem:mr:incoh:SigSS:I}, when on-support variables are iid and the non-zero coefficients have zero spread ($R=1$), the Lasso and marginal regression have identical guarantees for support recovery.

 The next lemma shows that for unbounded $R$, uniform recovery is not possible by MR except in the trivial case $\Sigma = I_p$. Note that for $R=\infty$ according to Theorem~\ref{thm:MR:main:res}, MRI is both necessary and sufficient for recovery.
\begin{lem}[$R = \infty$]\label{lem:mr:incoh:R:infinity}
 $\mri_S(\delta';\infty)=\emptyset$ \,for $\delta' \ge 1$ and $=\{I_p\}$ for $\delta' \in [0,1)$. %\aaa{otherwise seems extra since you don't have the result for $\delta' < 1$?}
\end{lem}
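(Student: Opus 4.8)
The plan is to read the $R=\infty$ case directly off of~\eqref{eq:MR:incoh} by tracking what survives as $R\to\infty$; recall that $\mri_S(\delta';R)$ is decreasing in $R$, so $\mri_S(\delta';\infty)=\bigcap_{R<\infty}\mri_S(\delta';R)$. For any finite $R$, multiply~\eqref{eq:MR:incoh} by $1+R>0$ and rearrange into the equivalent form
\[
	\delta' \;<\; (\Sigma_{jj}\pm\Sigma_{jk}) \;+\; R\big[(\Sigma_{jj}\pm\Sigma_{jk}) - \|\Sigma_{Sj}\pm\Sigma_{Sk}\|_1\big], \qquad \forall\, j\in S,\ k\in S^c .
\]
The one observation driving everything is that the bracketed quantity is $\le 0$: since $j\in S$, the $j$-th coordinate of $\Sigma_{Sj}\pm\Sigma_{Sk}$ equals $\Sigma_{jj}\pm\Sigma_{jk}=1\pm\Sigma_{jk}$, which is nonnegative because $\Sigma_{jj}=1$ and $|\Sigma_{jk}|\le1$; hence $\|\Sigma_{Sj}\pm\Sigma_{Sk}\|_1\ge 1\pm\Sigma_{jk}=\Sigma_{jj}\pm\Sigma_{jk}$. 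Consequently, as $R\to\infty$ the right-hand side above stays bounded (and equals $\Sigma_{jj}\pm\Sigma_{jk}$) exactly when the bracket vanishes, and otherwise tends to $-\infty$. Therefore $\Sigma\in\mri_S(\delta';\infty)$ if and only if, for every $j\in S$, $k\in S^c$ and both sign choices, $(\mathrm{i})$ $\|\Sigma_{Sj}\pm\Sigma_{Sk}\|_1 = \Sigma_{jj}\pm\Sigma_{jk}$ and $(\mathrm{ii})$ $\delta' < \Sigma_{jj}\pm\Sigma_{jk}$.

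Next I would unpack~$(\mathrm{i})$. Equality in $\|v\|_1\ge|v_j|$ forces every coordinate of $v=\Sigma_{Sj}\pm\Sigma_{Sk}$ other than the $j$-th to vanish, i.e. $\Sigma_{ij}\pm\Sigma_{ik}=0$ for all $i\in S\setminus\{j\}$; taking both signs gives $\Sigma_{ij}=\Sigma_{ik}=0$ for all $i\in S\setminus\{j\}$. Letting $j$ range over $S$ then forces all off-diagonal entries of $\Sigma_{SS}$ to vanish, so $\Sigma_{SS}=I_s$ (recall $\Sigma_{ii}=1$); and, when $s=|S|\ge2$, it also forces $\Sigma_{ik}=0$ for all $i\in S$, $k\in S^c$, i.e. $\Sigma_{SS^c}=0$. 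In particular $\Sigma_{jk}=0$, so $(\mathrm{ii})$ collapses to $\delta'<\Sigma_{jj}=1$. (For the degenerate case $s=1$, $S=\{j_0\}$, the bracket in the displayed inequality is identically zero and the only surviving constraint is $\max_{k\in S^c}|\Sigma_{j_0 k}|<1-\delta'$; the clean ``$\{I_p\}$'' statement is the $s\ge2$ case, which is the one invoked in the paper.)

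Finally I assemble the two regimes. If $\delta'\ge1$ then $(\mathrm{ii})$ ($\delta'<1$) is impossible, so $\mri_S(\delta';\infty)=\emptyset$. If $\delta'\in[0,1)$ then the surviving requirements are exactly $\Sigma_{SS}=I_s$ and $\Sigma_{SS^c}=0$; since the block $\Sigma_{S^cS^c}$ appears neither in~\eqref{eq:MR:incoh} nor in marginal regression (which depends on $\Sigma$ only through $r=\Sigma\beta$ with $\beta_{S^c}=0$), the canonical representative of this set is $I_p$, and one checks immediately that $\Sigma=I_p$ satisfies $(\mathrm{i})$--$(\mathrm{ii})$: then $\Sigma_{Sj}\pm\Sigma_{Sk}$ is the $j$-th unit vector restricted to $S$, so both sides of $(\mathrm{i})$ equal $1$, and $(\mathrm{ii})$ reads $\delta'<1$. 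Hence $\mri_S(\delta';\infty)=\{I_p\}$.

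The main obstacle I anticipate is not computational but interpretational: pinning down what ``$R=\infty$'' should mean in~\eqref{eq:MR:incoh}. A literal substitution $\tfrac{R}{1+R}\mapsto1$, $\tfrac{\delta'}{1+R}\mapsto0$ yields the never-satisfiable strict inequality $\|\Sigma_{Sj}\pm\Sigma_{Sk}\|_1<\Sigma_{jj}\pm\Sigma_{jk}$ (empty for every $\delta'$), whereas relaxing the limiting inequality to ``$\le$'' kills the $\delta'$-dependence entirely (giving $\{I_p\}$ for all $\delta'$); the correct reading---forced by monotonicity of $\mri_S(\delta';R)$ in $R$ and by its role in Theorem~\ref{thm:MR:main:res}---is $\bigcap_{R<\infty}\mri_S(\delta';R)$, which the rearranged inequality above makes transparent. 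Everything else is routine.
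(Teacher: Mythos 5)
Your argument is correct and is essentially the paper's own proof: the paper also reads the $R=\infty$ case as membership in $\mri_S(\delta';R)$ for all finite $R$, clears the $1+R$ denominator to get $R\sum_{j'\in S\setminus\{j\}}|\Sigma_{j'j}\pm\Sigma_{j'k}|+\delta'<\Sigma_{jj}\pm\Sigma_{jk}$, and concludes that feasibility for all $R$ forces these sums to vanish (hence $\Sigma_{SS}=I$, $\Sigma_{SS^c}=0$) and $\delta'<1$. Your extra remarks on the degenerate $s=1$ case and on the unconstrained $\Sigma_{S^cS^c}$ block are caveats the paper's proof silently glosses over, and they only sharpen, not change, the argument.
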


%\begin{prop}\label{prop:singular:fail:MR}
%	If $\Sigma_{SS}$ is singular, then condition~\eqref{eq:MR:incoh} fails to hold.
%\end{prop}

The next proposition shows that if the on-support covariance matrix is close enough to singularity, then  MRI fails to hold. In other words, MR is sensitive to the smallest eigenvalue of $\Sigma_{SS}$.

%If $\Sigma_{SS}\not=I$, ... [it is not clear how \ref{assu-incoherence} is comparable to \eqref{eq:MR:incoh} Let us assume the extreme case where $\Sigma_{SS}$ is singular. Then the incoherence would fail. How about condition \eqref{eq:MR:incoh}? I want to investigate this scenario in below.

\begin{prop}\label{prop:small:eigenval}
	Let $\lambda_s^2$ be the smallest eigenvalue of $\Sigma_{SS}$, where $\lambda_s > 0$. Then there exists some $j \in S$ such that for all $k\in S^c$,
	$$
	|\Sigma_{jj} \pm \Sigma_{jk}| \leq \lambda_s \sqrt{s} + \frac{1}{2} \|\Sigma_{Sj} \pm \Sigma_{Sk}\|_1.
	$$
	As a result, if $\lambda_s \le \delta'/(\sqrt{s} (R+1))$ then $\Sigma \not \in \mri_S(\delta';R)$ for any $\delta' \ge 0$ and $R \ge 1$.
\end{prop}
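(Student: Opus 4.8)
The index $j$ should come from a bottom eigenvector of $\Sigma_{SS}$; once $j$ is fixed I want to split $\Sigma_{jj}\pm\Sigma_{jk}$ into a piece that is automatically dominated by $\tfrac12\|\Sigma_{Sj}\pm\Sigma_{Sk}\|_1$ and a ``cross'' piece that is $O(\lambda_s\sqrt s)$. Concretely, let $v\in\reals^S$ be a unit-norm eigenvector of $\Sigma_{SS}$ for the eigenvalue $\lambda_s^2$, and let $j\in S$ be an index with $|v_j|=\max_{i\in S}|v_i|$; replacing $v$ by $-v$ if needed, assume $v_j>0$, so that $v_j\ge 1/\sqrt s$ since $\|v\|_2=1$ and $|S|=s$. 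Writing $e_j\in\reals^S$ for the $j$-th coordinate vector, decompose
\[
e_j = w + \tfrac{1}{v_j}\, v,\qquad w_j=0,\quad w_i=-v_i/v_j\ (i\neq j),
\]
and note $\|w\|_\infty\le 1$ by maximality of $|v_j|$. This $j$ depends only on $\Sigma_{SS}$, not on $k$, which is what the statement requires.

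\textbf{The split and the easy term.} For any $k\in S^c$, using $(\Sigma_{Sj})_j=\Sigma_{jj}$, $(\Sigma_{Sk})_j=\Sigma_{jk}$ and the decomposition of $e_j$,
\[
\Sigma_{jj}\pm\Sigma_{jk}=e_j^\top(\Sigma_{Sj}\pm\Sigma_{Sk})=w^\top(\Sigma_{Sj}\pm\Sigma_{Sk})+\tfrac{1}{v_j}\,v^\top(\Sigma_{Sj}\pm\Sigma_{Sk}).
\]
Since $w_j=0$ and $\|w\|_\infty\le1$, H\"older's inequality gives the routine bound
\[
\bigl|w^\top(\Sigma_{Sj}\pm\Sigma_{Sk})\bigr|\le\sum_{i\in S,\,i\neq j}|\Sigma_{ij}\pm\Sigma_{ik}|=\|\Sigma_{Sj}\pm\Sigma_{Sk}\|_1-|\Sigma_{jj}\pm\Sigma_{jk}|.
\]

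\textbf{The cross term and conclusion.} The crux is the second term. Extend $v$ by zeros on $S^c$ to $\tilde v\in\reals^p$, so that $v^\top(\Sigma_{Sj}\pm\Sigma_{Sk})=\tilde v^\top\Sigma(e_j\pm e_k)$ with $p$-dimensional coordinate vectors. Because $\Sigma\succeq0$, Cauchy--Schwarz for the bilinear form $(a,b)\mapsto a^\top\Sigma b$ yields
\[
\bigl|\tilde v^\top\Sigma(e_j\pm e_k)\bigr|\le\sqrt{\tilde v^\top\Sigma\tilde v}\,\sqrt{(e_j\pm e_k)^\top\Sigma(e_j\pm e_k)}=\lambda_s\sqrt{2\pm2\Sigma_{jk}}\le2\lambda_s,
\]
using $\tilde v^\top\Sigma\tilde v=v^\top\Sigma_{SS}v=\lambda_s^2$ and $|\Sigma_{jk}|\le1$. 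Dividing by $v_j\ge1/\sqrt s$ bounds the second term by $2\lambda_s\sqrt s$. Combining with the previous paragraph, $2|\Sigma_{jj}\pm\Sigma_{jk}|\le\|\Sigma_{Sj}\pm\Sigma_{Sk}\|_1+2\lambda_s\sqrt s$, which is precisely the asserted inequality. The ``as a result'' part then follows immediately: if $\lambda_s\le\delta'/(\sqrt s(R+1))$, then for this $j$, every $k\in S^c$, and the matching sign,
\[
\Sigma_{jj}\pm\Sigma_{jk}\le\lambda_s\sqrt s+\tfrac12\|\Sigma_{Sj}\pm\Sigma_{Sk}\|_1\le\frac{\delta'}{R+1}+\frac{R}{R+1}\|\Sigma_{Sj}\pm\Sigma_{Sk}\|_1,
\]
since $R/(R+1)\ge\tfrac12$ for $R\ge1$; this violates \eqref{eq:MR:incoh}, so $\Sigma\notin\mri_S(\delta';R)$.

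\textbf{Expected obstacle.} The only genuinely subtle point is the cross term: it cannot be controlled by $\ell_1$- or operator-norm information about $\Sigma_{SS}$ alone, and indeed small $2\times2$ examples show the naive bound fails; one must invoke positive semidefiniteness of the \emph{entire} matrix $\Sigma$, which the Cauchy--Schwarz step above packages cleanly. The accompanying bookkeeping---choosing $j$ as the argmax coordinate of the bottom eigenvector so it is simultaneously $k$-independent and satisfies $v_j\ge1/\sqrt s$ (the source of the $\sqrt s$ factor)---is elementary.
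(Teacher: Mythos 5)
Your proof is correct and follows essentially the same route as the paper's: a unit eigenvector for $\lambda_s^2$, the argmax coordinate giving the $1/\sqrt{s}$ factor, a positive-semidefiniteness/Cauchy--Schwarz bound of $2\lambda_s$ on the eigenvector-weighted correlations, and an $\ell_\infty$--$\ell_1$ (H\"older) bound on the remaining coordinates, followed by the same rearrangement. The only cosmetic difference is that you bound $\tilde v^\top\Sigma(e_j\pm e_k)$ in one Cauchy--Schwarz step, while the paper bounds $|\mu^\top\Sigma_{Sj}|$ and $|\mu^\top\Sigma_{Sk}|$ separately by $\lambda_s$ each via the covariance interpretation.
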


The proofs of Lemma~\ref{lem:mr:incoh:SigSS:I} and~\ref{lem:mr:incoh:R:infinity}, and Proposition~\ref{prop:small:eigenval} appear in Appendix~\ref{sec:rem:proof}. We now consider some examples in which we compare the performance of MR, as controlled by the incoherence introduced in Definition~\ref{def:MR:inc} to that of Lasso. For the incoherence parameter controlling the performance of the Lasso, see Definition~\ref{lasso:incoh}.

We now provide an example which illustrates that MRI could be much more relaxed than PWI, and in the extreme case even match the Lasso incoherence. We assume that the reader is familiar with these two conditions; for details, see~\eqref{eq:LAI:def} and~\eqref{eq:PWI:def} in the appendices. In particular, Lasso incoherence condition, i.e., $\mnorm{\Sigma_{S^c S}^{} \Sigma_{SS}^{-1}}_\infty \le 1-\delta < 1$, is a necessary and sufficient condition for (signed) support recovery by the Lasso. In Appendix~\ref{sec:lasso:incoh}, we provide a self-contained proof of this fact.

%\aaa{Need to discuss the definition of PWI and Lasso incoherence somewhere before this.}

\begin{exa}[Pairwise incoherence vs MRI]
\label{ex:pwi_mri_lasso}
	Consider the case where the on-support (i.e., relevant) variables are correlated in groups of size $r$ and the off-support variables are each correlated with at most $r$ of the relevant variables. We compare the bounds that Lasso incoherence~\eqref{eq:LAI:def}, MRI, and PWI impose on the tolerable levels of correlation. We show that the Lasso incoherence, as well as MRI, impose an $O(1/r)$ bound on the cross correlations, MRI additionally imposes an $O(1/r)$ bound on the on-support correlations, and PWI imposes an $O(1/s)$ bound on both the on-support and the cross correlations. When $r=2$, i.e. the case of pairwise correlations, and $R=1$, the Lasso and marginal regression reach identical incoherence conditions, while PWI remains quite restrictive.
	
	More precisely, assume that $\Sigma_{SS}$ is block-diagonal with $b$ blocks $(1-\mu) I_{r_i} + \mu 1_{r_i}1_{r_i}^T,\, i\in[b]$ where $1\leq r:=\max_i r_i < s$ and $\mu \in [-1/(r-1),1]$. Furthermore\footnote{To simplify the calculations, we also assume that for at least one $k$, the support of $\Sigma_{Sk}$ is aligned with one of the blocks in $\Sigma_{SS}$ with size $r$}, assume that the support of $\Sigma_{Sk},\, k\in S^c$ is equal to $\eta 1_r$. Here $1_r \in \reals^r$ is the vector of all ones. The lower bound on $\mu$ is to make $\Sigma_{SS} \succeq 0$. In this case, $\Sigma_{SS}^{-1}$ is block-diagonal with blocks $\frac{1}{1-\mu} \big[ I_{r_i} - \frac{\mu}{1-\mu + \mu r_i} 1_{r_i} 1_{r_i}^T\big],\, i\in[b]$. Then Lasso incoherence~\eqref{eq:LAI:def}, with slack $\delta =0$, is equivalent to
	\begin{align*}
		\max_{k \in S^c} \norm{\Sigma_{SS}^{-1} \Sigma_{Sk}}_1 =  \frac{\eta(r+\mu(3r-4))}{(1-\mu)(1-\mu+r\mu)} \le 1
	\end{align*}
	which holds when $|\eta| \leq (1-\mu) / r$, given $\mu \in (-1/(r-1),1)$. Letting $\gamma := R / (R+1)$, the MR incoherence, with slack $\delta' = 0$, is a subset of $\gamma(1\pm\eta + (r-1 - k)|\mu\pm\eta| + k|\eta|) < 1\pm \eta$ for all $0\leq k \leq r-1$. Thus, the MRI condition is 
	\begin{align*}
		|\eta| < \min \Big\{ \frac{-\mu+\zeta}{1-\zeta}, \frac{\mu+\zeta}{1+\zeta} \Big\},
			\quad |\mu| < \zeta:= \frac1{R(r-1)}.
	\end{align*}
%	In the most favorable situation where $R = 1$, this condition is more restrictive than Lasso incoherence, for every $\mu\in (-1/(s-1),1)$. 
	An interesting case is when $R=1$. We illustrate the conditions for Lasso incoherence, MRI, and PWI in Figure~\ref{fig:comparison:incoherences}. 
%\begin{figure}\label{fig:comparison:incoherences}
%\includegraphics[scale=0.20]{img/mri-pwi-lasso.jpg}
%\caption{Comparison between lasso, MRI, and PWI}
%\end{figure}
\begin{figure}\label{fig:comparison:incoherences}
	\includegraphics[width=.49\textwidth]{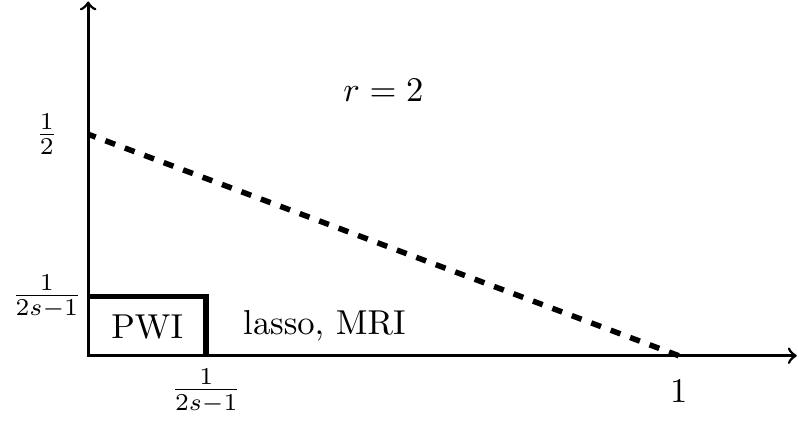}
	\includegraphics[width=.49\textwidth]{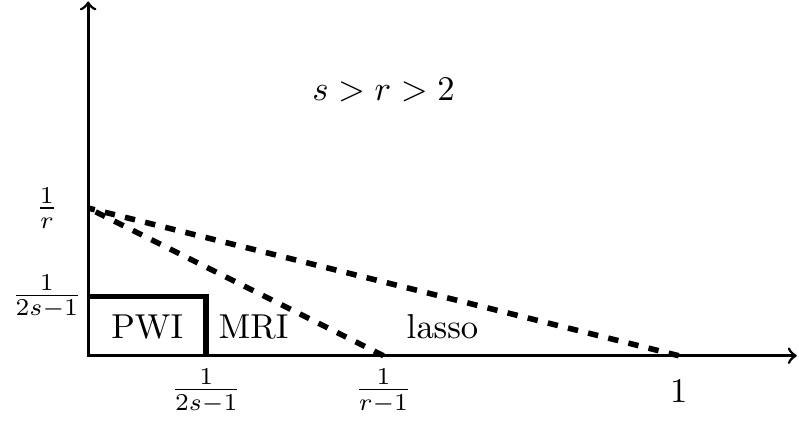}
	\caption{Comparison between the Lasso, MRI, and PWI in Example~\ref{ex:pwi_mri_lasso} for $R=1$. The horizontal axis is the maximum on-support correlation ($\mu$) and the vertical axis the maximum correlation between on and off support variables ($\eta$).}
\end{figure}
	
\end{exa}
	
	The observation that even when $\Sigma_{S S^c} = 0$, MR could fail might seem surprising, but it is  a well-known fact related to the idea of faithfulness in graphical models~\citep{spirtes2000causation}. For specific choices of $\beta$, one could have $\cov(Y,X_j) = 0$ even when $\beta_j \neq 0$ due to the confounding effect of the other variables on the support: $X_i, i \in S\setminus\{j\}$. This type of ``cancellation of correlations'' due to confounding factors has been well-documented in the literature. See for example~\cite{robins2003uniform} and~\cite{Wasserman2009}. Our results, as in Example~\ref{ex:pwi_mri_lasso},  make precise exactly how much confounding from on-support variables can be tolerated by MR before it fails.
	
%	\aaa{Partial correlation example.}

%\begin{rem}\label{rem:second:order:info}
%	Population level methods we discuss in this paper assume access only up to the second order moments. That is, instead of using the whole distribution $\pr_{\beta,\Sigma}$, they only work with the covariance matrix of $(Y,X)$, a $(p+1)\times (p+1)$ matrix, assuming model~\eqref{eq:lin:model:pop} holds. This is equivalent to being given 
%		  access to $\beta^T \Sigma \beta$, $\Sigma \beta$ and $\Sigma$,
%		  % which is basically the covariance matrix of $(Y,X)$. 
%		  when $\sigma^2$ is known.
%		  Marginal regression only uses the second piece of this information, namely, $\Sigma \beta$.
%\end{rem}

\subsection{Marginal regression under sampling}
\label{sec:mr:sampling}
In this section, we analyze the performance of  MR on a sample of size $n$ from model~\eqref{eq:lin:model:pop}. In particular, we assume
%\begin{align*}
$	\x_ i =(\x_{i1},\dots,\x_{ip}) \sim X$ and $y_i \sim Y$
%\end{align*}
i.i.d. for $i=1,\dots,n$ where $(X,Y)$ is distributed as in~\eqref{eq:lin:model:pop}. Note that $x_i \in \reals^p$ and $y_i \in \reals$. In addition, we assume that the feature and noise vectors are independent Gaussians: $X \sim N(0,\Sigma)$ and $\eps \sim N(0,\sigma^2 I)$. The sample version of  MR  replaces the population covariance $\pcor= \cov(X,Y)$ with the sample version:
\begin{align*}
	\scor = (\scor_j) = \frac1n \sum_{i=1}^n x_i y_i \in \reals^p.
\end{align*}
For any set $\Gamma \subset \reals^p$ and $t > 0$, let us define
\begin{align}\label{eq:zeta:def}
	\xi(\Sigma; \Gamma,t) 
	:= \sup_{\beta \in \Gamma,\; \norm{\beta}_2 \le t} \sqrt{\beta^T \Sigma \beta}
	= \sup_{\beta \in \Gamma,\; \norm{\beta}_2 \le t} \norm{\Sigma^{1/2} \beta}_2.
\end{align}

\begin{lem}\label{lem:concent:scor}
	Consider model~\eqref{eq:lin:model:pop} with $\beta \in \Gamma_S$ and $\norm{\beta}_2 \le t$, %with $\supp(\beta) \in S$, 
	and assume that $\log p / n \le C$ for sufficiently small $C > 0$. Then, with probability at least $1-2p^{-c_1}$,
	\begin{align*}
		%\infnorm{\scor - \pcor} \;\le\; \big(\opnorm{\Sigma_{SS}}^{1/2} \norm{\beta}_2 + \sigma\big) \sqrt{ \frac{c_2 \log p}{n}}.
		\infnorm{\scor - \pcor} \;\le\; \big(\xi(\Sigma;\Gamma_S,t) + \sigma\big) \sqrt{ {c_2 \log p}/{n}}.
	\end{align*}
\end{lem}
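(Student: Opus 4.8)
The plan is to reduce the claim to a standard Bernstein-type concentration bound, applied coordinatewise and then combined via a union bound over the $p$ coordinates. Since $\ex(X)=0$ and $\cov(X,\eps)=0$, the population statistic is $\pcor=\cov(X,Y)=\ex[XX^T]\beta=\Sigma\beta$, so $\pcor_j=(\Sigma\beta)_j$. Substituting $y_i=\beta^T x_i+\eps_i$, for each $j\in[p]$ we get
\begin{align*}
	\scor_j-\pcor_j \;=\; \frac1n\sum_{i=1}^n\Big(x_{ij}(\beta^T x_i)-(\Sigma\beta)_j\Big)\;+\;\frac1n\sum_{i=1}^n x_{ij}\eps_i ,
\end{align*}
an average of i.i.d.\ mean-zero random variables $Z_{ij}:=x_{ij}y_i-\ex[x_{ij}y_i]$ (the first sum is centered because $\ex[x_{ij}(\beta^T x_i)]=(\Sigma\beta)_j$, and the second because $x_i$ and $\eps_i$ are independent).

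The first real step is to bound $\|Z_{ij}\|_{\psi_1}$. Because $X\sim N(0,\Sigma)$ with $\Sigma_{jj}=1$ and $\eps_i\sim N(0,\sigma^2)$ independent of $x_i$, the variables $x_{ij}$, $\beta^T x_i$, $\eps_i$ are (jointly) Gaussian with $\|x_{ij}\|_{\psi_2}\lesssim 1$, $\|\beta^T x_i\|_{\psi_2}\lesssim\sqrt{\beta^T\Sigma\beta}$, and $\|\eps_i\|_{\psi_2}\lesssim\sigma$. Using the inequality $\|UV\|_{\psi_1}\le\|U\|_{\psi_2}\|V\|_{\psi_2}$ (valid whether or not $U,V$ are independent), the triangle inequality for $\|\cdot\|_{\psi_1}$, and the fact that centering inflates the $\psi_1$ norm only by a constant factor, we obtain
\begin{align*}
	\|Z_{ij}\|_{\psi_1}\;\lesssim\;\|x_{ij}\|_{\psi_2}\|\beta^T x_i\|_{\psi_2}+\|x_{ij}\|_{\psi_2}\|\eps_i\|_{\psi_2}\;\lesssim\;\sqrt{\beta^T\Sigma\beta}+\sigma\;\le\;\xi(\Sigma;\Gamma_S,t)+\sigma ,
\end{align*}
the last inequality by the hypotheses $\beta\in\Gamma_S$, $\norm{\beta}_2\le t$ and the definition~\eqref{eq:zeta:def} of $\xi$. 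Write $K:=c\,(\xi(\Sigma;\Gamma_S,t)+\sigma)$ for the resulting constant multiple.

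Finally, Bernstein's inequality for averages of i.i.d.\ sub-exponential variables gives, for every $u>0$, $\pr(|\scor_j-\pcor_j|\ge u)\le 2\exp(-c'n\min(u^2/K^2,\,u/K))$. Taking $u=K\sqrt{c_2\log p/n}$ and using $\log p/n\le C$ with $C$ small enough that $u/K=\sqrt{c_2\log p/n}\le 1$, the minimum equals the quadratic term and the bound becomes $2p^{-c'c_2}$. A union bound over $j\in[p]$ then yields $\pr(\infnorm{\scor-\pcor}\ge u)\le 2p^{1-c'c_2}$, so choosing $c_2$ large enough that $c_1:=c'c_2-1>0$ proves the claim. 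All of this is routine; the only step that deserves a second look is the $\psi_1$ bound on $Z_{ij}$ — in particular handling the product $x_{ij}(\beta^T x_i)$ of two \emph{correlated} Gaussians, which is exactly where the factor $\sqrt{\beta^T\Sigma\beta}$ enters and thereby produces $\xi$ (rather than a looser quantity such as $\norm{\beta}_1$ or $\norm{\beta}_2$); checking that this matches~\eqref{eq:zeta:def} on the nose is the one bookkeeping point worth care.
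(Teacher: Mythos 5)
Your proposal is correct and follows essentially the same route as the paper: bound the sub-exponential ($\psi_1$) norm of each product $x_{ij}y_i$ by $\lesssim \sqrt{\beta^T\Sigma\beta}+\sigma \le \xi(\Sigma;\Gamma_S,t)+\sigma$, apply Bernstein's inequality coordinatewise in the quadratic regime (using $\log p/n$ small), and finish with a union bound over $j\in[p]$. The only cosmetic difference is that the paper bounds $\|Y\|_{\psi_2}\lesssim\sqrt{\beta^T\Sigma\beta+\sigma^2}$ directly and applies the product inequality once, whereas you split $y_i=\beta^Tx_i+\eps_i$ and apply it twice with the $\psi_1$ triangle inequality; both give the same constant.
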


%The lemma is proved in Appendix~\ref{sec:rem:proof}.
	Sample MR succeeds in support recovery whenever $\min_{j \in S} |\scor_j| > \max_{j \in S^c} |\scor_j|$. Combining Lemma~\ref{lem:concent:scor} and Theorem~\ref{thm:MR:main:res}, we have the following guarantee on the support recovery performance of the sample MR:
	\begin{thm}[Sample MR consistency]\label{thm:sample:MR}
		Under the linear model~\eqref{eq:lin:model:pop} with $\beta \in  \Gamma_{S,\rho,R}$ and $\norm{\beta}_2 \le t$, for any $\rho > 0$ and $R \ge 1$,
		the sample MR   recovers the support  with probability at least $1-2p^{-c_1}$ if 
		\begin{align*}
			%\Sigma \in \mri_S(2\infnorm{\scor-\pcor},\rho,R)
			\Sigma \,\in\, \mri_S\Big(2\big(\xi(\Sigma;\Gamma_S,t) + \sigma\big) \frac1\rho \sqrt{ {c_2 \log p}/{n}};\,R \Big).
		\end{align*}
		
	\end{thm}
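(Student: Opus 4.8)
\textbf{Proof plan for Theorem~\ref{thm:sample:MR}.}

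The plan is to combine the deterministic guarantee of Theorem~\ref{thm:MR:main:res}(a) with the concentration bound of Lemma~\ref{lem:concent:scor}, reading the sampling error as a perturbation of the population covariance that the ``slack'' mechanism in~\eqref{eq:pop:supp:recovery:slack:def} is designed to absorb. First I would recall the success criterion for sample MR: it recovers $\supp(\beta)=S$ exactly when $\min_{j\in S}|\scor_j| > \max_{k\in S^c}|\scor_k|$, since MR outputs the $s$ coordinates of largest magnitude and $|S|=s$. So it suffices to show that, on the high-probability event of Lemma~\ref{lem:concent:scor}, this strict separation holds.

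Next I would quantify the effect of replacing $\pcor$ by $\scor$. Set $\delta := 2\big(\xi(\Sigma;\Gamma_S,t)+\sigma\big)\sqrt{c_2\log p/n}$, so that Lemma~\ref{lem:concent:scor} gives $\infnorm{\scor-\pcor}\le \delta/2$ with probability at least $1-2p^{-c_1}$. The key observation is that the population marginal correlations satisfy $\pcor = \cov(X,Y) = \cov(X,\beta^TX+\eps) = \Sigma\beta$, using $\cov(X,\eps)=0$ from~\eqref{eq:lin:model:pop}. Therefore $\scor$ is exactly the population marginal-correlation vector $\Sigma'\beta$ for the perturbed matrix $\Sigma' := \Sigma + (\scor - \pcor)\beta^T/\|\beta\|_2^2$ — or, more robustly, one simply notes that $\infnorm{\scor - \Sigma\beta}\le \delta/2$ means $\scor$ is a valid ``$\Sigma'\beta$'' in the sense that there is some $\Sigma'\in\ball_\infty(\Sigma,\beta;\delta/2)$ with $\Sigma'\beta = \scor$; indeed $\ball_\infty(\Sigma,\beta;\delta/2)=\{\Sigma':\infnorm{(\Sigma'-\Sigma)\beta}\le\delta/2\}$ by definition, and the vector $v:=\scor-\Sigma\beta$ has $\infnorm{v}\le\delta/2$, so a rank-one correction realizing $v=(\Sigma'-\Sigma)\beta$ exists (e.g.\ $\Sigma' = \Sigma + v\,\beta^T/\|\beta\|_2^2$, symmetrized if one insists on symmetry by a standard argument; the guarantee in~\eqref{eq:pop:supp:recovery:slack:def} only needs membership in the $\ell_\infty$ ball). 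Hence the sample MR run on $\scor$ is identical to the population MR run on $\pr_{\beta,\Sigma'}$.

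Then I would invoke Theorem~\ref{thm:MR:main:res}(a): since by hypothesis $\Sigma\in\mri_S(\delta/\rho;R)$ and $\delta$ is precisely the slack appearing in the theorem's statement, the population MR recovers $S$ uniformly over $\Gamma_{S,\rho,R}$ with slack $\delta$, meaning exactly that $\alg(\pr_{\beta,\Sigma'})=\supp(\beta)$ for every $\beta\in\Gamma_S$ and every $\Sigma'\in\ball_\infty(\Sigma,\beta;\delta/2)$. Applying this to our realized $\Sigma'$ gives $S = \supp(\beta)$ recovered by sample MR on the event of Lemma~\ref{lem:concent:scor}, which has probability at least $1-2p^{-c_1}$; this completes the proof. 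The main obstacle — and the reason the argument is slightly more than a one-line citation — is the bookkeeping that the sample vector $\scor$ can be interpreted as $\Sigma'\beta$ for an admissible $\Sigma'$: one must check that the $\ell_\infty$ perturbation bound on the \emph{vector} $\scor-\Sigma\beta$ translates to membership in $\ball_\infty(\Sigma,\beta;\delta/2)$ (which it does, since that ball is defined in terms of $\infnorm{(\Sigma'-\Sigma)\beta}$, not a matrix norm), and that the factor-of-2 matching between the $\delta/2$ in Lemma~\ref{lem:concent:scor} and the $\delta/2$-radius ball versus the $\delta/\rho$ slack argument of $\mri_S$ is consistent — which is exactly why the slack parameter in the theorem statement carries the factor $2$ and the $1/\rho$.
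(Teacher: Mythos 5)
Your overall strategy is the one the paper uses: set $\delta := 2\big(\xi(\Sigma;\Gamma_S,t)+\sigma\big)\sqrt{c_2\log p/n}$, invoke Lemma~\ref{lem:concent:scor} to get $\infnorm{\scor-\pcor}\le \delta/2$ with probability at least $1-2p^{-c_1}$, and then let the slack in Theorem~\ref{thm:MR:main:res}(a) absorb the sampling error, with the factor $2$ and the $1/\rho$ accounted for exactly as you describe. The one place where your write-up does not quite go through as stated is the device of realizing $\scor$ as $\Sigma'\beta$ for $\Sigma' = \Sigma + v\beta^T/\|\beta\|_2^2$ and then citing the uniform-recovery-with-slack guarantee~\eqref{eq:pop:supp:recovery:slack:def}: that guarantee quantifies only over $\pr_{\beta,\Sigma'}\in\Pclin^{p+1}$, i.e.\ $\Sigma'$ must be an admissible covariance in the model class (symmetric, positive semidefinite, unit diagonal), and your rank-one correction is in general none of these; symmetrizing it changes $(\Sigma'-\Sigma)\beta$ and does not obviously preserve $\Sigma'\beta=\scor$, and PSD-ness and the diagonal normalization are not addressed at all. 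So, read literally, the statement you invoke does not cover the $\Sigma'$ you construct.

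The gap is cosmetic rather than structural, because the content of Theorem~\ref{thm:MR:main:res}(a) that is actually needed is the population separation with slack, i.e.\ condition~\eqref{eq:mr:recovery:cond:1}: $|\pcor_k| + \delta < |\pcor_j|$ for all $j\in S$, $k\in S^c$, $\beta\in\Gamma_{S,\rho,R}$. The paper's proof avoids your embedding step entirely: by the triangle inequality, $|\scor_j|\ge|\pcor_j|-\infnorm{\scor-\pcor}$ and $|\scor_k|\le|\pcor_k|+\infnorm{\scor-\pcor}$, so sample MR succeeds whenever the population separation holds with slack $2\infnorm{\scor-\pcor}$; this is guaranteed by $\Sigma\in\mri_S\big(2\infnorm{\scor-\pcor}/\rho;R\big)$, which follows from the theorem's hypothesis on the event of Lemma~\ref{lem:concent:scor} because $\mri_S(\delta';R)$ is monotone in $\delta'$. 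If you replace your ``admissible $\Sigma'$'' bookkeeping with this direct triangle-inequality reduction (or simply cite~\eqref{eq:mr:recovery:cond:1} instead of~\eqref{eq:pop:supp:recovery:slack:def}), your argument is complete and coincides with the paper's.
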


Although there could be tighter bounds on $\xi(\Sigma;\Gamma_S,t)$, esp. when $R = 1$, here we consider the general bound  $\xi(\Sigma;\Gamma_S,t) \le \opnorm{\Sigma_{SS}}^{1/2} \min\{t, \sqrt{s} \rho R\} $ which is obtained by noting that
%	using $\supp(\beta) = S$ by assumption , so that
\begin{align*}
\beta^T \Sigma \beta = \beta_S^T \Sigma_{SS} \beta_S = \norm{\Sigma_{SS}^{1/2} \beta_S}_2^2 \le \opnorm{\Sigma_{SS}^{1/2}}^2 \norm{\beta_S}_2^2 = \opnorm{\Sigma_{SS}} \norm{\beta}_2^2,
\end{align*} 
and that $\norm{\beta}_2 \le \min\{t, \sqrt{s} \rho R\}$ for any $\beta \in \Gamma_S$ with $\norm{\beta}_2\le t$. Rewording Theorem~\ref{thm:sample:MR}, using this bound, %general bound on $\xi(\Sigma;\Gamma_S)$,
 we have that MR succeeds in support recovery for $\beta \in \Gamma_S$ with $\norm{\beta}_2 \le t$, with probability at least $1-2p^{-c_1}$ if
\begin{align}\label{eq:reword:sample:complexity}
	 \Sigma \,\in\, \mri_S (\delta;\,R ), \quad \text{and} \quad 
	 	n \,\,\gtrsim\,\, \delta^{-2} \big(\sigma^2 +\opnorm{\Sigma_{SS}} \min\{t^2,s \rho^2 R^2\}\big)\,\rho^{-2} \log p.
\end{align}
To observe the typical sample complexity required by~\eqref{eq:reword:sample:complexity}, assume that $\delta = \Omega(1)$, $\opnorm{\Sigma_{SS}} = O(1)$, $\sigma^2 = O(1)$ and either of the following two typical scalings of the parameters hold: (a) $\norm{\beta}_2 \asymp 1$ and $\minnorm{\beta} \asymp 1/\sqrt{s}$ hence $t = O(1)$ and $\rho \asymp 1/\sqrt{s}$ or (b)  $\infnorm{\beta} \asymp \minnorm{\beta} \asymp 1$, that is, $\rho \asymp R \asymp 1$. %, implying by Cauchy-Schwarz inequality $\norm{\beta}_2 \lesssim \sqrt{s}$. 
In either of these cases, \eqref{eq:reword:sample:complexity} predicts that $n \gtrsim s \log p$ is sufficient for recovery by MR. This is the well-known minimax scaling of the support recovery problem; see~\cite{wainwright2009information}.%,HDS}.

\section{Taxonomy of support recovery conditions} \label{sec:landscape}
In this section, we offer a broad perspective on the \emph{truthfulness conditions} governing the performance of several filter and wrapper methods for feature selection. Moreover, we introduce a duality between truthfulness and incoherence conditions for various methods. Our taxonomy further elucidates the strengths and  limitations of MR; relates MR to popular feature selectors including Lasso, (Orthogonal) Matching Pursuit, and SIS; and introduces new conjectures and open questions. 
%We introduce a uniform context to study iterative  methods for feature selection. We define a set of truthfulness conditions that seem necessary for a wide range of greedy and optimization methods used for sparse recovery. 

%We can characterize its necessary condition for uniform exact support recovery using the famous Selection based on marginal regression is in the core of many popular methods and inspire many others. Selecting covariates with high Pearson correlation (MR in wasserman) or removing those with low correlation (SIS in Fan) are super fast methods for sparse recovery with proven guarantees for success. Matching pursuit and orthogonal matching pursuit run iteratively and use  marginal regression of the residual to judge the relevance of variables. Some implementations of lasso use the same iterative marginal regression logic to build the lasso curve. Among non-linear methods for recovery, the concept of Pearson correlation has been generalized to mutual information or impurity reduction. These generalizations overcome a wider range of functional forms but their logic is the same as marginal regression.

The connection between a wrapper method and the filter it uses in scoring the covariates at each iteration extends naturally to their corresponding truthful conditions. Here, we consider MR and the related wrapper methods. For the MR, we are interested in the uniform exact recovery in $\Gamma_S$, defined in~\eqref{eq:Gamma:S:def}. Noting that under model~\eqref{eq:lin:model:pop} $\cov(Y,X_j) = \ip{\Sigma_{*j},\beta}$ for any $j$,  the truthfulness condition for MR becomes

\begin{enumerate}[series=truth,label=(F\arabic*)]
\item $\max_{k \in S^c} |\ip{\Sigma_{*k},\beta}| < \min_{j \in S} |\ip{\Sigma_{*j},\beta}|,\quad \forall \beta \in \Gamma_{S,\rho,R}$. \label{truth:mr:r}
\end{enumerate}
We call this \emph{max-min-$R$} condition as it states that the maximum correlation with an off-support covariate must not exceed the minimum correlation with an on-support one. By definition, \ref{truth:mr:r} expresses the requirement that MR fully recovers the support. In Theorem~\ref{thm:MR:main:res}, we proved that MR incoherence is a necessary and sufficient condition for~\ref{truth:mr:r}. Let us separate the case where $R=1$,
\begin{enumerate}[resume=truth,label=(F\arabic*)]
\item $\max_{k \in S^c} |\ip{\Sigma_{*k},\beta}| < \min_{j \in S} |\ip{\Sigma_{*j},\beta}|,\quad \forall \beta \in \Gamma_{S,\rho,1}$.\label{truth:mr:1}
\end{enumerate}
This condition, \emph{max-min-1}, is evidently a weaker condition as it requires uniform recovery over a smaller class of parameters.%\aaa{Why do we intoduce F2, seems not used?}

For the MP and OMP, following previous literature, one wants conditions for uniform recovery inside $\Gamma_{S,\rho,\infty}$, i.e. all non-zero regression coefficients. The corresponding truthfulness condition is
\begin{enumerate}[resume=truth,label=(F\arabic*)]
\item $\max_{k \in S^c} |\ip{\Sigma_{*k},\beta}| < \max_{j \in S} |\ip{\Sigma_{*j},\beta}|,\quad \forall \beta \in \R^p\backslash\{0\}, \beta_{S^c}=0$.\label{truth:mp:infinity}
\end{enumerate}
This condition, which we call \emph{max-max-$\infty$}, is clearly necessary and sufficient for the first iteration of the MP and OMP to succeed, i.e., for the first step to select a covariate which is truly on-support. The same condition also guarantees correct selection in the remaining iterations; this can be reasoned by considering the correlation between the residual and the remaining covariates and noting that the required condition for selecting a correct covariate is the same as the inequality in~\eqref{truth:mp:infinity} with a different $\beta_S$ from $\R^s\backslash\{0\}$ (ct. Remark~\ref{rem:mp:truth:proof} in Appendix~\ref{sec:other:proofs}). In~\cite{Tropp2004} the author shows one side of the following equivalence:
\begin{lem}\label{lem:lasso:truth:incoh}
	\ref{truth:mp:infinity}	is equivalent to the Lasso incoherence condition:
	$\mnorm{\Sigma_{S^c S}^{} \Sigma_{SS}^{-1}}_\infty  < 1$.
	
\end{lem}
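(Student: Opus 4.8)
The plan is to show the two implications separately, reformulating condition~\ref{truth:mp:infinity} as a statement about the vector $v := \Sigma_{SS}\beta_S \in \R^s$, which ranges over all of $\R^s \setminus \{0\}$ as $\beta_S$ does (since $\Sigma_{SS}$ is invertible). With this substitution, for $\beta$ supported on $S$ we have $\ip{\Sigma_{*j},\beta} = (\Sigma_{SS}\beta_S)_j = v_j$ for $j \in S$, so $\max_{j\in S}|\ip{\Sigma_{*j},\beta}| = \infnorm{v}$, and $\ip{\Sigma_{*k},\beta} = (\Sigma_{S^cS}\beta_S)_k = (\Sigma_{S^cS}\Sigma_{SS}^{-1} v)_k$ for $k \in S^c$, so $\max_{k\in S^c}|\ip{\Sigma_{*k},\beta}| = \infnorm{\Sigma_{S^cS}\Sigma_{SS}^{-1} v}$. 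Hence \ref{truth:mp:infinity} is exactly the statement
\[
\infnorm{\Sigma_{S^cS}\Sigma_{SS}^{-1} v} < \infnorm{v}, \qquad \forall v \in \R^s \setminus \{0\}.
\]

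For the direction (Lasso incoherence $\implies$ \ref{truth:mp:infinity}), suppose $\mnorm{\Sigma_{S^cS}\Sigma_{SS}^{-1}}_\infty < 1$. Then for any $v \neq 0$, the definition of the $\ell_\infty$ operator norm gives $\infnorm{\Sigma_{S^cS}\Sigma_{SS}^{-1} v} \le \mnorm{\Sigma_{S^cS}\Sigma_{SS}^{-1}}_\infty \, \infnorm{v} < \infnorm{v}$, which is exactly the displayed inequality. This direction is immediate; it is the ``one side'' attributed to~\cite{Tropp2004}.

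For the converse (\ref{truth:mp:infinity} $\implies$ Lasso incoherence), I would argue by contraposition, exploiting that the $\ell_\infty$ operator norm is attained: write $M := \Sigma_{S^cS}\Sigma_{SS}^{-1}$ and recall $\mnorm{M}_\infty = \max_k \norm{M_{k*}}_1$. If $\mnorm{M}_\infty \ge 1$, pick a row index $k_0$ achieving the maximum and take $v \in \{-1,+1\}^s$ with $v_i = \operatorname{sign}(M_{k_0 i})$ (choosing either sign when an entry is zero). Then $(Mv)_{k_0} = \sum_i M_{k_0 i} v_i = \norm{M_{k_0*}}_1 = \mnorm{M}_\infty \ge 1 = \infnorm{v}$, so $\infnorm{Mv} \ge \infnorm{v}$ for this particular nonzero $v$, contradicting the displayed reformulation of \ref{truth:mp:infinity}. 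The corresponding $\beta_S = \Sigma_{SS}^{-1} v$ is the explicit witness to failure of MP/OMP selection.

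The only subtlety — and the main thing to check carefully rather than a real obstacle — is that \ref{truth:mp:infinity} is stated over \emph{all} nonzero $\beta$ supported on $S$ with strict inequality, so one must confirm that the sign-vector $v$ above indeed produces a genuine (non-strict-inequality-violating) counterexample, i.e. that $\infnorm{Mv} \ge \infnorm{v}$ suffices to negate the strict ``$<$'' in \ref{truth:mp:infinity}; it does, since \ref{truth:mp:infinity} asserts strict inequality for all such $\beta$. One should also note for completeness that the equivalence only concerns the first-iteration selection guarantee as phrased, and the paragraph preceding the lemma (cf.\ Remark~\ref{rem:mp:truth:proof}) already handles why the same inequality governs all subsequent OMP iterations, so nothing further is needed here.
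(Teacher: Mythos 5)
Your proof is correct and takes essentially the same route as the paper's: the change of variables $v=\Sigma_{SS}\beta_S$ (a bijection of $\R^s\setminus\{0\}$) reduces \ref{truth:mp:infinity} to $\infnorm{\Sigma_{S^cS}\Sigma_{SS}^{-1}v}<\infnorm{v}$ for all nonzero $v$, which the paper then identifies with $\mnorm{\Sigma_{S^cS}\Sigma_{SS}^{-1}}_\infty<1$ by definition of the operator norm. Your sign-vector construction merely makes explicit the attainment of the $\ell_\infty$ operator norm (the step the paper compresses into ``by definition''), so no substantive difference remains.
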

The other side follows easily and, for completeness, we provide a simple proof of both sides in Appendix~\ref{sec:rem:proof}. A consequence of this equivalence is that~\ref{truth:mp:infinity} can serve as a truthfulness condition for the Lasso. Another way to see this is to consider forward-stagewise regression, which is known to be a fast implementation of the Lasso. In this implementation, the covariates with the highest correlation are exploited to update the residual with a fixed step size. The necessity and sufficiency of ~\ref{truth:mp:infinity} for the Lasso thus can be argued by noting the similarity between the MP and forward-stagewise regression. %\aaa{This is rather handwaving... the connection is not quite exact between lasso and stagewise regression; seems to be a limiting argument, etc. Might be OK for NIPS.}

%(F3) is the necessary and sufficient truthfulness condition for MP, OMP, and lasso. It is the condition that MP and OMP and forward stagewise regression need to succeed in every step of picking a new covariate. [Tropp] shows that it is a consequence of the incoherence condition that is generally known for lasso. We observe that it is also sufficient for the lasso incoherence (see Lemma ?). Notice the quantifier in (F3) does not control the spread of the coefficients. While marginal regression selection is sensitive to the spread, lasso and iterative methods based on marginal regression are not hurt by the size of $R$.

%\begin{lem}
%(F3) is equivalent to lasso incoherence.
%\end{lem}

It is not clear how~\ref{truth:mp:infinity} connects to~\ref{truth:mr:r} or~\ref{truth:mr:1}. In particular, we do not know if one requires more restrictions on the covariance matrix than the others. However, based on our simulations, we conjecture that~\ref{truth:mr:1} implies~\ref{truth:mp:infinity}, i.e. if for a particular covariance matrix, the marginal regression can recover the support for all $\beta_S \in \{+1, -1\}^s$, then the Lasso is guaranteed to recover the support for all non-zero $\beta$ with support $S$. If this relation holds then the marginal regression could be no stronger than the Lasso in uniform sparse recovery, even under conditions that favor MR ($R=1$).
%\aaacmt{We do show a case where they are equivalent? Maybe remove this.}\jkcmt{I removed 'strictly' so I think it should be OK to keep it.}. We state this conjecture formally in below

\begin{conj}
\ref{truth:mr:1} implies \ref{truth:mp:infinity}.
\end{conj}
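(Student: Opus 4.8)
The plan is to prove the contrapositive: assuming \ref{truth:mp:infinity} fails, i.e. $\mnorm{\Sigma_{S^c S} \Sigma_{SS}^{-1}}_\infty \ge 1$ (using Lemma~\ref{lem:lasso:truth:incoh}), I would construct a sign vector $\beta_S \in \{+1,-1\}^s$ witnessing a failure of \ref{truth:mr:1}. Fix $k \in S^c$ achieving the maximum row sum, so $\onenorm{(\Sigma_{S^c S}\Sigma_{SS}^{-1})_{k*}} \ge 1$; write $v := (\Sigma_{SS}^{-1}\Sigma_{Sk})$, so that $\onenorm{v} \ge 1$ and, crucially, $\Sigma_{Sk} = \Sigma_{SS} v$, meaning the off-support covariate $X_k$ behaves (in its correlations with the on-support block) like the linear combination $v^T X_S$. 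The natural choice is $\beta_S = \operatorname{sign}(v)$, for which $\ip{\Sigma_{*k},\beta} = \Sigma_{Sk}^T \beta_S = (\Sigma_{SS} v)^T \operatorname{sign}(v) = v^T \Sigma_{SS}\operatorname{sign}(v) = \ip{\Sigma_{Sj},\beta}$-type quantities summed — more directly, $\ip{\Sigma_{*k},\beta} = v^T \Sigma_{SS}\beta_S$ while $\ip{\Sigma_{*j},\beta} = (\Sigma_{SS}\beta_S)_j$ for $j \in S$. So I want to compare $v^T(\Sigma_{SS}\beta_S)$ against $\min_j |(\Sigma_{SS}\beta_S)_j|$ with $\beta_S = \operatorname{sign}(v)$.

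The key computation is then: with $w := \Sigma_{SS}\beta_S = \Sigma_{SS}\operatorname{sign}(v)$, we have $v^T w = \sum_{j} v_j w_j$ and since $\beta_S = \operatorname{sign}(v)$ forces $w_j = (\Sigma_{SS}\operatorname{sign}(v))_j$. I would instead try to exploit the identity more cleverly: note $\ip{\Sigma_{*k},\beta} = \Sigma_{Sk}^T\beta_S$ and $\ip{\Sigma_{*j},\beta} = \Sigma_{Sj}^T\beta_S = (\Sigma_{SS}\beta_S)_j = e_j^T\Sigma_{SS}\beta_S$. The relation I really want is that $X_k$ can be replaced by $\sum_{j\in S} v_j X_j$ up to a component uncorrelated with all of $X_S$; then $\cov(Y,X_k) = \sum_j v_j \cov(Y,X_j) = \sum_j v_j (\Sigma_{SS}\beta_S)_j$. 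Choosing $\beta_S$ so that $\operatorname{sign}((\Sigma_{SS}\beta_S)_j) = \operatorname{sign}(v_j)$ for every $j$ would give $|\cov(Y,X_k)| = \sum_j |v_j|\,|\cov(Y,X_j)| \ge \onenorm{v}\cdot \min_j |\cov(Y,X_j)| \ge \min_j |\cov(Y,X_j)|$, violating \ref{truth:mr:1}. So the real task is to find $\beta_S\in\{\pm1\}^s$ with $\operatorname{sign}(\Sigma_{SS}\beta_S) = \operatorname{sign}(v)$ componentwise (and no zero entries).

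The main obstacle — and the reason the conjecture is stated rather than proved — is exactly this last step: showing that such a sign vector exists. For $\beta_S = \operatorname{sign}(v)$ itself there is no guarantee that $\Sigma_{SS}\operatorname{sign}(v)$ has the same sign pattern as $v$, since $\Sigma_{SS}$ can distort signs. One would like a fixed-point or combinatorial argument: consider the map $\beta_S \mapsto \operatorname{sign}(\Sigma_{SS}\beta_S)$ on the hypercube $\{\pm1\}^s$ and argue it has a fixed point compatible with $v$, or use a degree/parity argument, or perturb. A cleaner route might be to avoid pinning down $\operatorname{sign}(v)$ and instead directly optimize: the failure of \ref{truth:mr:1} is equivalent to $\max_{\beta_S\in\{\pm1\}^s}\big[\,|\Sigma_{Sk}^T\beta_S| - \min_j|(\Sigma_{SS}\beta_S)_j|\,\big] \ge 0$ for some $k$, and one would try to lower-bound this maximum by $\onenorm{\Sigma_{SS}^{-1}\Sigma_{Sk}} - 1$ via a duality/rounding argument (relaxing $\beta_S$ to the $\ell_\infty$ ball, solving the continuous problem where the bound is tight, then rounding without losing too much). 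I expect the rounding step to be delicate precisely because \ref{truth:mr:1} lives on the discrete set $\{\pm1\}^s$ while Lasso incoherence is naturally a continuous ($\ell_\infty$-ball) condition — bridging that gap is the crux, and is presumably why the authors could only verify the implication empirically.
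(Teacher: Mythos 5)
This statement is left as a conjecture in the paper---the authors offer only simulation evidence, no proof---so there is no ``paper proof'' to match; the relevant question is whether your proposal closes the gap, and it does not (as you yourself acknowledge). What you do establish is sound and is the natural reduction: by Lemma~\ref{lem:lasso:truth:incoh}, failure of \ref{truth:mp:infinity} (with $\Sigma_{SS}$ nonsingular) gives $k\in S^c$ with $v:=\Sigma_{SS}^{-1}\Sigma_{Sk}$ satisfying $\onenorm{v}\ge 1$, and since $\cov(Y,X_k)=v^T\Sigma_{SS}\beta_S$ and $\cov(Y,X_j)=(\Sigma_{SS}\beta_S)_j$, any $\beta_S\in\{\pm1\}^s$ with $\sign\big((\Sigma_{SS}\beta_S)_j\big)=\sign(v_j)$ on the support of $v$ would indeed violate \ref{truth:mr:1} via $|\Sigma_{Sk}^T\beta_S|=\sum_j|v_j|\,|(\Sigma_{SS}\beta_S)_j|\ge\onenorm{v}\min_j|(\Sigma_{SS}\beta_S)_j|$. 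The genuine gap is exactly the existence step you flag, and it is worse than ``delicate'': the map $\beta_S\mapsto\sign(\Sigma_{SS}\beta_S)$ on the hypercube is in general \emph{not} surjective onto sign patterns, so no fixed-point or degree argument can deliver the prescribed pattern $\sign(v)$ for an arbitrary positive definite $\Sigma_{SS}$. For instance, with the equicorrelated block $\Sigma_{SS}=(1-\mu)I_s+\mu 1_s1_s^T$ and $\mu>1/2$, every $\beta_S\in\{\pm1\}^3$ with $1^T\beta_S=1$ gives $\Sigma_{SS}\beta_S$ with all positive entries, so mixed sign patterns such as $(+,+,-)$ are unreachable. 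Hence the contrapositive cannot be completed by aligning with $\sign(v)$; one would have to exploit the slack $\onenorm{v}\ge1$ to tolerate partial misalignment, or use a different witness $k$ or a different extremal $\beta_S$ altogether, and your $\ell_\infty$-relaxation-plus-rounding idea currently has no quantitative rounding bound to support it.

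Two smaller points. First, your contrapositive must also handle the degenerate branch where \ref{truth:mp:infinity} fails because $\Sigma_{SS}$ is singular: there $v$ is undefined, and it is not evident (for $R=1$, where MRI is only sufficient) that \ref{truth:mr:1} must fail; this case needs a separate argument. Second, note that \ref{truth:mr:1} also carries the constraint $\minnorm{\beta_S}>\rho$, but since both conditions are scale-invariant in $\beta_S$ this is harmless---worth one sentence if you write this up. As it stands, your proposal is a reasonable research plan that correctly isolates the discrete-versus-continuous (sign-vector versus $\ell_\infty$-ball) obstruction, but it does not prove the conjecture, which remains open in the paper as well.
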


To give partial evidence of this conjecture, we show that max-min-$R$ when $R=6$ for all subsets $S$ of size $s$ implies \emph{Restricted Isometry Property (RIP)}, a condition known to be sufficient for the close relative of the  Lasso known as the basis pursuit regression; see~\eqref{eq:BP:def} in Appendix~\ref{sec:comparison}. The RIP is defined as follows: (cf.~\cite{HDS})

\begin{figure}\label{fig:truthfulness:incoherence}
	\centering
	\scalebox{.75}{
		\begin{tikzpicture}
		\draw (0,0) rectangle +(2.6,.6);
		\draw (4,0) rectangle +(2.5,.6);
		\draw (8,0) rectangle +(2.8,.6);
		\draw (12,0) rectangle +(2.6,.6);
		
		\draw (.3,-2) rectangle +(2,.6);
		\draw (4.3,-2) rectangle +(1.9,.6);
		\draw (8.2,-2) rectangle +(2.4,.6);
		\draw (12.6,-2) rectangle +(1.4,.6);
		
		\node[above right,label={\large (F1)}] at (0,0) {\Large max-min-$R$};
		\node[above right,label={\large (F2)}] at (4,0) {\Large max-min-1};
		\node[above right,label={\large (F3)}] at (8,0) {\Large max-max-$\infty$};
		\node[above right,label={\large (F4)}] at (12,0) {\Large min-min-$R$};
		
		\node[above] at (1.3,-2.1) {\Large MRI($R$)};
		\node[above] at (5.25,-2.1) {\Large MRI($1$)};
		\node[above] at (9.4,-2.0) {\Large Lasso Inc.};
		\node[above] at (13.3,-2.0) {\Large TBD};
		
		\draw[very thick,->] (2.8,.3) -- +(1,0);
		\draw[very thick,->] (6.8,.3) -- +(1,0);
		\node[above] at (7.3,.3) {\Large ?};
		
		\draw[very thick,<->] (1.3,-.2) -- +(0,-1);
		\draw[very thick,<-] (5.3,-.2) -- +(0,-1);
		\draw[very thick,<->] (9.3,-.2) -- +(0,-1);
		\draw[very thick,<->] (13.3,-.2) -- +(0,-1);
		
		\end{tikzpicture}
	}
	
	\caption{Truthfulness and Incoherence conditions for support recovery.}
\end{figure}
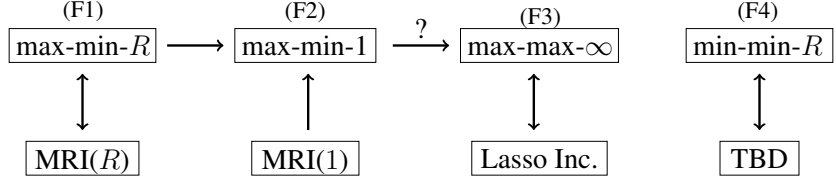

\begin{defn} For  $s \in [p]$,  the covariance matrix $\Sigma$ satisfies a restricted isometry property (RIP) of order $s$ with constant $\delta_s(\Sigma) > 0$ if for all subsets $S$ of size at most $s$, one has $\opnorm{\Sigma_{SS} - I_s} \leq \delta_s(\Sigma).$
	
\end{defn}

A sufficient condition for the exact parameter recovery by the basis pursuit is an RIP condition on $\Sigma$ of order $2s$, namely, $\delta_{2s}(\Sigma) \leq 1/3$.
Let us define $\Gamma_{(s)} := \bigcup_{S :\; |S|\, \le\, s} \Gamma_S$ where $\Gamma_S$ is given in~\eqref{eq:Gamma:S:def}.
%\begin{align*}
%	\Gamma_{(s)} := \bigcup_{S :\; |S|\, \le\, s} \Gamma_S, \quad \text{where $\Gamma_S$ is given in~\eqref{eq:Gamma:S:def}}.
%\end{align*}
An incoherence condition holds over $\Gamma_{(s)}$ iff it holds over $\Gamma_S$ for all subset $S$ of size at most $s$.
The following propositions relates RIP to the MR incoherence derived in Section~\ref{sec:MR:population}. The proofs are deferred to Appendix~\ref{sec:comparison}. %\ref{sec:rem:proof}.

\begin{prop}\label{prop:MR:RIP}
Assume  $\diag(\Sigma) = 1_p$. If  MR incoherence~\eqref{eq:MR:incoh} holds over $\Gamma_{(s)}$, then %$\Gamma_S$, for every $S$ of size at most $s$, then
\begin{align*}
\delta_{2s}(\Sigma) < \frac{(1-\delta')}{R}\frac{2s-1}{s-1}.
\end{align*} 
\end{prop}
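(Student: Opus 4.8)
The plan is to extract a single off-diagonal entry of $\Sigma_{SS}$ (for a well-chosen support $S$) and bound it using the MR incoherence inequality~\eqref{eq:MR:incoh}, then assemble these bounds into a bound on $\opnorm{\Sigma_{SS} - I_s}$ via a crude norm estimate. Concretely, fix any two distinct indices $a,b$ and consider a support set $T$ of size $s$ that contains $a$ but not $b$; then $b \in T^c$, so applying~\eqref{eq:MR:incoh} with the choice $j = a$, $k = b$ over $\Gamma_{(s)}$ gives
\begin{align*}
\frac{R}{1+R}\|\Sigma_{Ta} \pm \Sigma_{Tb}\|_1 + \frac{\delta'}{1+R} < \Sigma_{aa} \pm \Sigma_{ab}.
\end{align*}
Since $\Sigma_{aa} = 1$, adding the two inequalities (the $+$ and $-$ versions) and using $\|u+v\|_1 + \|u-v\|_1 \ge 2\max(\|u\|_1,\|v\|_1) \ge 2\|\Sigma_{Ta}\|_1$ together with $\|\Sigma_{Ta}\|_1 \ge \Sigma_{aa} = 1$ (the diagonal term is included since $a \in T$) should yield a lower bound on $\|\Sigma_{Ta}\|_1$, hence an upper bound controlling the off-support-to-on-support cross terms. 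The cleaner route, though, is to isolate $\Sigma_{ab}$: subtracting or manipulating the two versions of the displayed inequality, and bounding $\|\Sigma_{Ta} \pm \Sigma_{Tb}\|_1 \ge \|\Sigma_{Ta}\|_1 - \|\Sigma_{Tb}\|_1$ or $\ge |\Sigma_{aa} \pm \Sigma_{ab}| + (\text{rest})$, one arrives at a bound of the form $|\Sigma_{ab}| \lesssim \frac{1-\delta'}{R}$ up to the combinatorial factor $\frac{2s-1}{s-1}$ that appears in the statement.

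Next I would convert the entrywise bound on the off-diagonal entries of $\Sigma_{SS}$ into an operator-norm bound. For a symmetric matrix $M = \Sigma_{SS} - I_s$ with zero diagonal and $|M_{ij}| \le \theta$ for all $i \ne j$, one has $\opnorm{M} \le \mnorm{M}_\infty \le (s-1)\theta$. Taking $\theta$ to be the entrywise bound derived above (which will be of order $\frac{1-\delta'}{R}\cdot\frac{2s-1}{s(s-1)}$ or similar, so that multiplying by $s-1$ produces exactly $\frac{(1-\delta')}{R}\frac{2s-1}{s}$) then gives the claimed $\delta_{2s}(\Sigma) < \frac{(1-\delta')}{R}\frac{2s-1}{s-1}$. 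The order-$2s$ RIP constant is what is relevant because the bound must hold for all $S$ of size at most $2s$; since MRI is assumed over $\Gamma_{(s)}$, applying it to cross terms between two disjoint size-$s$ groups is what lets us cover size-$2s$ support sets — this is the reason the factor $2s-1$ rather than $s-1$ shows up in the numerator.

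The main obstacle I anticipate is the bookkeeping in the first step: the MR inequality~\eqref{eq:MR:incoh} mixes the $\ell_1$ norm $\|\Sigma_{Ta} \pm \Sigma_{Tb}\|_1$ (a sum over all of $T$, including the diagonal entry $\Sigma_{aa}=1$ and all on-support correlations $\Sigma_{ta}$, $t \in T$) with the single scalar $\Sigma_{aa} \pm \Sigma_{ab}$, so extracting a clean bound on $|\Sigma_{ab}|$ alone requires carefully choosing which $T$ to use and possibly summing~\eqref{eq:MR:incoh} over several choices of $j \in T$, or over both sign patterns, to cancel the unwanted terms. Getting the constant to land exactly at $\frac{2s-1}{s-1}$ rather than something weaker will be the delicate part; the rest is routine. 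I would also need to confirm that a support set $T$ of size exactly $s$ (as required by $\Gamma_{(s)}$, or at most $s$) with the prescribed membership always exists, which is immediate as long as $s \ge 2$, and note that the diagonal normalization $\diag(\Sigma) = 1_p$ is used to replace $\Sigma_{aa}$ and $\Sigma_{jj}$ by $1$ throughout.
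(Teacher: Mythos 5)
Your high-level framing (split a size-$2s$ set into two size-$s$ halves and apply MRI over $\Gamma_{(s)}$ with $j$ in one half and $k$ in the other) matches the paper, but the core mechanism you propose --- extract an \emph{entrywise} bound $|\Sigma_{ab}| \le \theta$ and then use $\opnorm{M} \le \mnorm{M}_\infty \le (\text{row length})\cdot\theta$ --- has a genuine gap and cannot produce the stated constant. For this to work you would need $\theta$ of order $\frac{1-\delta'}{R}\cdot\frac{1}{s}$ (as you yourself note, ``$\frac{1-\delta'}{R}\cdot\frac{2s-1}{s(s-1)}$ or similar''), but MRI does not imply any entrywise bound that decays in $s$: by Lemma~\ref{lem:mr:incoh:SigSS:I}, a covariance with $\Sigma_{SS}=I$ and $\Sigma_{Sk}$ supported on a single coordinate satisfies MRI whenever that single entry is below roughly $\frac{1-\delta'}{R}$, independently of $s$. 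So the best entrywise bound available is of order $\frac{1-\delta'}{R}$, and multiplying by the row length of the $2s\times 2s$ block (which is $2s-1$, not $s-1$ as you wrote) gives a bound of order $s\cdot\frac{1-\delta'}{R}$ --- off by a factor of $s$ from the target $\frac{1-\delta'}{R}\cdot\frac{2s-1}{s-1}$. The arithmetic at the end of your second paragraph also does not close: $(s-1)\cdot\frac{1-\delta'}{R}\frac{2s-1}{s(s-1)} = \frac{1-\delta'}{R}\frac{2s-1}{s}$, which is not the claimed bound.

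The factor $\frac{2s-1}{s-1}$ does not come from counting entries; it comes from \emph{column $\ell_1$ bounds}, which is the structure MRI actually controls. The paper's route: fix a balanced partition $\St = S_0 \cup S_1$, $|S_0|=|S_1|=s$, and average the $+$ and $-$ versions of~\eqref{eq:MR:incoh} (convexity of $\norm{\cdot}_1$) to get $\norm{\Sigma_{S_i j}}_1 < \frac{1+R-\delta'}{R}$, hence with $\Delta = \Sigma - I_p$ the on-block columns satisfy $\norm{\Delta_{S_i j}}_1 < \frac{1-\delta'}{R}$. A second manipulation (writing $\Delta_{S_ij}\pm\Delta_{S_ik} = \Sigma_{S_ij}\pm\Sigma_{S_ik}-e_j$ and averaging again) gives $\norm{\Delta_{S_i k}}_1 < \frac{1-\delta'}{R} + |\Delta_{jk}|$ for every $j \in S_i$; summing this over the $s$ choices of $j\in S_i$ and absorbing $\sum_j|\Delta_{jk}| = \norm{\Delta_{S_ik}}_1$ yields the cross-block column bound $\norm{\Delta_{S_i k}}_1 < \frac{s(1-\delta')}{R(s-1)}$. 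Then $\opnorm{\Delta_{\St\St}} \le \mnorm{\Delta_{\St\St}}_1 \le \frac{1-\delta'}{R} + \frac{s(1-\delta')}{R(s-1)} = \frac{1-\delta'}{R}\cdot\frac{2s-1}{s-1}$, using that for symmetric matrices the $\ell_1$ operator norm (maximum absolute column sum) dominates the $\ell_2$ operator norm. If you replace your entrywise step with these two column-sum estimates, your outline becomes the paper's proof.
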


This proposition roughly shows that in terms of the strength, the MR incoherence is stronger than RIP, which is in turn comparable in strength to the Lasso incoherence.%\aaacmt{I am not sure about this, do you have a reference for RIP being stronger than Lasso incoherence? RIP might still be more relaxed than Lasso incoherence... not clear. RIP implies restricted null space (RNS) which is equivalent to basis pursuit recovery. I am not sure about the relation of RNS to Lasso incoherence, but seems a weaker condition. } 

Finally, we introduce the truthfulness condition relevant to SIS, which progressively discards irrelevant covariates (conservatively) with the aim of keeping all the relevant ones among the remaining covariates at all times. To achieve this goal, assuming that we want to discard at most $d$ covariates and do so by removing one covariate at a time,  one requires  that the $d$ covariates with least correlations with the target are ``off'' the support, so that we are guaranteed not to discard  any relevant covariate at any stage. Thus, the necessary condition for SIS to achieve uniform recovery over all $\beta \in \Gamma_{S,\rho,R}$ is:
\begin{enumerate}[resume=truth,label=(F\arabic*)]
\item $\min_{k \in S^c} |\ip{\Sigma_{*k},\beta}| < \min_{j \in S} |\ip{\Sigma_{*j},\beta}|,\quad \forall \beta \in \Gamma_{S,\rho,R}$. \label{truth:sis:r}
\end{enumerate}
Formalizing an incoherence condition for~\ref{truth:sis:r} similar to what we have for the other three cases would be of great interest. Note that~\ref{truth:sis:r} is the least stringent of the four conditions we introduced. In other words, we expect~\ref{truth:sis:r} to impose the least constraints on the covariance matrix. As far as we know, little is known about the exact nature of the conditions~\ref{truth:sis:r} imposes.

Figure~\ref{fig:truthfulness:incoherence} summarizes the discussion in this section. It illustrates the relation among the truthfulness conditions and their corresponding incoherence conditions for the MR, (O)MP, Lasso, and the SIS. This taxonomy of truthfulness and incoherence conditions can provide a platform to compare other subclasses of filter and wrapper methods. We hope it can benefit the future research in sparse recovery.

%We believe this taxonomy of truthfulness conditions is a step towards understanding other methods for sparse recovery. A particular application domain is decision trees. Since decision tree is an iterative method and at each step (node of the trained tree) it selects a covariate with maximum impurity reduction, the tree learning is close in structure to matching pursuit and a dependence structure similar to lasso incoherence should govern its support consistency. For vanilla methods like Dstump, the truthfulness condition (F1) should be analyzed for higher order cross moments , similar in nature to what we do in this paper with MRI.

%\input{comparison}

%!TEX root = mr-nips.tex
\section{Discussion and extensions}\label{sec:discuss}
We studied support recovery performance of marginal regression (MR) and obtained a near complete characterization of the conditions for recovery in terms of the covariance matrix of the features. We introduced parameter $R$ measuring the spread of the coefficients and showed that when $R \ge 2$ or $\Sigma_{SS} = I$, the MR incoherence conditions (Definition~\ref{def:MR:inc}) are necessary and sufficient. We have an example (not mentioned in the paper for brevity) that these conditions are not necessary otherwise (but still sufficient).
%  when ``$R \in [1,2)$ and $\Sigma_{SS} \neq I$'', but still remains sufficient.
%What remains open is whether these conditions are necessary when ``$R \in [1,2)$ and $\Sigma_{SS} \neq I$''. The technical difficulty in this case is that the absolute dual $\Gamma_S^\dagger$ introduced in~\eqref{eq:abs:dual:def} (Section~\ref{sec:proofs}) takes a combinatorial form. 
%Another open problem is whether there are covariance structures for which MR recovers and Lasso fails.  This is an interesting questions since as we have shown, when $R$ is close to 1 and $\Sigma_{SS}$ close to identity, the two incoherences governing MR and Lasso approach each other. If such cases exist, we suspect them to occur only when $R \in [1,2)$. 
An open question is whether the truthfulness of MR implies Lasso incoherence, which is what we conjecture. If true, this settles the question of the dominance of Lasso over MR.
% for $R \in [2,\infty]$ but leaves the room for MR to still be superior to Lasso in some cases when $R \in [1,2)$.
%
Overall, our theory provides a more optimistic view of MR for feature selection and provides some long overdue insights into its strengths and  limitations.
We have also provided a framework to study the truthfulness conditions for general filter and iterative wrapper methods. 

Finally, we have laid the foundation to extend the MR incoherence condition to non-linear filter methods.
Indeed,  the core results we developed here about the performance of MR can be readily extended to a more general framework. We sketch the steps towards a nonlinear extension here and leave the rigor to a later work. The plan is to show that MR or any other filter that has a semi-norm property  w.r.t. to its arguments (such as tree-based impurity reduction scores when training decision trees) can benefit a performance guarantee similar in nature to that of Theorem~\ref{thm:MR:main:res}, under a general sparse additive model with a sufficiently restricted function class.

Here, we briefly sketch the argument.
Consider a general additive model, in which $Y = \sum_{j\in S} f_j(X_j) + \eps$, with similar assumptions about $X, \eps$ as in~\eqref{eq:lin:model:pop}. Suppose that the function tuple $f=(f_j)_{j\in S}$ is from the following class subclass of $\mathcal F_0 =\{ f:\; \; f'\in L^2(a,\infty) \}$: 
\begin{align}\label{eq:ext:func:reg:assu}
\mathcal{F} := \bigcap_{j,k\in S^c,\;\lambda \in [-1,1]} \big\{f \in \mathcal F_0:\; 
|\ip{Y_{j,k}^\lambda, f'}_{L^2}| \;\ge\; \alpha \norm{Y_{j,k}^\lambda}_{L^2}\, \norm{f'}_{L^2} \big\}%\mathcal F_{\alpha,j,j',k,\lambda}
\end{align}
where  $Y_{j,k}^\lambda := \mu_{X_S,X_j} + \lambda \mu_{X_S,X_k}$ with $\mu_{X_S,X_j} = (\mu_{X_i,X_j}, i \in S)$ and  $\mu_{X_i, X_j}(t) := \ex[ X_i 1\{ t \le X_j \} ]$. One can show for functions in this class that $|\cov(X_j+\lambda X_k, f_{i}(X_{i}))|$ is bounded away from zero and also bounded above. Then using the triangle inequality and  sub-linearity of $|\cov(\cdot, Z)|$, one can replicate the argument for Theorem~\ref{thm:MR:main:res}. 
The only obstacle is to show the class of function tuples in~\eqref{eq:ext:func:reg:assu} is non-trivial. We believe this to be true for sufficiently regular classes of functions.  Furthermore, we need to characterize the corresponding class for other semi-norm filter methods, allowing us to replace the Pearson correlation operator $|\cov(\cdot,Z)|$ with a general seminorm filter. We also observe that the role played in our arguments by the spread of the coefficients, the $R$ parameter, will be played by the bounds on the spread of the derivatives of the underlying functions.

%\aaacmt{sketch how one might extend to sparse additive models in Section~? and point out to the only difficulty that remains, that is, we need a control on something like the minimum eigenvalue of an operator over the class of derivatives of the functions. It is good if they see that the role of the spread can be taken up by the derivative of the functions.}

%\newpage
%\bibliographystyle{plain}
\bibliography{refs}

%\newpage
\appendix

\section{Lasso incoherence}\label{sec:lasso:incoh}
%\aaa{We have talk about Lasso incoherence earlier}
In this section, we recall the Lasso incoherence condition and show that it is indeed necessary and sufficient for the Lasso to perform exact ``support plus sign recovery'' at the population level. Though this result is more or less known~\citep{HDS}, there is some nuance to the statement we give here at the population level; in particular, we could not find a result that covers the necessity of Lasso incoherence  (even at the population level) as stated here,
which is part~(a) of Proposition~\ref{prop:lasso:pop}.  We also provide a short self-contained proof of this result for completeness. We then compare the MR incoherence with some well-known incoherences that have appeared in the literature surrounding Lasso and its close relative, the basis pursuit.
%\aaa{Here we compare to lasso and other incoherences ...It is instructive to compare population level performance of the lasso as well.}

\subsection{Lasso at the population level}
Let us start by stating the incoherence condition Lasso is sensitive to:
\begin{defn}\label{lasso:incoh}
	Lasso incoherence (LAI) condition with slack $\delta$, for recovering $S \subset [p]$, is % on a covariance matrix $\Sigma$ is
	\begin{align}\label{eq:LAI:def}
		\mnorm{\Sigma_{S^c S}^{} \Sigma_{SS}^{-1}}_\infty  \le 1 - \delta
	\end{align}
	and we write $\Sigma \in \lai_S(\delta)$ if this condition holds. We also write $\Sigma \in \lai_S(0+)$ if the condition holds for some $\delta \in (0,1)$.
\end{defn}
An alternative way of writing condition~\eqref{eq:LAI:def} is $\max_{ k \,\in\, S^c} \norm{\Sigma_{SS}^{-1} \Sigma_{Sk}}_1 \le 1- \delta$. It is well known that~\eqref{eq:LAI:def} controls the model selection performance of the Lasso. Consider the population Lasso
\begin{align}\label{eq:lasso:pop}
\bett \in \argmin_{ \beta\, \in\, \reals^p} \, \frac12  \ex (Y - \beta^T X)^2 + \lambda \norm{\beta}_1
\end{align}
where $X \in \reals^p$ and $Y = X^T \beta + \eps \in \reals$ are as in model~\eqref{eq:lin:model:pop}. We say that $\bett$ is \emph{sign selection consistent} if in addition to $\supp(\bett) = \supp(\beta) =: S$, we have $\sign(\bett_S) = \sign(\beta_S)$. Recall the definition of $\Gamma_{S,\rho,R}$ from~\eqref{eq:Gamma:S:def} and note that $\Gamma_{S,0,\infty}$ is the set of all vectors $\beta \in \reals^p$ whose support is contained in $S$. We say that a subset $ \Bc_S \subset \Gamma_{S,0,\infty}$ is \emph{sign rich} if it contains all the sign patterns over $S$, i.e., every $\beta \in \reals^p$ with $\beta_{S^c} = 0$ and $\beta_S \in \bigcup_{\rho > 0} \{-\rho,\rho\}^s$ belongs to $\Bc_S$.

The following proposition shows that $\Sigma \in \lai_S(0)$ is essentially necessary and sufficient for the population Lasso to be sign selection consistent over sign rich sets of parameters. 
%where $X = (X_1,\dots,X_p)$ is a random vector and $Y$ is a (scalar) random variable. Let us assume that $Y$ and $X$ are zero-mean and let $\Sigma := \Sigma_{xx} = \ex [XX^T] \in \reals^{p \times p}$ be the covariance matrix of $X$ and $\Sigma_{xy} = \ex[X Y] \in \reals^{p}$.

%\medskip
%Fix some $S \subset [p]$ with $|S| = s$ and consider
%\begin{align*}
%\Bc_S := \big\{(\, \bets_S,0) \in \reals^p :\; \bets_S \in \realsnz^s \,\big\}
%\end{align*}
%where $\realsnz = \reals \setminus\{0\}$ and $\realsnz^s = (\realsnz)^s$ is the set of vectors of size $s$ with no nonzero entry.
%
%Assume that $Y = \ip{\bets, X}$ and let $\bets \in \Bc_S$. Then, we say that Lasso is subset selection consistent if $\supp(\bett) = S$. We say that it is sign selection consistent if in addition $\sign(\bett_S) = \sign(\bets_S)$.

\begin{prop}\label{prop:lasso:pop}
	Fix some $S \subset [p]$ and assume that $\Sigma_{SS}$ is nonsingular. Let $\Bc_S$ be any sign pattern rich subset of $\Gamma_{S,0,\infty}$. Then, under model~\eqref{eq:lin:model:pop} for $(Y,X)$ with $\beta$ set to $\bets$:
	\begin{itemize}
		\item[(a)] If for every $\bets \in \Bc_S$, the population Lasso in~\eqref{eq:lasso:pop} with input $(Y,X)$ and some $\lambda > 0 $ is sign selection consistent, then $\Sigma \in \lai_S(0)$.%$\mnorm{\Sigma_{S^c S}^{} \Sigma_{SS}^{-1}}_\infty \le 1$.
		\item[(b)] If $\Sigma \in \lai_S(0+)$, %If $\mnorm{\Sigma_{S^c S}^{} \Sigma_{SS}^{-1}}_\infty < 1$,
		 then  for every $\bets \in \Bc_S$, the Lasso with input $(Y,X)$ and sufficiently small $\lambda > 0 $, has a unique solution which is sign selection consistent.
	\end{itemize}
\end{prop}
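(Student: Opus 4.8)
The plan is to reduce everything to the first-order optimality (KKT) conditions of the population Lasso and to exploit the fact that, at the population level, the ``design Gram matrix'' is exactly $\Sigma$ and the ``cross-correlation'' vector $\ex(YX) = \Sigma\bets$ (since $\cov(X,\eps)=0$). Write the objective in~\eqref{eq:lasso:pop} as $\tfrac12 \beta^T \Sigma \beta - \beta^T \Sigma\bets + \tfrac12\ex(Y^2) + \lambda\norm{\beta}_1$; its subgradient condition at a candidate $\bett$ is $\Sigma(\bett - \bets) + \lambda z = 0$ for some $z \in \partial\norm{\bett}_1$. For part~(b), I would run the standard \emph{primal-dual witness} construction: set $\bett_{S^c} = 0$, solve $\Sigma_{SS}(\bett_S - \bets_S) + \lambda\,\sign(\bets_S) = 0$ for $\bett_S$ (this uses nonsingularity of $\Sigma_{SS}$), and then verify that the resulting $z_{S^c} = -\tfrac1\lambda \Sigma_{S^c S}(\bett_S - \bets_S) = \Sigma_{S^c S}\Sigma_{SS}^{-1}\sign(\bets_S)$ satisfies $\infmnorm{z_{S^c}} < 1$. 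The last inequality is exactly what $\Sigma \in \lai_S(0+)$ guarantees, uniformly over all sign patterns, hence over all $\bets \in \Bc_S$. For sufficiently small $\lambda$, $\bett_S = \bets_S - \lambda\,\Sigma_{SS}^{-1}\sign(\bets_S)$ has the same sign as $\bets_S$ (each coordinate of $\bets_S$ is bounded away from zero), giving sign consistency; strict dual feasibility $\infmnorm{z_{S^c}}<1$ together with nonsingularity of $\Sigma_{SS}$ upgrades this to \emph{uniqueness} of the Lasso solution by the usual argument (any other optimal $\beta'$ must share the support and the strict complementary slackness forces $\beta'_{S^c}=0$, then strict convexity on the face pins down $\beta'_S$).

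For part~(a) I would argue contrapositively: if sign selection consistency holds for \emph{every} $\bets \in \Bc_S$, then for each such $\bets$ the KKT conditions with the true sign pattern must be solvable with a feasible dual, and reading off the off-support block of the stationarity condition gives $\Sigma_{S^cS}\Sigma_{SS}^{-1}\sign(\bets_S) = z_{S^c}$ with $\infmnorm{z_{S^c}} \le 1$. Here I use that $\bett_S - \bets_S = -\lambda\Sigma_{SS}^{-1}\sign(\bett_S)$ and $\sign(\bett_S) = \sign(\bets_S)$. Since $\Bc_S$ is sign rich, as $\bets_S$ ranges over all of $\{-\rho,\rho\}^s$ the vector $\sign(\bets_S)$ ranges over all $2^s$ sign vectors in $\{-1,1\}^s$, so $\infmnorm{\Sigma_{S^cS}\Sigma_{SS}^{-1} v} \le 1$ for every $v \in \{-1,1\}^s$; taking the maximum over such $v$ recovers $\mnorm{\Sigma_{S^c S}\Sigma_{SS}^{-1}}_\infty \le 1$, i.e. $\Sigma \in \lai_S(0)$. (Note this only yields the non-strict inequality, which matches the statement of part~(a).)

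The main obstacle — and the reason a fully careful writeup is needed rather than a one-line citation — is the boundary behavior: part~(a) delivers only $\le 1$, not $< 1$, and one must be careful that the witnessed dual variable coming from an arbitrary optimal $\bett$ (which need not be the primal-dual-witness solution) still satisfies $z_{S^c} = \Sigma_{S^cS}\Sigma_{SS}^{-1}\sign(\bets_S)$; this requires that support-and-sign consistency pins down $\bett_S$ exactly via the on-support stationarity equation, which it does because once $\bett_{S^c}=0$ and $\sign(\bett_S)$ is fixed, the on-support block $\Sigma_{SS}(\bett_S - \bets_S) + \lambda\,\sign(\bets_S) = 0$ determines $\bett_S$ uniquely. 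A secondary technical point is making ``sufficiently small $\lambda$'' quantitative in part~(b): one needs $\lambda < \minnorm{\bets_S} / \mnorm{\Sigma_{SS}^{-1}}_\infty$ to preserve signs, and this threshold depends on $\bets$, but that is fine since the claim is stated per-$\bets$. The rest — convexity of the objective, existence of a minimizer, the subdifferential calculus for $\norm{\cdot}_1$ — is routine.
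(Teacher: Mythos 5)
Your proposal is correct and follows essentially the same route as the paper: part (b) is the standard primal--dual witness construction with $u_S=\sign(\bets_S)$ for small $\lambda$ and strict dual feasibility from $\lai_S(0+)$ giving uniqueness, and part (a) reads the off-support block of the KKT conditions for each sign pattern and uses sign richness to convert $\infnorm{\Sigma_{S^cS}\Sigma_{SS}^{-1}v}\le 1$ for all $v\in\{\pm1\}^s$ into $\mnorm{\Sigma_{S^cS}\Sigma_{SS}^{-1}}_\infty\le 1$, exactly as in the paper's proof. The only cosmetic quibble is that your part (a) is a direct argument, not a contrapositive one, but the content matches.
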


Proposition~\ref{prop:lasso:pop} is more of less colloquially known and can be traced back to the work of~\cite{fuchs2005recovery, tropp2006just, zhao2006model, wainwright2009sharp, Genovese2012} among others. It is not often stated in the population form presented here, and we  give a proof in Appendix~\ref{sec:rem:proof}. Note that part~(a) states something fairly strong: As long as the set of $\beta$s over which we require uniform (sign) selection consistency contains all possible sign patterns, then Lasso incoherence is necessary, no matter how small a value of $\lambda > 0$ we choose. Sign rich sets, for example, include $\Gamma_{S,\rho,R}$ for any $R \in [1,\infty]$.  The interesting aspect of the population level result is that  Lasso is still severely restricted (even with infinite sample size that is) in terms of how much correlation among features it can handle. Contrast this with the ordinary least-squares (or ridge regression), where $\beta$ itself can be recovered exactly at the population level, for any nonsingular covariance matrix $\Sigma$.

\section{Proofs}\label{sec:rem:proof}

\subsection{Proof of Theorem~\ref{thm:MR:main:res}}
\label{sec:proofs}
%\subsection{Theorem~\ref{thm:MR:main:res}}
Under model~\eqref{eq:lin:model:pop} $\cov(X,Y) = \ex X(X^T \beta + \eps) = \Sigma \beta$, that is $\cov(X_j,Y) = \ip{\Sigma_{*j},\beta}$ where $\Sigma_{*j}$ is the $j$th column of $\Sigma$.
If follows that MR achieves exact support recovery over $\Gamma_S$ with slack $\delta$ in the sense of~\eqref{eq:pop:supp:recovery:slack:def} if and only if %\aaa{Is this true the way we defined it}
%\begin{align}
%\min_{j\, \in\, S} |\ip{\Sigma_{*j},\beta}| > \delta + \max_{k \, \in\, S^c} |\ip{\Sigma_{*k},\beta}|,\quad \forall \beta \in \Gamma_S 
%\end{align}
%or equivalently
\begin{align}\label{eq:mr:recovery:cond:1}
|\ip{\Sigma_{*k},\beta}| + \delta < |\ip{\Sigma_{*j},\beta}|,\quad \forall \beta \in \Gamma_S,\; j \in S,\; k \in S^c.
\end{align}

\begin{lem}\label{lem:a:b:lambda}
	For any $a,b \in \reals$ and $\delta > 0$, we have $|a| - |b| > \delta  \iff  |a-\lambda b| > \delta,\; \forall \lambda \in [-1,1].$

\end{lem}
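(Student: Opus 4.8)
The plan is to prove the two implications separately. First I would dispatch the forward direction, which needs no case analysis: assuming $|a| - |b| > \delta$, for every $\lambda \in [-1,1]$ the triangle inequality gives $|a - \lambda b| \ge |a| - |\lambda|\,|b| \ge |a| - |b| > \delta$.

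For the reverse direction I would isolate the elementary fact that $\min_{\lambda \in [-1,1]} |a - \lambda b| = \max\{\,|a| - |b|,\ 0\,\}$. The lower bound ``$\ge$'' is immediate from $|a - \lambda b| \ge |a| - |b|$ together with $|a - \lambda b| \ge 0$. For achievability I would split into three cases: if $b = 0$, take $\lambda = 0$; if $b \ne 0$ and $|a| \ge |b|$, take $\lambda_0 \in \{-1,1\}$ chosen so that $\lambda_0 b$ has the same sign as $a$ (either choice when $a = 0$), so that $|a - \lambda_0 b| = \bigl\lvert |a| - |b| \bigr\rvert = |a| - |b|$; and if $b \ne 0$ and $|a| < |b|$, take $\lambda_0 = a/b \in (-1,1)$, which yields $|a - \lambda_0 b| = 0$. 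Since $\lambda \mapsto |a - \lambda b|$ is continuous on the compact interval $[-1,1]$, this minimum is attained, so the hypothesis ``$|a - \lambda b| > \delta$ for all $\lambda \in [-1,1]$'' is equivalent to ``$\min_{\lambda \in [-1,1]} |a - \lambda b| > \delta$'', i.e.\ to $\max\{\,|a|-|b|,\ 0\,\} > \delta$; and since $\delta > 0$, that last inequality holds if and only if $|a| - |b| > \delta$, which completes the proof.

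The hard part — such as it is — is really just the reverse direction's case analysis: one must exhibit the minimizing $\lambda$ (equivalently, minimize the convex function $\lambda \mapsto |a - \lambda b|$ over $[-1,1]$, whose unconstrained minimizer is $a/b$), and then combine attainment of the minimum on a compact set with the standing assumption $\delta > 0$ to pass from $\max\{\,|a|-|b|,\ 0\,\} > \delta$ back to $|a| - |b| > \delta$. Everything else is a one-line use of the triangle inequality, and the lemma will then feed directly into reducing the recovery condition~\eqref{eq:mr:recovery:cond:1} to the linear-functional form~\ref{truth:mr:r} used in the proof of Theorem~\ref{thm:MR:main:res}.
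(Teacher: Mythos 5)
Your proof is correct and follows essentially the same route as the paper's: both reduce the lemma to the identity $\min_{\lambda\in[-1,1]}|a-\lambda b|=\max\{|a|-|b|,0\}$, which the paper computes by rescaling to $\alpha=a/b$ and you compute by exhibiting the minimizer case by case. If anything, you are slightly more explicit about the attainment of the minimum and the role of $\delta>0$ in recovering the strict inequality, a point the paper leaves implicit.
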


Using Lemma~\ref{lem:a:b:lambda}, condition~\eqref{eq:mr:recovery:cond:1} is equivalent to $|\ip{\Sigma_{*j}+\lambda \Sigma_{*k}, \beta}| > \delta$ for all $\beta\in \Gamma_S,\; j\in S, \; k\in S^c$ and $\lambda\in [-1,1]$. For a set $\Gamma \subset \reals^p$, we define its \emph{absolute dual} with slack $\delta$ to be 
\begin{align}\label{eq:abs:dual:def}
\Gamma^\dagger= \big\{ \phi \in \reals^p :\; |\ip{\phi, \beta}| > \delta,\; \forall \beta \in \Gamma \big\}.
\end{align}
Thus, exact support recovery by MR over $\Gamma_S$ with slack $\delta$ is equivalent to
\begin{align}\label{eq:mr:recovery:cond:2}
	\Sigma_{*j}+\lambda \Sigma_{*k} \in \Gamma_S^\dagger, \quad \text{for all}\; (j,k) \in S\times S^c, \; \text{and}\;\lambda \in [-1,1].
\end{align}
%
%\begin{lem}
%The absolute dual of $\Gamma_S$ given in~\eqref{eq:Gamma:S:def} satisfies
%\begin{align*}
%	\Gamma_S^\dagger \,\supseteq\, \Big\{\phi \in \reals^p \,  : \,\, \|\phi_S\|_\infty > \frac{\delta}{\rho (1+R)}+ \frac{R}{1+R} \|\phi_S\|_1 \Big\} =:\Gamma_S'.
%\end{align*}
%\end{lem}
The following technical lemma, proved in Appendix~\ref{sec:rem:proof}, characterizes the absolute dual of the parameters space  $\Gamma_S$:
\begin{lem}\label{lem:Gamma:S:identify}
	The absolute dual of $\Gamma_S$ given in~\eqref{eq:Gamma:S:def} can be written as $\Gamma_S^\dagger = \Gamma_S' \cup \Omega_S$ where 
	\begin{align*}
	\Gamma_S' &:= \Big\{\phi \in \reals^p \,  : \,\, \|\phi_S\|_\infty > \frac{\delta}{\rho (1+R)}+ \frac{R}{1+R} \|\phi_S\|_1 \Big\} , \\
	\Omega_S \subseteq \Gamma_S'' &:= \Big\{\phi \in \reals^p \,  : \,\, \|\phi_S\|_\infty < -\frac{\delta}{\rho (1+R)}+ \frac{1}{1+R} \|\phi_S\|_1 \Big\}.
	\end{align*}
%	and
%	\begin{align*}
%	\Gamma_S^\dagger\backslash \Gamma_S' \,\subseteq\, \Big\{\phi \in \reals^p \,  : \,\, \|\phi_S\|_\infty < -\frac{\delta}{\rho (1+R)}+ \frac{1}{1+R} \|\phi_S\|_1 \Big\} =:.
%	\end{align*}
\end{lem}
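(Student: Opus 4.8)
The plan is to reduce the description of $\Gamma_S^\dagger$ to a scaling problem and then to an elementary optimization over sign patterns. Since every $\beta\in\Gamma_S$ has $\beta_{S^c}=0$, we have $\ip{\phi,\beta}=\ip{\phi_S,\beta_S}$, so only $\phi_S$ matters. Writing each admissible on-support block as $\beta_S=t\,v$ with $t=\minnorm{\beta_S}>\rho$ and $v\in V:=\{v\in\R^s:\ \minnorm{v}=1,\ \|v\|_\infty\le R\}$ sets up a bijection between $\{\beta_S:\beta\in\Gamma_S\}$ and $(\rho,\infty)\times V$. Because $\ip{\phi_S,\beta_S}=t\,\ip{\phi_S,v}$ and $\inf_{t>\rho}t\,|\ip{\phi_S,v}|=\rho\,|\ip{\phi_S,v}|$, the requirement ``$|\ip{\phi_S,\beta_S}|>\delta$ for all $\beta\in\Gamma_S$'' becomes
\begin{align*}
\min_{v\in V}\,|\ip{\phi_S,v}|\;\ge\;\delta/\rho,
\end{align*}
the minimum being attained because $V$ is compact. (The boundary of $\Gamma_S'$/$\Gamma_S''$ is the only place where one must be slightly careful about strict versus non-strict inequalities; this is absorbed by reading those sets as closures where needed.) As $|\ip{\phi_S,v}|$ depends only on $(|\phi_{S,i}|)_i$ and is invariant under permuting coordinates, I would then assume $\phi_{S,1}\ge\cdots\ge\phi_{S,s}\ge 0$ and abbreviate $m:=\|\phi_S\|_\infty=\phi_{S,1}$, $L:=\|\phi_S\|_1$, $L':=L-m$.

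The core step is to compute $\min_{v\in V}|\ip{\phi_S,v}|$ by ranging over the finitely many sign patterns of $v$. A sign pattern corresponds to a partition $S=G_+\sqcup G_-$; setting $a:=\sum_{i\in G_+}\phi_{S,i}$ and $c:=\sum_{i\in G_-}\phi_{S,i}$ (so $a+c=L$), one checks that as the magnitudes of $v$ vary over $[1,R]$ with minimum $1$, the inner product $\ip{\phi_S,v}$ sweeps out exactly the interval $[\,a-Rc,\ Ra-c\,]$ whenever both $G_\pm$ are nonempty (the domain is connected and the two endpoints are attained), while all-equal-sign patterns give $|\ip{\phi_S,v}|\ge L$ and are never optimal for $s\ge 2$. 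Hence $\min_{v\in V}|\ip{\phi_S,v}|$ is the least distance from $0$ to any such interval. Putting the largest coordinate in $G_+$ (legitimate by the $G_+\leftrightarrow G_-$ symmetry) gives $a\ge m$, $c\le L'$, hence $a-Rc\ge m-RL'$ and $Ra-c\ge Rm-L'$; a short monotonicity argument then shows the singleton partition $G_+=\{1\}$ is optimal except when its interval $[m-RL',Rm-L']$ already lies entirely to the left of $0$. This produces the trichotomy: if $(1+R)m>RL$ then $\min_{v\in V}|\ip{\phi_S,v}|=(1+R)m-RL=m-RL'$; if $L\le(1+R)m\le RL$ then the singleton interval contains $0$ and the minimum is $0$; and if $(1+R)m<L$ then the minimum is at most $L-(1+R)m=L'-Rm$, its exact value being a combinatorial (subset-sum–type) quantity we will not need.

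It remains to translate the trichotomy through $\min_{v\in V}|\ip{\phi_S,v}|\ge\delta/\rho$, using $(1+R)m-RL\ge\delta/\rho\iff\|\phi_S\|_\infty\ge\frac{\delta}{\rho(1+R)}+\frac{R}{1+R}\|\phi_S\|_1$ (the condition defining $\Gamma_S'$) and $L-(1+R)m\ge\delta/\rho\iff\|\phi_S\|_\infty\le-\frac{\delta}{\rho(1+R)}+\frac{1}{1+R}\|\phi_S\|_1$ (that defining $\Gamma_S''$). In the first regime this is precisely $\phi\in\Gamma_S'$; in the middle regime the minimum equals $0<\delta/\rho$, so $\phi\notin\Gamma_S^\dagger$ (and, consistently, $\phi$ lies in neither $\Gamma_S'$ nor $\Gamma_S''$ there); in the third regime $\phi\in\Gamma_S^\dagger$ forces $L'-Rm\ge\delta/\rho$, i.e.\ $\phi\in\Gamma_S''$. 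Setting $\Omega_S:=\Gamma_S^\dagger\cap\{\,(1+R)\|\phi_S\|_\infty<\|\phi_S\|_1\,\}$ then yields $\Gamma_S^\dagger=\Gamma_S'\cup\Omega_S$ with $\Omega_S\subseteq\Gamma_S''$, and the union is disjoint because $\delta/\rho>0$ forces $\Gamma_S'\cap\Gamma_S''=\emptyset$. The main obstacle is the third, ``combinatorial'' regime: one might worry that deciding membership in $\Gamma_S^\dagger$ there requires resolving the subset-sum question — but since the lemma only claims the containment $\Omega_S\subseteq\Gamma_S''$, the mere upper bound $\min_{v\in V}|\ip{\phi_S,v}|\le L'-Rm$ suffices. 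The one genuinely substantive step in the whole argument is therefore the monotonicity/optimality of the singleton partition in the first two regimes.
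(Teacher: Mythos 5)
Your proof is correct, and it takes a genuinely different route from the paper's. The paper proves $\Gamma_S'\subseteq\Gamma_S^\dagger$ by a direct triangle-inequality chain that peels off the largest coordinate of $\phi_S$, and proves $\Gamma_S^\dagger\subseteq\Gamma_S'\cup\Gamma_S''$ contrapositively by an explicit four-case construction: depending on which of four intervals $\|\phi_S\|_1-\|\phi_S\|_\infty$ lies in, it writes down a vector $\beta^i\in\Gamma_S$ (coordinates $\pm1$, $\pm R$, or a suitable ratio) whose inner product with $\phi$ is at most $\delta/\rho$ in absolute value. You instead reduce membership in $\Gamma_S^\dagger$ to the extremal quantity $\min_{v\in V}|\ip{\phi_S,v}|$ over your normalized class $V$ and evaluate or bound it by sweeping sign patterns and their attainable intervals $[a-Rc,\,Ra-c]$; your optimality step for the singleton pattern (any pattern with the maximal coordinate in $G_+$ has left endpoint $a-Rc\ge\|\phi_S\|_\infty-R(\|\phi_S\|_1-\|\phi_S\|_\infty)$) is exactly the short monotonicity argument needed, and using only the upper bound in the third regime indeed suffices for $\Omega_S\subseteq\Gamma_S''$. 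In effect you bring forward the $\alpha_R$ machinery that the paper introduces only later (Lemma~\ref{lem:alpha:R:bound}, used for Theorem~\ref{thm:MR:main:res}(b) with $R\ge2$); this buys an exact formula for the minimum in the first regime and a single unified trichotomy, while the paper's version is more elementary and exhibits the violating $\beta$ explicitly. Finally, the strict-versus-nonstrict caveat you flag is real and not an artifact of your method: with the strict inequalities in \eqref{eq:Gamma:S:def} and \eqref{eq:abs:dual:def}, membership in $\Gamma_S^\dagger$ is equivalent to the \emph{non-strict} bound $\min_{v\in V}|\ip{\phi_S,v}|\ge\delta/\rho$, so boundary points (e.g.\ $s=2$, $R=1$, $\rho=1$, $\delta=1/2$, $\phi_S=(1,1/2)$) lie in $\Gamma_S^\dagger$ but in neither $\Gamma_S'$ nor $\Gamma_S''$ as literally defined; the paper's own construction has the same slippage, since its $\beta^i$ only achieve $|\ip{\phi,\beta^i}|\le\delta/\rho$ at unit minimum magnitude, which does not contradict the strict dual requirement when equality holds. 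Your closure reading is therefore the appropriate fix, and it costs nothing downstream, where only open (strict) incoherence conditions are invoked.
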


%\begin{proof}
%Take any $\beta \in \Gamma_S$ and $\phi \in \Gamma_S'$.
%%assume $\phi\in \reals^p$ has this property:
%%$$
%%\|\phi_S\|_\infty > \frac{\delta}{\rho (1+R)}+ \frac{R}{1+R} \|\phi_S\|_1
%%$$
%Let $j \in \text{argmax}_{j'\in S}\; |\phi_{j'}|$. Then,
%\begin{align*}
%|\ip{\phi, \beta}| &= |\ip{\phi_S, \beta_S}| \\
%&\geq |\phi_{j}|\; |\beta_{j}| - \big| \sum_{j'\in S\backslash \{j\}} \phi_{j'}\;\beta_{j'}\big| \\
%&\geq |\phi_{j}|\; |\beta_{j}| - \sum_{j'\in S\backslash \{j\}} |\phi_{j'}|\; |\beta_{j'}| \\
%&\geq \|\phi_S\|_\infty \; \min_{j\in S} |\beta_j| - (\|\phi_S\|_1 - \|\phi_S\|_\infty)\; \|\beta_S \|_\infty \\
%&\overset{(i)}{\geq} \|\phi_S\|_\infty \; \min_{j\in S} |\beta_j| - R\;(\|\phi_S\|_1 - \|\phi_S\|_\infty)\; \min_{j\in S} |\beta_j| \\
%&\overset{(ii)}{>} \frac{\delta}{\rho} \; \min_{j\in S} |\beta_j|  \\
%&\overset{(iii)}{\geq} \delta,
%\end{align*}
%where (i) and (iii) are implied by $\beta \in \Gamma_S$ and (ii) by $\phi \in \Gamma_S'$.
%\end{proof}
%

\begin{proof}[Theorem~\ref{thm:MR:main:res}(a)]
	Let $\delta' = \delta/ \rho$. By Lemma~\ref{lem:Gamma:S:identify}, $\Gamma'_S \subset \Gamma_S^\dagger$. Hence, replacing $\Gamma_S^\dagger$ in~\eqref{eq:mr:recovery:cond:2} with $\Gamma_S'$ we get the following sufficient condition for recovery:
	\begin{align}\label{eq:temp:cond:0} 
	\frac{R}{1+R} \|\Sigma_{Sj}+\lambda \Sigma_{Sk} \|_1 +  \frac{\delta'}{1+R} < \infnorm{\Sigma_{Sj}+\lambda \Sigma_{Sk}},
	\quad \forall j\in S, \;k\in S^c,\; \lambda\in [-1,1].
	\end{align}
	%We will shortly show:
	We have (see Appendix~\ref{sec:rem:proof} for the proof):
	\begin{lem}\label{lem:infnorm:equality}
		Condition~\eqref{eq:temp:cond:0} implies $\| \Sigma_{Sj}+\lambda \Sigma_{Sk} \|_\infty =  \Sigma_{jj}+\lambda \Sigma_{jk}$ for all $\lambda\in [-1,1]$.
	\end{lem} 
	%\begin{align}\label{eq:infnorm:equal:top:elm}
	%	\| \Sigma_{Sj}+\lambda \Sigma_{Sk} \|_\infty =  \Sigma_{jj}+\lambda \Sigma_{jk}
	%\end{align}
	Therefore, \eqref{eq:temp:cond:0} implies
	\begin{align}\label{eq:temp:cond:1}
	 \frac{R}{1+R} \|\Sigma_{Sj}+\lambda \Sigma_{Sk} \|_1 + \frac{\delta'}{ 1+R} < \Sigma_{jj}+\lambda \Sigma_{jk},\quad \forall j\in S, \,k\in S^c, \,\lambda\in [-1,1].
	\end{align}
	In the other direction, \eqref{eq:temp:cond:1} clearly implies~\eqref{eq:temp:cond:0} noting that $\| \Sigma_{Sj}+\lambda \Sigma_{Sk} \|_\infty  \ge $ $\Sigma_{jj}+\lambda \Sigma_{jk} \ge 0$ for all $|\lambda|\le1$, since by assumption $\Sigma_{jj} = 1$ hence $|\Sigma_{jk}| \le 1$. Since the RHS of~\eqref{eq:temp:cond:1} is convex in $\lambda$ and the LHS is linear, \eqref{eq:temp:cond:1} holds if and only if it holds at the two endpoints $\lambda = -1,1$ which gives~\eqref{eq:MR:incoh} as desired.
\end{proof}

\begin{proof}[Theorem~\ref{thm:MR:main:res}(b), case $R \ge 2$]
	%In the remainder of the proof, 
	Assume $\rho=1$ without loss of generality. We also write $\Gamma_{S,R} = \Gamma_S$ to emphasize the dependence on $R$.
	For any $\phi\in \reals^p$, let us define 
	\begin{align*}
	\alpha_R(\phi) = \min_{\beta \,\in\, \Gamma_{S,R}} |\ip{\phi, \beta}|
	\end{align*}
	By definition, $\phi$ belongs to $\Gamma_{S,R}^\dagger$
	if and only if $\alpha_R(\phi)> \delta$.  An interesting case is when $R=1$, in which case we use the notation $\alpha(\phi)$ instead of $\alpha_1(\phi)$. The minimization in the case of $R=1$ is over $\Gamma_{S,1}$, which consists of vectors with $\pm \rho$ coordinates on the support. For any $\beta \in \Gamma_{S,1}$, it is easy to see that
	\begin{align}\label{eq:inner:prod:identity}
	\ip{\phi, \beta} = \sum_{j\in S_1} |\phi_j| - \sum_{j \in S_0} |\phi_j| 
	\end{align}
	%\aaa{$\rho$ is missing from subsequent discussion. We can assume $\rho = 1$ without loss of generality. }
	where $S_1 = \{j\in S \; : \; \sign(\beta_j) = \sign(\phi_j)\}$ and $S_0 = S\backslash S_1$. The relation works both ways, i.e., for any partition of $S$ into $S_1$ and $S_0$, there exists a $\beta\in \Gamma_{S,1}$ that \eqref{eq:inner:prod:identity} holds. %\aaa{Overload of notation regarding $S_0,S_1$ with that in~\eqref{eq:min:inner:prod:identity}, changed the one in~\eqref{eq:min:inner:prod:identity}.}
	
	Let $\{S_0(\phi), S_1(\phi)\}$ be any partition that achieves the minimum for $\phi$ %\aaa{not unique in general?}
	such that $S_1(\phi)$ corresponds to the larger of the two sums, that is
	% An interesting partition is the one that achieves the minimum:
	\begin{align}\label{eq:min:inner:prod:identity}
	\alpha(\phi) = \sum_{j \,\in\, S_1(\phi)} |\phi_j| - \sum_{j \,\in\, S_0(\phi)} |\phi_j| \ge 0.
	\end{align}
	Note that whenever $\phi \neq 0$, we have $|S_1(\phi)| \ge 1$ with $|\phi_j| > 0$ for at least some $j \in S_1(\phi)$. %\aaa{Please check, might be wrong. In general we have to the address the case where $S_1$ is empty which seems not possible unless $\phi = 0$. }
	We  adopt the convention of putting the indices of the zero coordinates of $\phi$ in $S_0(\phi)$. Thus, for $\phi \neq 0$, we have $\min_{j \in S_1(\phi)} |\phi_j| > 0 $. 
	As the following lemma states, $\alpha(\phi)$ can be used to approximate $\alpha_R(\phi)$.
	
	\begin{lem}\label{lem:alpha:R:bound}
		If $\phi\in \Gamma_{S,R}^\dagger \setminus \Gamma_{S,R}'$, then \;$\alpha_R(\phi) \leq \max \big\{0, (2-R)\, \alpha(\phi) \big\}$.
	\end{lem}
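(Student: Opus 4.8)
The plan is to upper-bound $\alpha_R(\phi)$ by exhibiting a convenient one-parameter family in $\Gamma_{S,R}$, and then to use that the hypothesis forces $\phi$ to have a small $\ell_\infty$-mass on $S$ compared to its $\ell_1$-mass. Throughout I keep the normalization $\rho=1$ fixed in the proof of part~(b). Since $\Gamma_{S,R}$ is invariant under coordinatewise sign changes, we may replace $\phi$ by $(|\phi_j|)_j$ without affecting $\alpha_R(\phi),\alpha(\phi),\|\phi_S\|_1,\|\phi_S\|_\infty$; so assume $\phi_S\ge 0$. If $\phi_S=0$ then $\ip{\phi,\beta}\equiv 0$ on $\Gamma_{S,R}$ and $\phi\notin\Gamma_{S,R}^\dagger$, contrary to hypothesis, so $\phi_S\ne 0$ and $L:=\|\phi_S\|_1>0$. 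Finally, since $\phi\in\Gamma_{S,R}^\dagger\setminus\Gamma_{S,R}'$, Lemma~\ref{lem:Gamma:S:identify} forces $\phi\in\Gamma_{S,R}''$, i.e.\ $(1+R)\|\phi_S\|_\infty<L-\delta$; as $R\ge 1$ this yields in particular $\|\phi_S\|_\infty<L/2$ (and for $R=\infty$ the condition reads $\|\phi_S\|_\infty<0$, so the hypothesis is vacuous and we may assume $R<\infty$).

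For the upper bound on $\alpha_R(\phi)$, let $\{S_0,S_1\}$ be a partition of $S$ attaining $\alpha=\alpha(\phi)$ as in~\eqref{eq:min:inner:prod:identity}, with $S_1$ the heavier side, so that $\sum_{j\in S_1}\phi_j=(L+\alpha)/2$ and $\sum_{j\in S_0}\phi_j=(L-\alpha)/2$ (using $\phi_S\ge 0$). For $t\in[1,R]$ let $\beta^{(t)}$ be supported on $S$ with $\beta^{(t)}_j=1$ for $j\in S_1$ and $\beta^{(t)}_j=-t$ for $j\in S_0$. Then $\minnorm{\beta^{(t)}_S}=1$, attained on $S_1$ which is nonempty since its $\phi$-mass is at least $L/2>0$, and $\|\beta^{(t)}_S\|_\infty=t\le R$; hence $(1+\eps)\beta^{(t)}\in\Gamma_{S,R}$ for every $\eps>0$. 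Now $\ip{\phi,\beta^{(t)}}=\tfrac{L+\alpha}{2}-t\,\tfrac{L-\alpha}{2}$ decreases linearly in $t$ from $\alpha$ (at $t=1$) down to $\tfrac{(1+R)\alpha-(R-1)L}{2}$ (at $t=R$), and the upper endpoint $\alpha$ is nonnegative, so the smallest absolute value it assumes on $[1,R]$ equals $\max\{0,\tfrac{(1+R)\alpha-(R-1)L}{2}\}$; letting $\eps\downarrow 0$ gives $\alpha_R(\phi)\le\max\big\{0,\tfrac{(1+R)\alpha-(R-1)L}{2}\big\}$.

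The last ingredient is the combinatorial estimate $\alpha(\phi)\le L/3$, valid whenever $\|\phi_S\|_\infty<L/2$; assume $\alpha>0$, the case $\alpha=0$ being trivial. In the optimal partition $\{S_0,S_1\}$ above, every $\phi_j$ with $j\in S_1$ must satisfy $\phi_j\ge\alpha$: moving such a coordinate into $S_0$ yields a partition of imbalance $|\alpha-2\phi_j|$, which is strictly less than $\alpha$ whenever $0<\phi_j<\alpha$, contradicting the minimality of $\alpha$. Moreover $|S_1|\ge 2$, because $\sum_{j\in S_1}\phi_j=(L+\alpha)/2\ge L/2>\|\phi_S\|_\infty$ cannot be attained by a single coordinate. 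Therefore $(L+\alpha)/2=\sum_{j\in S_1}\phi_j\ge|S_1|\,\alpha\ge 2\alpha$, i.e.\ $L\ge 3\alpha$. Combining $R\ge 1$ with $L\ge 3\alpha$ gives $(R-1)(L-3\alpha)\ge 0$, which rearranges to $\tfrac{(1+R)\alpha-(R-1)L}{2}\le(2-R)\alpha$; since $x\mapsto\max\{0,x\}$ is nondecreasing, the bound of the previous paragraph then gives $\alpha_R(\phi)\le\max\{0,(2-R)\alpha(\phi)\}$, as claimed.

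The only nonroutine step is the combinatorial estimate $\alpha(\phi)\le L/3$: the observation that the heavier part of an optimally balanced partition cannot contain a coordinate smaller than the imbalance, together with the fact (forced by $\|\phi_S\|_\infty<L/2$) that this part has at least two elements, is exactly what improves the trivial bound $\alpha(\phi)\le\|\phi_S\|_\infty<L/2$ to the $L/3$ needed to cancel the factor $2-R$. The construction of the $\beta^{(t)}$ and the closing algebra are bookkeeping, and the membership $\phi\in\Gamma_{S,R}''$ is read straight off Lemma~\ref{lem:Gamma:S:identify}.
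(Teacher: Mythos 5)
Your proof is correct and follows essentially the same route as the paper: you test $\phi$ against the one-parameter family interpolating between magnitude $1$ on $S_1$ and $t\in[1,R]$ on $S_0$, and use optimality of the partition together with $\phi\in\Gamma_S''$ (forcing the heavy side to contain at least two nonzero coordinates) to obtain the key estimate $\alpha(\phi)\le\sum_{j\in S_0}|\phi_j|$, i.e.\ $\|\phi_S\|_1\ge 3\,\alpha(\phi)$, exactly the paper's $\alpha(\phi)\le c$. The only cosmetic point is that your step $\sum_{j\in S_1}\phi_j\ge |S_1|\,\alpha(\phi)$ needs zero coordinates of $\phi$ placed in $S_0$ (the paper's explicit convention) or, equivalently, counting only the nonzero coordinates of $S_1$, which your own single-coordinate argument already provides.
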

	As a consequence, $\Gamma_S^\dagger = \Gamma_S'$ whenever $R\geq 2$, completing the proof of part~(b).
\end{proof}

\begin{proof}[Theorem~\ref{thm:MR:main:res}(b), case $\Sigma_{SS} = I$]
	Fix $k\in S^c$. Let $j^*\in \text{argmin}_{j'\in S} |\Sigma_{j'k}|$ and choose $\beta\in \Gamma_{S,\rho,R}$ such that
	\begin{align*}
	\beta_{j^*} = \sign(\Sigma_{j^*k})\,\rho, \quad %\text{and} \quad 
	\beta_{j'}=\sign(\Sigma_{j'k})\,R\rho, \; \;\forall j'\in S\backslash \{j^*\}.
	\end{align*}
	Selection consistency implies that
	\begin{align*}
	R\rho\; \|\Sigma_{Sk}\|_1  - (R\rho - \rho)\; |\Sigma_{Sk}|_{\min} + \delta = |\ip{\Sigma_{Sk}, \beta}| +\delta < |\ip{\Sigma_{Sj^*}, \beta}| =  \rho
	\end{align*}
	using assumption $\Sigma_{SS}=I$ in the last equality. Dividing by $\rho$ and rearraging proves~\eqref{eq:mr:incoh:SigSS:I} which is equivalent to~\eqref{eq:MR:incoh:2}, due to Lemma~\ref{lem:mr:incoh:SigSS:I}.
\end{proof}

%Now, considering the convexity of RHS and linearity of LHS, we conclude another equivalent form

\subsection{Proofs of auxiliary lemmas for Theorem~\ref{thm:MR:main:res}}

\begin{proof}[Lemma~\ref{lem:infnorm:equality}]
	Fix $j\in S,\;k\in S^c$ and let $u(\lambda) = \Sigma_{Sj}+\lambda \Sigma_{Sk} \in \reals^s$. Define
	\begin{align*}
	\Omega = \big\{\lambda\in[-1,1]\;:\; \|u(\lambda)\|_\infty = u_j(\lambda) \big\}
	\end{align*}
	where $u_j(\lambda) = \Sigma_{jj} +\lambda \Sigma_{jk}$ is the $j$th component of $u(\lambda)$.
	The set $\Omega$ contains the roots of the function $\lambda \mapsto \infnorm{u(\lambda)} - u_j(\lambda)$, and since this function is continuous, $\Omega$ is a closed set. If we also prove that $\Omega$ is an open set in $[-1, 1]$, then it must be either empty or the whole interval. It is not empty since it contains $0$ (recall the assumption $\Sigma_{jj} = 1$). In the following, we show that $\Omega$ is open in $[-1,1]$. Let
	\begin{align*}
	E := \Big\{\lambda\in[-1,1]\;:\;    \frac{R}{1+R} \|u (\lambda)\|_1 + \frac{\delta'}{1+R} < u_j(\lambda) \Big\} 
	\end{align*}
	Clearly, $E$ is open in $[-1,1]$. We plan to show that $E=\Omega$. First, we have $E\subset \Omega$ since
	\begin{align*}
	\lambda \in E &\implies 
	u_j(\lambda)\, >\, \delta' + R \big[ \|u(\lambda)\|_1 - u_j(\lambda) \big] 
	\,\ge\, \|u(\lambda)\|_1 - |u_j(\lambda)| 
	\,\ge\, \max_{j'\, \in\, S\setminus\{j\}} |u_{j'}| 
	% 	&\implies u_j = \|u\|_\infty \\
	% 	&\implies u_j \in \Omega
	\end{align*}
	implying that $u_j(\lambda) = \infnorm{u(\lambda)}$, that is $\lambda \in \Omega$.
	Assuming \eqref{eq:temp:cond:0}, we also have $\Omega \subset E$. We conclude that $E=\Omega$, hence $\Omega$ is open and should contain all of $[-1,1]$. As a result, \eqref{eq:temp:cond:0} implies \eqref{eq:temp:cond:1} and the proof is complete.
\end{proof}

\begin{proof}[Lemma~\ref{lem:alpha:R:bound}]
	Fix $\phi \in \Gamma_{S,R}^\dagger \setminus \Gamma_{S,R}'$ and let $ S_1 = S_1(\phi)$ and $S_0 = S_0(\phi)$ be as defined in \eqref{eq:min:inner:prod:identity}. According to Lemma~\ref{lem:Gamma:S:identify}, $\phi \in \Gamma_S''$ (and $\phi \neq 0$). This implies that $|S_1| > 1$. Otherwise, $|S_1| = 1$, and $S_1$ should consist of the index of a maximal element of $\phi$ in absolute value,  hence
	\begin{align*}
	\alpha(\phi) = \|\phi_S\|_\infty - (\|\phi_S\|_1 - \|\phi_S\|_\infty) = 2 \|\phi_S\|_\infty - \|\phi_S\|_1 < 0
	\end{align*}
	a contradiction.
	%\aaa{The first one above seems to be equality}
	The last inequality is a consequence of $\phi \in \Gamma_S''$.
	
	\medskip
	Now, let $c=\sum_{j \in S_0} |\phi_j|$. Then, $\alpha(\phi) = \sum_{j \in S_1} |\phi_j| - c$. Let $j^* = \text{argmin}_{j'\in S_1} |\phi_{j'}|$ and note that $|\phi_{j^*}| > 0$ by construction. 
	%\aaa{We seem to need $\phi_{j^*} \neq 0$ later. Can we argue this. We probably can move any nonzero elements to $S_0$ when we are defining these.} 
	Then,
	\begin{align}\label{eq:temp:phi:j:star:upper}
	|\phi_{j^*}| \leq \frac{1}{|S_1|}\; \sum_{j \in S_1} |\phi_j| \leq \frac{1}{2}\; (\alpha(\phi)+c)
	\end{align}
	where we have used $|S_1| > 1$. %\aaa{$|S_1| \ge 1$ seems to work as well which seems to hold in general for $\phi\ne 0$... why we need strict inequality? If not, then the argument holds for any $\Gamma_S^\dagger$?}
	Now, let $S_1' = S_1 \backslash \{j^*\},\; S_0' = S_1 \cup \{j^*\}$. We have
	% Since $S_1, S_0$ is an optimal partition, any perturbation of it makes the sum in \eqref{eq:inner:prod:identity} worse. \aaa{could remain the same} Therefore,
	\begin{align*}
	\big| \alpha(\phi) - 2 |\phi_{j^*}| \big| = \Big|\sum_{j\in S_1'} |\phi_j| - \sum_{j\in S_0'} |\phi_j| \Big| \ge \alpha(\phi)
	\end{align*}
	where the inequality is by the optimality of $\{S_0,S_1\}$ partition. Since $|\phi_{j*}| > 0$, this implies $\alpha(\phi) \leq |\phi_{j^*}|$.
	%	\begin{align*}
	%	\alpha(\phi) - 2 |\phi_{j^*}| = \sum_{j\in S_1'} |\phi_j| - \sum_{j\in S_0'} |\phi_j| \leq -\alpha(\phi)
	%	\end{align*}
	%	As a result, $\alpha(\phi) \leq |\phi_{j^*}|$.
	%
	Combining with~\eqref{eq:temp:phi:j:star:upper}, we have % the lower and the upper bound we obtained for $|\phi_{j^*}|$, we have 
	%\begin{align}\label{eq:temp:bound:1}
	$\alpha(\phi) \leq c$.
	%\end{align}
	
	For any $1 \leq \gamma \leq R$, define $\beta^{(\gamma)} \in \Gamma_{S,R}$ such that $\beta^{(\gamma)}_j = \sign(\phi_j) \big[ 1\{j\in S_1\} -\gamma\; 1\{j\in S_0\} \big]$, for all $j\in S$. Then,
	\begin{align*}
	\alpha_R(\phi) \leq  \min_{1\leq \gamma \leq R} |\ip{\phi, \beta^{(\gamma)}}| 
	= \max \Big\{0,\; \sum_{j\in S_1} |\phi_j| - R \sum_{j \in S_0} |\phi_j| \Big\}  
	\end{align*}
	where the equality follows since $\ip{\phi, \beta^{(\gamma)}}$ is a decreasing and continuous function of $\gamma$. 
	%\aaa{Did you mean to include $\sign(\phi_j)$ in the definition of $\beta_j^\gamma$?} 
	%On the other hand,
	But
	\begin{align*}
	\sum_{j\in S_1} |\phi_j| - R \sum_{j \in S_0} |\phi_j| = \alpha(\phi) - (R-1)\;c \leq  (2-R)\;\alpha(\phi)
	\end{align*}
	where we have used $\alpha(\phi) \leq c$. The proof is complete.
	%	\aaa{$\gamma$ in the above should be $R$.}
	%	We used \eqref{eq:temp:bound:1} in the last inequality.
\end{proof}

\subsection{Proof of Theorem~\ref{thm:sample:MR}}
%\begin{proof}[]
	Let us write $\mri_S(\delta') = \mri_S(\delta';R)$ for simplicity. The condition for the sample MR to recover the support is $|\scor_j| > |\scor_k|$ for all $j \in S$ and $k \in S^c$. By triangle inequality, $|\scor_j| \ge |\pcor_j| - \infnorm{\scor - \pcor}$ and $|\scor_k| \le |\pcor_k| + \infnorm{\scor - \pcor}$. Thus sample MR is consistent if $|\pcor_j| > 2 \infnorm{\scor - \pcor} +  |\pcor_k|$ for all $j \in S$ and $k \in S^c$ which is equivalent to population MR consistency with slack $\delta := 2 \infnorm{\scor - \pcor}$. Theorem~\ref{thm:MR:main:res} thus gives the following sufficient condition
	\begin{align*}
	\Sigma \;\in\; \mri_S\Big(\frac{2 \infnorm{\scor - \pcor}}\rho\Big) \;\le \;
	\mri_S \Big( 2\big(\xi(\Sigma;\Gamma_S,t) + \sigma\big) 
	\frac1\rho \sqrt{   \frac{c_2 \log p}{n}} \;\Big)
	\end{align*}
	where the second inequality holds with probability $\ge 1-2p^{-c_1}$ by Lemma~\ref{lem:concent:scor} and the fact that $\mri_S(\delta'_1) \subset \mri_S(\delta'_2)$ whenever $\delta'_1 \le \delta'_2$.  
%\end{proof}

\subsection{Other proofs}\label{sec:other:proofs}
\begin{proof}[Lemma~\ref{lem:mr:incoh:SigSS:I}]
	Fix $j\in S,\; k\in S^c$. Assumption~\eqref{eq:mr:incoh:SigSS:I} implies $R \norm{ \Sigma_{Sk} }_1 < 1 -\delta' + (R-1) |\Sigma_{jk}|$ or equivalently
	%	\begin{align*}
	%	\| \Sigma_{Sk} \|_1 < \frac{1}{R}-\frac{\delta}{\rho R} + \frac{R-1}{R} |\Sigma_{jk}| 
	%	\end{align*}
	%	Multiply both sides by $R$ and subtract $R\; |\Sigma_{jk}|$,
	\begin{align*}
	R \sum_{j' \in S\backslash \{j\}} |\Sigma_{j'k}|  < 1 - \delta' - |\Sigma_{jk}|
	\end{align*}
	Replacing $- |\Sigma_{jk}|$ with $\pm \Sigma_{jk}$, adding $R (1 \pm \Sigma_{jk})$ to both sides, and using $\Sigma_{j'j} = 1\{j' = j\}$,
	\begin{align*}
	R \| \Sigma_{Sj} \pm \Sigma_{Sk} \|_1  < (R+1)(1 \pm \Sigma_{jk}) -\delta' ,
	\end{align*}
	which gives the desired result after some algebra. The only if part holds as the previous argument is reversible. 
	%	Now, move $\delta/\rho$ to LHS and divide sides by $R+1$.
\end{proof}

\begin{proof}[Lemma~\ref{lem:mr:incoh:R:infinity}]
Fix $\delta' \geq 0$. Assume $\Sigma \in \mri_S(\delta';R)$, for all $R < \infty$, or equivalently
\begin{align*}
R \sum_{j'\in S\backslash \{j\}} |\Sigma_{j'j} \pm \Sigma_{j'k} | + \delta' < \Sigma_{jj}\pm \Sigma_{jk},\quad \forall j\in S,\; k\in S^c, R\in [1, \infty).
\end{align*}
The feasibility of these conditions for every $R\in [1, \infty)$ requires that
\begin{align*}
\sum_{j'\in S\backslash \{j\}} |\Sigma_{j'j} \pm \Sigma_{j'k} | = 0,\quad \forall j\in S,\; k\in S^c
\end{align*}
which implies that $\Sigma_{j'k}=\Sigma_{j'j}=0$ for all $j,j'\in S,\; k\in S^c$  that $j\not= j'$. Therefore, $\Sigma_{S^cS}=0$ and $\Sigma_{SS}=I$. Moreover, $\delta'$ must be less than one. This proves the desired result.
\end{proof}

\begin{proof}[Proposition~\ref{prop:small:eigenval}]
	Let $\mu$ be a unit norm eigenvector of $\Sigma_{SS}$ associated with eigenvalue $\lambda_s^2$. Then,
	$
	\var(\mu^T X_S) = \mu^T \Sigma_{SS}\, \mu = \lambda_s^2 
	$
	and as a result,
	$$
	| \mu^T \Sigma_{Si} | = |\text{cov}(\mu^T X_S, X_i)| \leq \big( \var(\mu^T X_S) \var(X_i) \,\big)^{1/2} \leq \lambda_s, \quad i\in [p]
	$$
	Pick $j = \text{argmax}_{j' \in S} |\mu_{j'}|$ and let $S' := S \setminus \{j\}$. For any $k\in S^c$,
	\begin{align*}
	\big|\mu_j \Sigma_{jj} \pm \mu_j \Sigma_{jk} \big|  - \big| \mu_{S'}^T \Sigma_{S'j} \pm \mu_{S'}^T \Sigma_{S'k} \big|
	\stackrel{(i)}{\le} \big| \mu^T \Sigma_{Sj}\big| + \big|\mu^T \Sigma_{Sk}\big| \le 2\lambda_s
	\end{align*}
	where $(i)$ is by the triangle inequality $|a\pm c| - |b\pm d| \leq |a+b| + |c+d|$.
	%	\begin{align*}
	%	|\mu_j \Sigma_{jj} \pm \mu_j \Sigma_{kj}| - \left|\sum_{j' \in S\backslash \{j\}} \mu_{j'} \Sigma_{j'j} \pm \sum_{j' \in S\backslash \{j\}} \mu_{j'} \Sigma_{j'k} \right| &\overset{(i)}{\leq} \left| \sum_{j' \in S} \mu_{j'} \Sigma_{j'j}\right| + \left| \sum_{j' \in S} \mu_{j'} \Sigma_{j'k} \right| \\
	%	&= | \mu^T \Sigma_{Sj} | + | \mu^T \Sigma_{Sk} | \\
	%	&\leq 2\lambda_s
	%	\end{align*}
	%	(i) is implied by the triangular inequality $|a\pm c| - |b\pm d| \leq |a+b| + |c+d|$. 
	Rearranging and noting that $|\mu_j| >  0$ (since $\mu \neq 0$), we have with $\nu_{S'} := \mu_{S'} / |\mu_j|$,
	\begin{align*}
	\big|\Sigma_{jj} \pm \Sigma_{jk} \big|  
	&\le \frac{2\lambda_s}{|\mu_j|} + \big| \ip{\nu_{S'}, \Sigma_{S'j} \pm  \Sigma_{S'k}} \big| \\
	&\le \frac{2\lambda_s}{|\mu_j|} + %\Big\|\frac{\mu_{S'}}{|\mu_j|} \Big\|_\infty 
	\infnorm{\nu_{S'}} \norm{ \Sigma_{S'j} \pm  \Sigma_{S'k}}_1 \le \frac{2\lambda_s}{|\mu_j|} + \norm{ \Sigma_{S'j} \pm  \Sigma_{S'k}}_1
	\end{align*}
	since $\infnorm{v_{S'}} \le 1$ which holds by the particular choice of $j$.
	%	
	%	\begin{align*}
	%	|\Sigma_{jj} \pm \Sigma_{jk}| &\leq \frac{2\lambda_s}{|\mu_j|} + \left|\sum_{j' \in S\backslash \{j\}} \Sigma_{j'j}\, \frac{\mu_{j'}}{\mu_j} \pm \sum_{j' \in S\backslash \{j\}} \Sigma_{j'k}\, \frac{\mu_{j'}}{\mu_j} \right| \\
	%	&\leq \frac{2\lambda_s}{|\mu_j|} + \sum_{j' \in S\backslash \{j\}} |\Sigma_{j'j} \pm \Sigma_{j'k}| \cdot |\frac{\mu_{j'}}{\mu_j}| \\
	%	&\overset{(i)}{\leq} \frac{2\lambda_s}{|\mu_j|} + \sum_{j' \in S\backslash \{j\}} |\Sigma_{j'j} \pm \Sigma_{j'k}|
	%	\end{align*}
	%	The particular choice of $j$ implies (i). 
	Adding $|\Sigma_{jj} \pm \Sigma_{jk}|$ and dividing by 2,
	$$
	|\Sigma_{jj} \pm \Sigma_{jk}| \leq \frac{\lambda_s}{\|\mu\|_\infty} + \frac{1}{2} \|\Sigma_{Sj} \pm \Sigma_{Sk}\|_1.
	$$
	Since $\norm{\mu}_2 = 1$, we have $\infnorm{\mu} \ge 1/\sqrt{s}$ which gives the desired inequality.
	%Moreover, $\|\mu \|_\infty = 1$ implies that $\| \mu \|_1 \geq 1/\sqrt{s}$.
	The last assertion of the proposition follows form  the inequality by noting that $R/(R+1)\ge 1/2$.
	%	 This proves the desired inequality, which contradicts with $\Sigma$ belonging to $\mri_S(\delta';R)$, for any $R \geq 1$.
\end{proof}

%\begin{proof}[Proposition~\ref{prop:singular:fail:MR}]
%	Singularity of $\Sigma_{SS}$ implies that there exists  $\lambda \in \reals^s \setminus\{0\}$ such that  $\lambda^T X_S = 0$ almost surely (recalling that $X$ is assumed to have zero mean). This implies that $\lambda^T \Sigma_{Sj}= 0$ for all $ j \in [p]$.
%	%	$$
%	%	\lambda^T X_S = 0,\quad a.s.
%	%	$$
%	%	Multiply it by $X_j$ or $X_k$ for some $j\in S,k\in S^c$ and take the expectation to show
%	%	\begin{align*}
%	%	\lambda^T \Sigma_{Sj}  &= 0,\quad \forall j\in S \\
%	%	\lambda^T \Sigma_{Sk}  &= 0,\quad \forall k\in S^c \\
%	%	\end{align*}
%	Take $j = \text{argmax}_{j' \in S} |\lambda_{j'}|$ and note that $|\lambda_j | > 0$. Now for any $k\in S^c$, we have 
%	\begin{align*}
%	\lambda^T (\Sigma_{Sj} + \Sigma_{Sk})  = 0, \quad \text{that is,} \quad  \lambda_j (\Sigma_{jj} + \Sigma_{jk}) = - \sum_{j'\in S\backslash \{j\}} \lambda_{j'} (\Sigma_{j'j} + \Sigma_{j'k}).
%	\end{align*}
%	Since $R \ge 1  \ge |\lambda_{j}'|/|\lambda_j|$, we have $|\Sigma_{jj} + \Sigma_{jk}| \leq R \sum_{j'\in S\backslash \{j\}} |\Sigma_{j'j} + \Sigma_{j'k}| $. Adding $R |\Sigma_{jj} + \Sigma_{jk}|$ to both sides we obtain
%	\begin{align*}
%	|\Sigma_{jj} + \Sigma_{jk}| \leq \frac{R}{1+R} \|\Sigma_{Sj} + \Sigma_{Sk}\|_1
%	\end{align*}
%	violating~\eqref{eq:MR:incoh}. The proof is complete.
%	%	&\implies \\
%	%	&\implies |\Sigma_{jj} + \Sigma_{jk}| \leq R \sum_{j'\in S\backslash \{j\}} |\Sigma_{j'j} + \Sigma_{j'k}| \\
%	%	&\implies  |\Sigma_{jj} + \Sigma_{jk}| \leq \frac{R}{1+R} \|\Sigma_{Sj} + \Sigma_{Sk}\|_1
%\end{proof}

\begin{proof}[Lemma~\ref{lem:concent:scor}]
	We have $Y \sim N(0,\beta^T \Sigma \beta + \sigma^2)$ and $X_j \sim N(0,1)$. It follows that $\sgnorm{Y} \lesssim \sqrt{\beta^T \Sigma \beta + \sigma^2} \le \sqrt{\beta^T \Sigma \beta} + \sigma$ and  $\sgnorm{X_j} \lesssim 1$. Then, by~\cite[Lemma 2.7.7]{vershynin2016high} $Y X_j$ is sub-exponential and
	\begin{align*}
	\senorm{Y X_j} \,\le\, \sgnorm{Y} \sgnorm{X_j} 
	\,\lesssim\, \sqrt{\beta^T \Sigma \beta} + \sigma 
	\,\le\,	\xi(\Sigma;\Gamma_S,t) + \sigma
	%\opnorm{\Sigma_{SS}}^{1/2} \norm{\beta}_2 + \sigma 
	\,:=\, \xi'
	\end{align*}
	using $\beta \in \Gamma_S$ and $\norm{\beta}_2 \le t$ by assumption and definition~\eqref{eq:zeta:def}.
	%	using $\supp(\beta) = S$ by assumption , so that
	%	\begin{align*}
	%		\beta^T \Sigma \beta = \beta_S^T \Sigma_{SS} \beta_S = \norm{\Sigma_{SS}^{1/2} \beta_S}_2^2 = \opnorm{\Sigma_{SS}^{1/2}}^2 \norm{\beta_S}_2^2 = \opnorm{\Sigma_{SS}} \norm{\beta}_2^2 .
	%	\end{align*}
	By centering~\cite[Exercise 2.7.10]{vershynin2016high}, we have the same bound, up to constants, for $\senorm{Y X_j - \ex[Y X_j]} \lesssim \senorm{Y X_j} \lesssim \xi'$.
	We note that 
	%\begin{align*}
	$\scor_j - \pcor_j = \frac1n \sum_{i=1}^n  y_i  x_{ij}- \ex[y_i x_{ij} ]$
	%\end{align*}
	which is an average of iid centered sub-exponential variables distributed as $Y X_j - \ex[Y X_j]$. Bernstein inequality for sub-exponential variables~\cite[Theorem 2.8.1]{vershynin2016high} implies that for any fixed $j \in [p]$,
	\begin{align*}
	\pr \big( |\scor_j - \pcor_j| \ge \xi' \tau \big) 
	\le 2 \exp \big[ {-c} \,n \min \big( \tau^2 ,  \tau \big) \big] ,\quad \tau \ge 0.
	\end{align*}
	Applying union bound over $j=1,\dots,p$, 
	\begin{align*}
	\pr \big( \infnorm{\scor- \pcor} \ge \xi' \tau \big) 
	\le 2 p\exp \big[ {-c} \,n \min \big( \tau^2 ,  \tau \big) \big] ,\quad t \ge 0.
	\end{align*}
	Taking $\tau = \sqrt{(1+c_1) \log p / (cn)} \le 1$, where the inequality holds by the assumption that $\log p /n$ is sufficiently small, we obtain the result.
\end{proof}

\begin{proof}[Lemma~\ref{lem:a:b:lambda}]
	The result follows from the following identity
	\begin{align}\label{eq:a:b:lam:identity}
	\inf_{\lambda \,\in\, [-1,1]} |a - \lambda b| = \big( |a| - |b| \big)_+  \quad \forall a,b \in \reals,
	\end{align}
	where $x_+ := \max(x,0) = x 1\{x > 0\}$ is the positive part of $x$. To see  identity~\eqref{eq:a:b:lam:identity} note that it holds trivially for $b=0$. Now, assume $b \neq 0$, and let $\alpha = a/b$. Then,
	\begin{align*}
	\inf_{\lambda \,\in\, [-1,1]} |\alpha - \lambda | =  \inf_{\lambda \,\in\, [-1,1]} ||\alpha| - \lambda |  = 
	\begin{cases}
	0 & |\alpha | \le 1 \\
	|\alpha| - 1 & |\alpha | >1
	\end{cases}.
	\end{align*}
	Multiplying by $|b|$ gives~\eqref{eq:a:b:lam:identity}.
\end{proof}

%%%%%%%%%%%%%%%%%%%%%%%%%%%%%%%%%%5

\begin{proof}[Lemma~\ref{lem:Gamma:S:identify}]
	First, we prove $\Gamma_S' \subseteq \Gamma_S^\dagger$. Take any $\beta \in \Gamma_S$ and $\phi \in \Gamma_S'$.
	%assume $\phi\in \reals^p$ has this property:
	%$$
	%\|\phi_S\|_\infty > \frac{\delta}{\rho (1+R)}+ \frac{R}{1+R} \|\phi_S\|_1
	%$$
	Let $j \in \text{argmax}_{j'\in S}\; |\phi_{j'}|$. Then,
	\begin{align*}
	|\ip{\phi, \beta}| &= |\ip{\phi_S, \beta_S}| \\
	&\geq |\phi_{j}|\; |\beta_{j}| - \big| \sum_{j'\in S\backslash \{j\}} \phi_{j'}\;\beta_{j'}\big| \\
	&\geq |\phi_{j}|\; |\beta_{j}| - \sum_{j'\in S\backslash \{j\}} |\phi_{j'}|\; |\beta_{j'}| \\
	&\geq \|\phi_S\|_\infty \; \min_{j\in S} |\beta_j| - (\|\phi_S\|_1 - \|\phi_S\|_\infty)\; \|\beta_S \|_\infty \\
	&\overset{(i)}{\geq} \|\phi_S\|_\infty \; \min_{j\in S} |\beta_j| - R\;(\|\phi_S\|_1 - \|\phi_S\|_\infty)\; \min_{j\in S} |\beta_j| \\
	&\overset{(ii)}{>} \frac{\delta}{\rho} \; \min_{j\in S} |\beta_j|  \\
	&\overset{(iii)}{\geq} \delta,
	\end{align*}
	where (i) and (iii) are implied by $\beta \in \Gamma_S$ and (ii) by $\phi \in \Gamma_S'$. Notice that the second to last inequality is strict.
	
	In order to prove the second part of lemma, fix some $\phi \not\in \Gamma_S' \cup \Gamma_S''$. We show that $\phi \not \in \Gamma_S^\dagger$, by constructing some $\beta \in \Gamma_S$ such that $|\ip{\phi, \beta}| \leq \delta$. Let $\delta' = \delta/ \rho$. With some algebra, we have
	\begin{align*}
	R^{-1}( \|\phi_S\|_\infty	- \delta')
		\; \leq\; \|\phi_S\|_1 - \|\phi_S\|_\infty \;\leq\;
		 R \,\|\phi_S\|_\infty + \delta'.
	\end{align*}
	Let us define the following mutually exclusive intervals:
	\begin{align*}
	I_1 &= \big[ R^{-1} (\|\phi_S\|_\infty - \delta'),\; R^{-1} \|\phi_S\|_\infty \big], 
	\quad I_2 = \big( R^{-1} \|\phi_S\|_\infty,\; \|\phi_S\|_\infty \big] \\
	I_3 &= \big( \|\phi_S\|_\infty,\; R\; \|\phi_S\|_\infty \big] 
	\quad I_4 = \big( R\; \|\phi_S\|_\infty,\; R\; \|\phi_S\|_\infty + \delta' \big].
	\end{align*}
	We consider four cases corresponding to $\|\phi_S\|_1 - \|\phi_S\|_\infty \in I_i$, for  $i=1,2,3,4$. We construct $\beta^1,\beta^2,\beta^3,\beta^4 \in \reals^p$ such that whenever $\|\phi_S\|_1 - \|\phi_S\|_\infty \in I_i$ then $\beta^i\in \Gamma_S$ and $|\ip{\phi, \beta^i}| \leq \delta'$. 
	Let us proceed with the construction.
	We set $\beta^i_{S^c}=\bar{0}_{p-s}$ for all $i=1,2,3,4$ and let $j \in \text{argmax}_{j'\in S}\; |\phi_{j'}|$. Define $\beta^i_j$ as follows:
	\begin{align*}
	\beta^i_j =  \sign(\phi_j)\; \left\{ \begin{array}{ll}
	1 & : i=1 \\
	\|\phi_S\|_\infty / (\|\phi_S\|_1 - \|\phi_S\|_\infty) & :i=2 \\
	-1 & : i=3 \\
	-R & : i=4
	\end{array} \right.
	\end{align*} 
	and for $j'\in S\backslash \{j\}$,  let
	\begin{align*}
	\beta^i_{j'} =  \sign(\phi_{j'})\; \left\{ \begin{array}{ll}
	-R & : i=1 \\
	-1 & :i=2 \\
	(\|\phi_S\|_1 - \|\phi_S\|_\infty) / \|\phi_S\|_\infty  & : i=3 \\
	1 & : i=4
	\end{array} \right.
	\end{align*}
	We leave it to the reader to verify that for $i=1,4$ we have $0 \leq \ip{\phi,\beta^i} \leq \delta'$ and for $i= 2, 3$, we have $\ip{\phi,\beta^i}=0$. Furthermore, one should check that $\beta^2, \beta^3 \in \Gamma_S$.% and in case 3, $\beta^3 \in \Gamma_S$.
\end{proof}
\begin{rem}\label{rem:mp:truth:proof}
The proof is by induction. Suppose the selected covariates after some iterations are $X_j,\, j\in \widehat{S}$ and $\widehat{S} \subset S$. Also assume the residual is 
\begin{align*}
R = Y - \sum_{j\in \widehat{S}} \gamma_j X_j
\end{align*}
We don't make any assumption about the origin of $\gamma_j$ so the rest of proof works for MP, OMP, and forward-stagewise regression. Denote $\gamma = (\gamma_j)_{j\in\widehat{S}}$. Now let us compute the covariance of $R$ and each covariate, $X_j,\, j\in [p]$:
\begin{align*}
\cov(R, X_j) &= \cov(Y,X_j) - \sum_{j'\in \widehat{S}} \gamma_{j'} \cov(X_{j'}, X_j) \\
&= \ip{\Sigma_{*j}, \beta} - \ip{\Sigma_{\widehat{S}j}, \gamma} \\
&= \ip{\Sigma_{*j}, \beta - \bar{\gamma}}
\end{align*}
where $\bar{\gamma}\in \R^p$ satisfies $\bar{\gamma}_{\widehat{S}} = \gamma$ and $\bar{\gamma}_{\widehat{S}^c}=0$. Since $\beta-\bar{\gamma}$ has support $S$ and it is non-zero (unless the residual is zero and we stop the iteration), the condition~\ref{truth:mp:infinity} guarantees the next selection would be also from the support.
\end{rem}
\begin{proof}[Lemma~\ref{lem:lasso:truth:incoh}]
If $\Sigma_{SS}$ is singular, neither of~\ref{truth:mp:infinity} or Lasso incoherence holds. Therefore assume $\Sigma_{SS}$ is non-singular,
\begin{align*}
\infnorm{\Sigma_{S^cS} \beta} < \infnorm{\Sigma_{SS} \beta},\quad \forall \beta \in \R^s \{0\} &\iff \infnorm{\Sigma_{S^cS} \Sigma_{SS}^{-1} \beta} < \infnorm{\Sigma_{SS} \Sigma_{SS}^{-1} \beta},\quad \forall \beta \in \R^s \{0\} \\
&\iff \opinfnorm{\Sigma_{S^cS}\Sigma_{SS}^{-1}} < 1
\end{align*}
The first equivalence holds since the linear operator defined by $\Sigma_{SS}$ is a bijection on $\R^s\backslash \{0\}$ and the second equivalence holds by definition of the operator norm $\opinfnorm{}$.
\end{proof}
%

%%%%%%%%%%%%%%%%%%%%%%%%%%%%%%%%
\begin{proof}[Proposition~\ref{prop:lasso:pop}]
	Let us write $\Sigma_{xy} = \ex[XY]$ and $\Sigma_{xx} = \Sigma = \ex[XX^T]$.
	The Lasso solution can be written as
	\begin{align*}
	\bett \in \argmin_{\beta} \frac12 \beta^T \Sigma_{xx} \beta - \beta^T \Sigma_{xy} + \lambda \norm{\beta}_1
	\end{align*}
	The optimality conditions are obtained by requiring that zero belongs to the subdifferential of the objective, i.e.,  $\Sigma_{xx} \bett - \Sigma_{xy} + \lambda u$ where $ u \in \partial \norm{\bett}_1$. Alternatively, $u = \sign(\bett)$ where $\sign$ should be interpreted as a generalized sign vector (i.e., for the scalar version $\sign(x) \in [-1,1]$ whenever $x = 0$).  Under model~\eqref{eq:lin:model:pop} with $\beta = \bets$, we have $\Sigma_{xy} = \Sigma \bets$. The optimality conditions are given by
	\begin{align}
	\Sigma(\bett - \bets) + \lambda u =0 , \quad u = \sign(\bett).
	\end{align}
	Let us write $\Delta =\bett - \bets$.  Consider part~(a) first: Assume that $\bett$ has the correct support, so that $\bett_{S^c} = 0$, and since $\bets_{S^c}$, we have $\Delta_{S^c} = 0$. Partitioning over $S$ and $S^c$, we have
	\begin{align} \label{eq:pop:lasso:optim}
	\begin{pmatrix}
	\Sigma_{SS}  & \Sigma_{SS^c} \\	\Sigma_{S^c S}  & \Sigma_{S^c S^c} 
	\end{pmatrix}
	\begin{pmatrix}
	\Delta_S \\ 0
	\end{pmatrix} + \lambda
	\begin{pmatrix}
	u_S \\ u_{S^c}
	\end{pmatrix} = 0
	\end{align}
	that is, $ \Delta_S = - \lambda \Sigma_{SS}^{-1} u_S$ and $\Sigma_{S^c S} \Delta_S + \lambda u_{S^c} = 0$. Substituting the expression for $\Delta_S$ from the first equation into the second, we obtain (using $\lambda > 0$)
	\begin{align}\label{eq:pop:lasso:uSc}
	u_{S^c} = \Sigma_{S^c S}  \Sigma_{SS}^{-1} u_S.
	\end{align} 
	Since $u_{S^c}$ is a generalized sign vector, we have $\infnorm{u_{S^c}} \le 1$. Assuming that $\bett_S$ has the correct sign, we get $u_S = \sign(\bett_S) = \sign(\bets_S)$. As $\bets_S$ varies in $\Bc_S$,  $\sign(\bets_S)$ takes all possible values in $\{\pm 1\}^s$. Thus, under the assumption of part~(a), we have
	\begin{align*}
	\sup_{u_S \,\in\, \{\pm 1\}^s}\infnorm{ \Sigma_{S^c S}  \Sigma_{SS}^{-1}\, u_S } \le 1.
	\end{align*}
	The LHS is equal to $\mnorm{\Sigma_{S^c S}  \Sigma_{SS}^{-1}}_\infty$ completing the proof of part~(a). (Note that the maximum of a convex function over a set is equal to its maximum over the convex hull of that set,  hence $\sup_{u_S \,\in\, \{\pm 1\}^s} f(u_S) = \sup_{u_S \in \ball_\infty} f(u_S)$ for any convex $f$.)
	
	\smallskip
	For part~(b), consider a candidate $\bett$ with $\bett_{S^c} = 0$ and $\bett_S = \bets_S - \lambda \Sigma_{SS}^{-1} u_S$. Choose $u_S$ such that it satisfies
	\begin{align*}
	u_S = \sign(\bets_S - \lambda \Sigma_{SS}^{-1} u_S)
	\end{align*}
	which always has a solution for sufficiently small $\lambda > 0$. In fact, for sufficiently small $\lambda$, we obtain $u_S = \sign(\bets_S)$. Define $u_{S^c}$ as in~\eqref{eq:pop:lasso:uSc}. This is a valid choice since $\infnorm{u_{S^c}} \le \mnorm{\Sigma_{S^c S}  \Sigma_{SS}^{-1}}_\infty \infnorm{u_S} < 1$ using the assumption of part~(b) and that $\infnorm{u_S} \le 1$. It follows that this dual vector is strictly feasible.  The constructed pair $(\bett,u)$ is then feasible and satisfies the optimality condition~\eqref{eq:pop:lasso:optim}, hence it is an optimal primal-dual pair; in addition strict dual feasibility implies uniqueness of the primal solution. Hence, constructed $\bett$ is the unique solution of the Lasso, with correct support and correct sign $\sign(\bett_S) = u_S = \sign(\bets_S)$.
\end{proof}

%%%%%%%% Comparison Proofs %%%%%%%%%%%%%%%%%%%%%%%%%%%%%%
\subsection{Comparison with other incoherence conditions}
\label{sec:comparison}
%\aaacmt{This mostly need to go to some Appendix. We should just mention MRI implies RIP result. BTW, I am not sure RIP necessarily is related to Lasso incoherence (?) Do we have a good understanding here. In any case, this can be discussed as part of the landscape business.}
%\aaa{Here we compare to pairwise incoherence and basis pursuit (aka restricted null space), RIP, etc. ... this could move around }

Let us recall some other incoherence conditions that are often considered when studying the \emph{basis pursuit}, a close relative of Lasso, which at population level solves the optimization problem:
\begin{align}\label{eq:BP:def}
	\min_{\beta'} \norm{\beta'}_1 \quad \text{subject to }\; \Sigma \beta' = \cov(X,Y). %\Sigma \bets.
\end{align}
The following  incoherence condition is well-known~\cite{HDS}:%: \aaa{cite HDS}[The definitions are borrowed with slight changes from HDS]
\begin{defn}\label{defn:PWI:def} Pairwise incoherence (PWI) parameter of the covariance matrix $\Sigma$ is defined as
\begin{align}\label{eq:PWI:def}
\dpw(\Sigma) := \max_{1\leq i \leq j \leq p}|\Sigma_{ij} - 1\{i = j\}|. %= \infnorm{\Sigma - I_p}.
\end{align}
\end{defn}

%\begin{defn} For  $s \in [p]$,  the covariance matrix $\Sigma$ satisfies a restricted isometry property (RIP) of order $s$ with constant $\delta_s(\Sigma) > 0$ if for all subsets $S$ of size at most $s$,
%	\begin{align*}
%	\opnorm{\Sigma_{SS} - I_s} \leq \delta_s(\Sigma).
%	\end{align*}
%	
%\end{defn}

It is known that if $\Sigma$ has pairwise incoherence  $\delta_\text{PW}(\Sigma) \leq 1/(3s)$,  then the basis pursuit problem~\eqref{eq:BP:def} recovers the (true) vector of parameters, $\beta$. %applied to noiseless setting recovers the vector of parameters, $\beta$.
%A weaker sufficient condition for the exact parameter recovery by basis pursuit is an RIP condition on $\Sigma$ of order $2s$, namely, $\delta_{2s}(\Sigma) \leq 1/3$.
%
Let us define 
\begin{align*}
	\Gamma_{(s)} := \bigcup_{S :\; |S|\, \le\, s} \Gamma_S, \quad \text{where $\Gamma_S$ is given in~\eqref{eq:Gamma:S:def}}.
\end{align*}
An incoherence condition holds over $\Gamma_{(s)}$ iff it holds over $\Gamma_S$ for all subset $S$ of size at most $s$.

We have the following result connecting PWI and MR incoherence:
\begin{prop}\label{prop:MR:PW}
Assume  $\diag(\Sigma) = 1_p$. Then, MR incoherence~\eqref{eq:MR:incoh} holds over $\Gamma_{(s)}$ if % $\Gamma_S$, for every $S$ of size at most $s$, if
\begin{align*}
\dpw(\Sigma) < \cfrac{1-\delta'}{2R(s-1)+1}.
\end{align*} 
\end{prop}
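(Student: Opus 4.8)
The plan is to verify the two inequalities in~\eqref{eq:MR:incoh} directly, for every subset $S$ with $|S|\le s$, every $j\in S$, every $k\in S^c$, and each choice of the $\pm$ sign, using only the crude facts that $\Sigma_{jj}=1$ and $|\Sigma_{ij}|\le \dpw(\Sigma)=:\mu$ for $i\neq j$. Since an incoherence condition holds over $\Gamma_{(s)}$ exactly when it holds over $\Gamma_S$ for all $|S|\le s$, this suffices.

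First I would isolate the diagonal term of the $\ell_1$-norm on the left-hand side of~\eqref{eq:MR:incoh}. Writing $A:=\Sigma_{jj}\pm\Sigma_{jk}$ and $B:=\sum_{i\in S\setminus\{j\}}|\Sigma_{ij}\pm\Sigma_{ik}|$, we have $\|\Sigma_{Sj}\pm\Sigma_{Sk}\|_1=|A|+B$. Because $\Sigma_{jj}=1$ and $|\Sigma_{jk}|\le\mu<1$ (the bound $\mu<1$ following from the hypothesis, as $\mu\le\mu(2R(s-1)+1)<1-\delta'\le1$ when $\delta'\ge0$), we get $A\ge 1-\mu>0$, so $|A|=A$. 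Substituting into~\eqref{eq:MR:incoh} and multiplying through by $1+R>0$, the inequality $\tfrac{R}{1+R}(A+B)+\tfrac{\delta'}{1+R}<A$ becomes $R(A+B)+\delta'<(1+R)A$, and cancelling $RA$ from both sides reduces it to the clean statement $RB+\delta'<A$.

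Next I would bound the two sides of $RB+\delta'<A$. Every $i\in S\setminus\{j\}$ satisfies $i\neq j$ and $i\neq k$ (the latter since $k\in S^c$), so $|\Sigma_{ij}\pm\Sigma_{ik}|\le|\Sigma_{ij}|+|\Sigma_{ik}|\le 2\mu$ by the triangle inequality; summing over the at most $s-1$ indices in $S\setminus\{j\}$ gives $B\le 2(s-1)\mu$. Together with $A\ge 1-\mu$, the reduced inequality $RB+\delta'<A$ holds as soon as $2R(s-1)\mu+\delta'<1-\mu$, which rearranges to $\mu\bigl(2R(s-1)+1\bigr)<1-\delta'$ — precisely the hypothesis. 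As $S$, $j$, $k$ and the sign were arbitrary, MR incoherence~\eqref{eq:MR:incoh} holds over $\Gamma_{(s)}$, as claimed.

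There is no serious obstacle here; the one point demanding a little care is the bookkeeping in the first step, namely recognizing that the entry $\Sigma_{jj}\pm\Sigma_{jk}$ sits identically inside the $\ell_1$-norm on the left of~\eqref{eq:MR:incoh} and alone on the right, so that it cancels and leaves a constraint on the off-diagonal mass only. Skipping this cancellation and bounding $\|\Sigma_{Sj}\pm\Sigma_{Sk}\|_1\le 1+(2s-1)\mu$ would still give a valid but weaker threshold $\mu<(1-\delta')/(2Rs+1)$; it is exactly the cancellation that replaces the denominator $2Rs+1$ by the sharper $2R(s-1)+1$.
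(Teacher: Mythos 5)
Your proof is correct and follows essentially the same route as the paper's: both isolate the diagonal entry $\Sigma_{jj}\pm\Sigma_{jk}$ (which is positive since $\Sigma_{jj}=1$ and $\dpw(\Sigma)<1$), bound the off-diagonal mass by $2(s-1)\dpw(\Sigma)$ via the triangle inequality, and reduce the MRI inequality to exactly the hypothesis $\dpw(\Sigma)\bigl(2R(s-1)+1\bigr)<1-\delta'$. The only difference is presentational—you first rewrite~\eqref{eq:MR:incoh} as $R\,B+\delta'<\Sigma_{jj}\pm\Sigma_{jk}$ and then bound, whereas the paper bounds first and reassembles the condition by adding the diagonal term and multiplying by $R/(R+1)$—so there is nothing to fix.
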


Proposition~\ref{prop:MR:PW} together with Proposition~\ref{prop:MR:RIP} stated in Section~\ref{sec:landscape}
relate two well-known conditions to the MR incoherence derived in Section~\ref{sec:MR:population}.
% The proofs are deferred to Appendix~\ref{sec:rem:proof}.
%
%\begin{prop}\label{prop:MR:RIP}
%Assume  $\diag(\Sigma) = 1_p$. If  MR incoherence~\eqref{eq:MR:incoh} holds over $\Gamma_{(s)}$, then %$\Gamma_S$, for every $S$ of size at most $s$, then
%\begin{align*}
%\delta_{2s}(\Sigma) < \frac{(1-\delta')}{R}\frac{2s-1}{s-1}.
%\end{align*} 
%\end{prop}
%
These two propositions roughly show that in terms of the strength, the MR incoherence is somewhere between the PW incoherence and the RIP, that is, PW incoherence implies a form of MR incoherence which in turn implies a form of RIP.

%\subsection{Proofs}
\begin{proof}[Proof of Proposition~\ref{prop:MR:PW}]
	Taking $j\in S$ and $k\in S^c$,
	\begin{align*}
	\sum_{j' \in S\backslash\{j\}} \big| \Sigma_{j'j} \pm \Sigma_{j'k} \big| &\leq 2(s-1)\; \delta_\text{PW}(\Sigma)\\
	&= 2(s-1)\;\delta_\text{PW}(\Sigma) +  \frac{1}{R}\; \delta_\text{PW}(\Sigma) -  \frac{1}{R}\; \delta_\text{PW}(\Sigma) \\
	& < \frac{1-\delta'}{R} -  \frac{1}{R}\; \delta_\text{PW}(\Sigma) \\
	& \leq \frac{1}{R} \; \big( \Sigma_{jj} \pm \Sigma_{jk}  \big) - \frac{\delta'}{R}
	\end{align*}
	where the first inequality is by assumption and the second uses $\Sigma_{jj}=1$. The MR incoherence~\eqref{eq:MR:incoh}  follows by adding $\big| \Sigma_{jj} \pm \Sigma_{jk}  \big| = \Sigma_{jj} \pm \Sigma_{jk}$ to both sides and multiplying by $R/(R+1)$.
\end{proof}

\begin{proof}[Proof of Proposition~\ref{prop:MR:RIP}]
	Pick some $\St \subset [p]$ with size $2s$. Choose a balanced partition, $S_0, S_1$, of $\St$, i.e. $|S_0| = |S_1|$. The MR incoherence condition holds for both $S=S_1$ and $S=S_2$:
	\begin{align}\label{eq:temp:cond:2}
	\|\Sigma_{S_ij} \pm \Sigma_{S_ik}\|_1 
	< \frac{1+R}{R} (1 \pm \Sigma_{jk}) - \frac{\delta'}{R} ,\quad j\in S_i,\; k\in S_{1-i},\; i \in \{0,1 \}.
	\end{align}
	%where $i\in\{0,1\}$. 
	Fix some $i \in \{0,1\}$, $j \in S_i$ and $k \in S_{1-i}$. %Next, we bound $\|\Sigma_{S_i j}\|_1$. 
	By the convexity of $\ell_1$ norm,
	\begin{align}\label{eq:Sigma:Si:j}
	\|\Sigma_{S_i j}\|_1 
	\;\leq\; \frac12 \big( \|\Sigma_{S_ij} + \Sigma_{S_ik}\|_1 + \|\Sigma_{S_ij} - \Sigma_{S_ik}\|_1 \big) 
	\;<\; \frac1R (1+R-\delta').
	\end{align}
	Let $\Delta = \Sigma - I_{p}$. Note that $\Delta_{S_i S_i} = \Sigma_{S_i S_i} - I_s$, hence 
	\begin{align}\label{eq:Delta:Si:j}
	\|\Delta_{S_i j}\|_1 = \norm{\Sigma_{S_i j}}_1 - 1 < (1-\delta')/R
	\end{align}
	by~\eqref{eq:Sigma:Si:j} and the assumption $\diag(\Sigma_{S_i S_i}) = 1_s$.  Let $e_j$ be the $j$th basis vector of $\reals^s$. Then, $\Delta_{S_ij} \pm \Delta_{S_ik} = \Sigma_{S_ij} \pm \Sigma_{S_ik} - e_j$, hence 
	\begin{align*}
	\norm{\Delta_{S_ij} \pm \Delta_{S_ik}}_1 
	&= |\Sigma_{jk}| + \sum_{j' \in S \setminus \{j\} } | \Sigma_{j'j} \pm \Sigma_{j'k}| \\
	&\le |\Sigma_{jk}| - (1 \pm \Sigma_{jk}) + \vnorm{\Sigma_{S_ij} \pm \Sigma_{S_ik}}_1 \\
	&< \frac{1}{R} (1 \pm \Delta_{jk}) + |\Delta_{jk}| - \frac{\delta'}{R}
	\end{align*}
	using $\Sigma_{jj} = 1$, \eqref{eq:temp:cond:2} and $\Sigma_{jk} = \Delta_{jk}$.
	% Meanwhile, \eqref{eq:temp:cond:2} implies
	%\begin{align*}
	%\|\Delta_{S_ij} \pm \Delta_{S_ik}\|_1 < \frac{1}{R} (1 \pm \Delta_{jk}) + |\Delta_{jk}| - \frac{\delta'}{R} ,\quad j\in S_i,\; k\in S_{1-i}
	%\end{align*}
	
	Using a convexity argument as in~\eqref{eq:Sigma:Si:j}, we obtain $\|\Delta_{S_ik}\|_1  < (1-\delta')/R +  |\Delta_{jk}|$. Taking the sum over $j \in S_i$ and rearranging, we have
	\begin{align}\label{eq:Delta:Si:k}
	\|\Delta_{S_i k}\|_1 <  \frac{s(1-\delta')}{R\;(s-1)}.
	\end{align}
	%%Next, we bound $\|\Delta_{S_i k}\|_1$:
	%\begin{align*}
	%\|\Delta_{S_ik}\|_1 %&\leq \big(\|\Delta_{S_i j} + \Delta_{S_i k}\|_1 + \|\Delta_{S_i j} - \Delta_{S_i k}\|_1 \big)/2 \\
	%	& <\frac{1-\delta'}{R} +  |\Delta_{jk}| 
	%\end{align*}
	%Take sum over all $j \in S_i$ and move $\|\Delta_{S_ik}\|_1$ to LHS:
	%\begin{align*}
	%\|\Delta_{S_i k}\|_1 <  \frac{s(1-\delta')}{R\;(s-1)}
	%\end{align*}
	Since~\eqref{eq:Delta:Si:j} and~\eqref{eq:Delta:Si:k} hold for any $j \in S_i$ and $k \in S_{1-i}$, we have shown that
	every column of $\Delta_{S_i S_i}$ and $\Delta_{S_i S_{1-i}}$, for $i=0,1$, has $\ell_1$ norm bounded by $(1-\delta')/R$ and $s(1-\delta')/R(s-1)$, respectively. It follows that
	\begin{align*}
	\opnorm{\Delta_{\St \St}} \le \mnorm{\Delta_{\St \St}}_1 = \max_{\ell \in \St} \norm{\Delta_{\St \ell}}_1 \le \frac{1-\delta'}{R} + \frac{s(1-\delta')}{R(s-1)}
	\end{align*}
	where the first inequality is well-known (the $\ell_1$ operator norm bounds the $\ell_2$ operator norm for symmetric matrices) and the equality as well: the $\ell_1$ operator norm is the maximum absolute column sum. Since $\St$ was an arbitrary subset of size $2s$, the proof is complete.
	%
	%We have proved that every column of $\Delta_{S_i S_i}$ and $\Delta_{S_i S_{1-i}}$ has $\ell$-1 norms, respectively, less than $(1-\delta')/R$ and $s(1-\delta')/R(s-1)$. Therefore, the $\ell$-1 operator norm of matrix $\Delta$\footnote{The $\ell$-1 operator norm is equal to the maximum absolute column sum.} is less than $(1-\delta')(2s-1)/R(s-1)$. This is also a bound for $\ell$-2 operator norm of $P$. (See [Horn])
\end{proof}

%\textbf{Corollary 3.} Adopting the notation in Corollary 2 these and further assume $c_0=c_1$ the following relationship between the performance of SIS and basis pursuit holds:
%\begin{align*}
%\text{Pairwise Incoherence} &\Longrightarrow \text{ SIS is consistent whenever } \max |\beta_i|< 1.5 \min |\beta_i| \\
%\text{ SIS is consistent whenever } \max |\beta_i|< 9\min |\beta_i|& \Longrightarrow \text{Restricted Isometry Property} \\ & \Longrightarrow \text{Restricted Null Property}
%\end{align*}

\end{document}